\theoremstyle{plain}
\def\endproof{\hspace*{\fill}\mbox{\ \rule{.1in}{.1in}}\medskip }
\newtheorem{theorem}{Theorem}[section]
\newtheorem{lemma}[theorem]{Lemma}
\theoremstyle{definition}
\newtheorem{example}[theorem]{Example}
\newtheorem{remark}[theorem]{Remark}
\numberwithin{equation}{section}
\numberwithin{figure}{section}
\begin{document}

\title[korn inequality in thin domains]
{The uniform Korn - Poincar\'e inequality \\ in thin domains\\
\bigskip
L'in\'egalit\'e de Korn - Poincar\'e \\ dans les domaines minces}
\author{Marta Lewicka and Stefan M\"uller}
\address{Marta Lewicka, Department of Mathematics, Rutgers University,
110 Frelinghuysen Rd., Piscataway, NJ 08854-8019, USA }
\address{Stefan M\"uller,  Hausdorff Center for Mathematics
 \& Institute for Applied Mathematics,
Bonn University,
Endenicher Allee 60,
D - 53115 Bonn, Germany}
\email{lewicka@math.rutgers.edu, stefan.mueller@hcm.uni-bonn.de}
\subjclass{74B05}
\keywords{Korn inequality, Killing vector fields, thin domains,
Poincar\'e inequality}

\maketitle

\tableofcontents

\begin{abstract} 
We study the Korn-Poincar\'e inequality:
\begin{equation*}
\|u\|_{W^{1,2}(S^h)} \leq C_h \|D(u)\|_{L^2(S^h)},
\end{equation*}
in domains $S^h$ that are shells of small thickness of order $h$, 
around an arbitrary compact, boundaryless and smooth
hypersurface $S$ in $\mathbf{R}^n$.
By $D(u)$ we denote the symmetric part of the gradient $\nabla u$,
and we assume the tangential boundary conditions:
\begin{equation*}
u\cdot\vec n^h = 0 \quad \mbox{ on } \partial S^h.
\end{equation*}
We prove that $C_h$ remains uniformly bounded as $h\to 0$, 
for vector fields $u$ in any family of
cones (with angle $<\pi/2$, uniform in $h$) around the orthogonal
complement of extensions of Killing vector fields on $S$.

We show that this condition is optimal, as in turn every Killing field
admits a family of extensions $u^h$, for which the ratio
$\|u^h\|_{W^{1,2}(S^h)} / \|D(u^h)\|_{L^2(S^h)}$
blows up as $h\to 0$, even if the domains $S^h$ are not rotationally symmetric.
\end{abstract}

\begin{abstract}
On \'etudie l'in\'egalit\'e de Korn-Poincar\'e: 
\begin{equation*}
\|u\|_{W^{1,2}(S^h)} \leq C_h \|D(u)\|_{L^2(S^h)},
\end{equation*}
dans les domaines $S^h$ de type des coques d'\'epaisseurs d'ordre $h$ autour 
d'une hypersurface compacte sans bord et reguli\`ere $S$ de $\mathbf{R}^n$. 
Par $D(u)$, on r\'ef\`ere \`a la partie sym\'etrique du gradient $\nabla u$ 
et on suppose la condition au bord:  
\begin{equation*}
u\cdot\vec n^h = 0 \quad \mbox{ on } \partial S^h.
\end{equation*} 
On d\'emontre que $C_h$ reste uniform\'ement born\'e car $h\to 0$, pour
tout champ de vecteurs dans une famille de c\^ones donn\'ee (faisant un angle    
$< \pi /2$, uniforme en $h$) autour du compl\'ement orthogonal des extensions
de champs de vecteurs de Killing sur $S$.

On montre que cette condition est optimale comme tout champ de Killling $u$ 
sur $S$ admet une famille d'extensions $u^h$ sur $S^h$ pour lesquelles le rapport 
$\|u^h\|_{W^{1,2}(S^h)} / \|D(u^h)\|_{L^2(S^h)}$ 
tend \`a l'infini comme $h\to 0$, m\^eme si les $S^h$ ne poss\`edent pas de 
symmetrie axiale.  
\end{abstract}

\section{Introduction} 

The objective of this paper is to study the Korn-Poincar\'e inequality:
\begin{equation}\label{intro_korn}
\|u\|_{W^{1,2}(S^h)} \leq C_h \|D(u)\|_{L^2(S^h)},
\end{equation}
under the tangential boundary conditions:
\begin{equation}\label{intro_tan}
u\cdot\vec n^h = 0 \quad \mbox{ on } \partial S^h,
\end{equation}
in domains $S^h$ that are shells of small thickness of order $h$, 
around an arbitrary compact, boundaryless and smooth
hypersurface $S$ in $\mathbf{R}^n$.
By $D(u)=\frac{1}{2}(\nabla u + (\nabla u)^T)$ 
we denote the symmetric part of the gradient $\nabla u$.

Korn's inequality was discovered in the early $XX$th century,
in the context of the boundary value problem of linear elastostatics 
\cite{Korn1, Korn2}.
There is by now an extensive literature on the subject, 
relating to various contexts and various boundary conditions 
(see for example a review \cite{Horgan}, and the references therein).
If (\ref{intro_tan}) is replaced by $u=0$ on $\partial S^h$, 
one can easily prove that 
$\|\nabla u\|_{L^2}\leq \sqrt{2}\|D(u)\|_{L^2}$, and so
(\ref{intro_korn}) follows by the Poincar\'e inequality.
In the absence of this boundary condition,
or with its weaker versions, the bound 
(\ref{intro_korn}) requires an extra assumption 
to eliminate pure rotations and translations,
when $D(u)=0$ but $\nabla u\neq 0$. In particular, 
(\ref{intro_korn}) holds for all $W^{1,2}(S^h)$ vector fields $u$
satisfying (\ref{intro_tan}), which are $L^2$- orthogonal
to the space of those linear fields on $S^h$ with skew-symmetric
gradient that are themselves tangent on the boundary.

We are interested in the behaviour of the constant $C_h$,
as $h\to 0$. It turns out that in general, $C_h$ may blow up,
even if $S^h$ are not rotationally symmetric (and so the aforementioned 
spaces are trivial).
The correct way of looking at this problem is to consider the asymptotic
inequality as $h\to 0$, i.e. the related Korn inequality on $S$
(see also \cite{JC}):
\begin{equation}\label{intro_due}
\|v\|_{W^{1,2}(S)} \leq C \|D(v)\|_{L^2(S)}.
\end{equation}
This inequality holds true for all tangent vector fields $v$ on $S$, which are 
$L^2$-orthogonal to the space of Killing fields on $S$.
A Killing field $v$ is defined to be a smooth tangent
vector field which generates a one-parameter family of isometries on $S$.
It is well known that the space of Killing fields on a given surface is a
finite dimensional Lie algebra. 
An equivalent characterisation is:
\begin{equation}\label{intro_tre}
D(v) = 0, \qquad \mbox{ i.e.: } \tau\nabla v(x) \tau  =0 
\quad \forall x\in S \quad \forall \tau \in T_x S.
\end{equation}

In this paper, we first notice that any $v$ satisfying (\ref{intro_tre})
admits a family of
extensions $v^h : S^h \to \mathbf{R}^n$, such that the boundary conditions
(\ref{intro_tan}) hold and so that the ratio
$\|v^h\|_{W^{1,2}(S^h)} / \|D(v^h)\|_{L^2 (S^h)}$ goes to infinity
as $h\to 0$.  
This construction turns out to be 
the worst case scenario for the possible blow-up of $C_h$.
Our main results state that the constants $C_h$
remain uniformly bounded for vector fields $u$ inside any family of
cones (with angle $<\pi/2$, uniform in $h$) around the orthogonal
complement of the space of extensions of all Killing fields on $S$.

\medskip

Our main motivation in this work has been its application
to dynamics of Navier-Stokes equations in thin $3$-dimensional domains.
Thin domains are encountered in many problems in solid or fluid mechanics.
For example, in ocean dynamics, one is dealing with
the fluid regions which are thin compared to the horizontal length scales.
Other examples include lubrication, meteorology, blood circulation etc.;
they are a part of a broader study of the behaviour of
various PDEs on thin $n$-dimensional domains, where $n\geq2$ (for
a review see \cite{Ra95}).

The study of the global existence and asymptotic properties of
solutions to the Navier-Stokes equations
in thin $3$d domains began with Raugel and Sell in \cite{RS93}.
They proved global existence of strong solutions for large initial
data and in presence of large forcing, for the sufficiently 
thin $3$d product domain $\Omega = Q \times (0, \epsilon)$, 
with the boundary conditions
either purely periodical or combined periodic-Dirichlet.
Further generalisations to other boundary conditions 
followed (see the references in \cite{irs}).
Towards analysing thin domains other than simple product domains,
Iftimie, Raugel and Sell \cite{irs} treated domains of the type:
$ \Omega = \{x\in {\mathbf R}^3; ~(x_1, x_2)\in Q,~
0 < x_3 < \epsilon g (x_1, x_2)\}$,
with the mixed boundary conditions:  periodic on the lateral boundary
and the Navier boundary conditions:
\begin{equation}\label{intro_navier}
D(u) \vec n^h || \vec n^h  \mbox{ and } u\cdot \vec n^h =0 \quad 
\mbox{ on } \partial S^h
\end{equation}
on the top and on the bottom.

%Towards analysing thin domains other than simple product domains,
%a first step was made by Temam and Ziane in \cite{irs44} for the case
%of the free boundary conditions and a spherical shell.

The Korn inequality arises naturally when one considers the incompressible
flow subject to (\ref{intro_navier}), for the following reason. 
In order to define the relevant Stokes operator
one uses the symmetric bilinear form $B(u,v)= \int D(u) : D(v)$
rather than the usual $\int \nabla u : \nabla v$. 
Hence the energy methods give suitable bounds for $\|D(u^h)\|_{L^2(S^h)}$,
for a solution flow $u^h$ in $S^h$. 
On the other hand, in order to establish compactness in the
limit problem as $h \to 0$, one needs bounds for
the $W^{1,2}$ norm of $u^h$, with constants independent of $h$. 
The inequality (\ref{intro_korn}) (with uniform constants $C_h$)  
provides thus a necessary uniform equivalence of the two norms
$\|u^h\|_{W^{1,2}}$ and $\|D(u^h)\|_{L^2}$ on $S^h$.

It is therefore hoped that we can apply the result of this paper 
to study the dynamics of the Navier-Stokes equations, 
under the Navier boundary conditions,
in thin shells with various geometries of the reference surface $S$ 
and of the boundaries of $S^h$.

\medskip

Starting with the original papers of Korn \cite{Korn1, Korn2}, 
Korn's inequality has also 
been widely used as a basic tool for the existence of solutions of
the linearised  dis\-pla\-ce\-ment-traction equations in elasticity
\cite{F, ciarbookvol1, Horgan}.
In this context, for a given displacement vector field $u$, the matrix
field $D(u)$ is the linearised strain, which measures the pointwise
deviation of the deformation $Id +\epsilon u$  from a rigid motion,
up to the first order terms in $\epsilon$.
Hence, Korn's inequality can be interpreted as a rigidity estimate for
small displacement deformations: they are $W^{1,2}$ close to $Id$, by the error
given in the right hand side of (\ref{intro_korn}).  
A nonlinear version of this rigidity estimate, obtained recently 
in \cite{FJMgeo}, has been extensively applied to problems in nonlinear
elasticity and plate theories (see eg \cite{FJMgeo, FJMhier}).
%\marginpar{SM 25.7.07}
Earlier, Korn's inequalities in thin neighbourhoods of flat surfaces
have been discussed  in series of papers by Kohn and Vogelius
\cite{KV2}. They derive an estimate which degenerates as $h \to 0$
for clamped boundary conditions at the side of the plate. An analogous
result in our setting is given in Theorem \ref{th_very_weak}. 

\bigskip

\noindent{\bf Acknowledgments.} 
M.L. was partially supported by the NSF grants DMS-0707275 and DMS-0846996,
and by the Polish MN grant N N201 547438.

\section{The main theorems}

Let $S$ be a smooth, closed hypersurface (a compact boundaryless manifold 
of co-dimension $1$) in $\mathbf{R}^n$.
Consider a family $\{S^h\}_{h>0}$ of thin shells around $S$:
$$S^h = \{z=x + t\vec n(x); ~ x\in S, ~ -g_1^h(x) < t < g_2^h(x)\},$$
whose boundary is given by smooth positive functions 
$g_1^h, g_2^h: S\longrightarrow \mathbf{R}$.
We will use the following notation:
$\vec n^h$ for the outward unit normal to $\partial S^h$, $\vec n(x)$
for the outward unit normal to $S$ (seen as the boundary of some bounded domain in 
$\mathbf{R}^n$), $T_x S$ for the tangent space to $S$ at a given $x\in S$.
The projection onto $S$ along $\vec n$ will be denoted by $\pi$, so that,
for $h$ sufficiently small:
$$\pi(z) = x \qquad \forall z=x+t\vec n(x)\in S^h.$$

The standard Korn inequality (see Theorem \ref{korn_standard} in Appendix A)
on bounded Lipschitz domains implies that for each
$u\in W^{1,2}(S^h, \mathbf{R}^n)$ satisfying the orthogonality condition:
\begin{equation}\label{ort_lin}
\int_{S^h} u(z)\cdot (Az+b) ~\mbox{d}z = 0, \quad \forall A\in so(n), 
\quad\forall b\in\mathbf{R}^n
\end{equation} 
one has:
\begin{equation}\label{korn_nonunif}
\|u\|_{W^{1,2}(S^h)} \leq C_h \|D(u)\|_{L^2(S^h)}
\end{equation}
and the constant $C_h$ depends only on the domain $S^h$, but not on $u$.
Here, $so(n)$ stands for the linear space of all $n\times n$ skew-symmetric matrices:
$$so(n) = \{A\in M^{n\times n}; ~ A = -A^T\} = 
\{A\in M^{n\times n}; ~ \tau^T A\tau=0 \quad \forall\tau\in\mathbf{R}^n\}$$
while by $D(u)$ we mean the symmetric part of $\nabla u$:
$$D(u) = \frac{1}{2} \left(\nabla u + (\nabla u)^T\right).$$

The same result is true for $u$ satisfying additionally:
$$u\cdot \vec n^h=0 ~~~\mbox{ on }\partial S^h,$$
when in (\ref{ort_lin}) we take only linear fields
$Az+b\in\mathcal{R}_\partial(S^h)$;
with skew-symmetric gradient, and satisfying the same boundary condition as $u$:
$$\mathcal{R}_\partial(S^h)=\{w=Az+b; ~~ A\in so(n), b\in\mathbf{R}^n, w\cdot\vec n^h=0
\mbox{ on } \partial S^h\}.$$
The standard proof by contradiction (see Theorem \ref{naive_korn} in Appendix A) 
shows that the constant $C_h$ in (\ref{korn_nonunif}) again 
does not depend on $u$  but it may depend on the geometry
of $S^h$. In particular, as follows from the example in section \ref{example}, 
$C_h$ may converge to infinity 
as the thickness of $S^h$ (that is $\|g_1^h + g_2^h\|_{L^\infty(S)}$) 
converges to $0$.
Our goal is to investigate the behaviour of $C_h$ in two frameworks, relating to the following 
hypotheses:
\begin{itemize}
\item[{\bf (H1)}] For some positive constants $C_1, C_2$ and 
$C_3$, and all small $h>0$ there holds:
$$C_1 h\leq g_i^h(x)\leq C_2 h, \qquad |\nabla g_i^h(x)|\leq C_3 h \qquad  
\forall x\in S, \quad i=1,2. $$ 
\item[{\bf (H2)}] For some smooth positive functions $g_1, g_2:S\longrightarrow \mathbf{R}$, 
there holds:
$$\frac{1}{h} g_i^h \to g_i \quad {\mbox{ in }} \mathcal{C}^1(S) \quad {\mbox{ as }} h\to 0,
\qquad  i=1,2. $$ 
\end{itemize}
Clearly {\bf (H2)} implies {\bf (H1)} with:
$C_1 = 1/2\min \{g_i(x); ~x\in S, ~i=1,2\}$,
$C_2 = 2\max_i\|g_i\|_{L^\infty}$,
$C_3 = \max_i \|\nabla g_i\|_{L^\infty (S)} +1$.  

\medskip

Before stating our main results, we need to recall the notion of 
a Killing vector field. 
The Lie algebra of smooth Killing fields on $S$ will be denoted 
by $\mathcal{I}(S)$. 
That is, $v\in \mathcal{I}(S)$ if and only if:
\begin{itemize}
\item[(i)] $v:S\longrightarrow \mathbf{R}^n$ is smooth and $v(x)\in T_x(S)$ 
for every $x\in S$,
\item[(ii)] $\begin{displaystyle}\frac{\partial v}{\partial \tau} (x) \cdot \tau = 0
\end{displaystyle}$ for every $x\in S$ and every $\tau\in T_x S$.
\end{itemize}
Here $\partial v/\partial\tau(x)$ denotes the derivative of $v$ in the tangent 
direction $\tau$, i.e. if $\gamma: (-\epsilon,\epsilon) \longrightarrow S$ 
is a $\mathcal{C}^1$ curve with $\gamma(0) = x$ and $\gamma'(0) = \tau$,
then $\partial v/\partial \tau(x) = (v\circ \gamma)'(0)$.
Condition (ii) implies that
\begin{equation}\label{kill}
\frac{\partial v}{\partial \tau}(x) \cdot \eta 
+ \frac{\partial v}{\partial \eta}(x)\cdot \tau = 0 \qquad \forall \tau,\eta\in T_xS
\quad \forall x\in S.
\end{equation}
Recall that Killing vector fields are infinitesimal generators of isometries on $S$, 
in the sense that if $\Phi$ is the flow generated by $v$:
$$ \frac{\mbox{d}}{\mbox{d}s} \Phi (s,x) = v(\Phi(s,x)), \qquad
\Phi(0,x) = x,$$
then for every fixed $s$ the map $S\ni x\mapsto \Phi(s,x)\in S$ is an isometry.
The linear space $\mathcal{I}(S)$ has finite dimension 
\cite{KN, Peterson}. Also, any Killing field of class $W^{1,2}$ is in fact smooth
(see Lemma \ref{weak_kil}); we recall these facts in Appendix C.

For $g_1, g_2:S\longrightarrow \mathbf{R}$, define the subspace of $\mathcal{I}(S)$:
$$\mathcal{I}_{g_1, g_2}(S) = \left\{v\in\mathcal{I}(S); 
~~ v(x)\cdot\nabla(g_1+g_2)(x) = 0
\quad \mbox{ for all } x\in S\right\},$$
formed of those Killing fields $v$ which satisfy 
$\lim_{h\to 0}h^{-1}v\cdot (\vec n^h_+ + \vec n^h_-) = 0$, where $\vec n^h_+$ and 
$\vec n^h_-$ denote, respectively, the outward normals to $S^h$ at its boundary points 
$x+ g_2^h(x)$ and $x-g_1^h(x)$.
 
\medskip

Our main results are the following:

\begin{theorem}\label{th1}
Assume {\bf (H1)} and let $\alpha\in [0,1)$.
Then, for all $h>0$ sufficiently small and all $u\in W^{1,2}(S^h, \mathbf{R}^n)$
satisfying one of the following tangency conditions:
$$ u\cdot \vec n^h=0 ~~~ \mbox{ on } 
\partial^+ S^h=\{x+g_2^h(x)\vec n(x); ~x\in S\},$$
or:
$$ u\cdot \vec n^h=0 ~~~ \mbox{ on } 
\partial^- S^h=\{x-g_1^h(x)\vec n(x); ~x\in S\},$$
together with:
\begin{equation}\label{f1}
\left|\int_{S^h} u(z) v(\pi(z)) ~\mathrm{d}z\right| 
\leq \alpha \|u\|_{L^2(S^h)} \cdot \|v\pi\|_{L^2(S^h)}
\qquad \forall v\in\mathcal{I}(S),
\end{equation}
there holds:
\begin{equation}\label{korn}
\|u\|_{W^{1,2}(S^h)} \leq C \|D(u)\|_{L^2(S^h)},
\end{equation}
where $C$ is independent of $u$ and of $h$.
\end{theorem}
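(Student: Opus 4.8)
The plan is to argue by contradiction, as in the standard proof of Korn's inequality, but to carry out the blow-up analysis carefully in the rescaled coordinates on $S$. Suppose the conclusion fails: then there exist sequences $h_k \to 0$ and $u_k \in W^{1,2}(S^{h_k},\mathbf{R}^n)$ satisfying the tangency condition on one of the two boundary components and the cone condition \eqref{f1} with the fixed $\alpha < 1$, such that $\|u_k\|_{W^{1,2}(S^{h_k})} = 1$ while $\|D(u_k)\|_{L^2(S^{h_k})} \to 0$. The first step is to pass to a common domain: using the diffeomorphism $z = x + t\vec n(x)$ and the rescaling $t = h_k s$, transplant each $u_k$ to a fixed reference shell $S^1$ (or $S \times (-g_1, g_2)$ under hypothesis \textbf{(H1)}, after a further normalization), obtaining rescaled fields $\tilde u_k$. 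One tracks how $\nabla u$ and $D(u)$ transform: the tangential derivatives are $O(1)$ while the normal derivative picks up a factor $1/h_k$, so that $\|D(u_k)\|_{L^2} \to 0$ forces, after rescaling, both the symmetrized tangential gradient and (crucially) the normal derivative $\partial_s \tilde u_k$ to be small in $L^2$. This is the mechanism by which the limit becomes $s$-independent, i.e. a vector field on $S$.

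The second step is compactness and identification of the limit. By the uniform bound $\|\tilde u_k\|_{W^{1,2}} \leq C$ we extract a weakly convergent subsequence $\tilde u_k \rightharpoonup u$ in $W^{1,2}$ and strongly in $L^2$; the smallness of $\partial_s \tilde u_k$ shows $u = u(x)$ depends only on the base point, so $u$ descends to a field $v$ on $S$, and the strong $L^2$ convergence upgrades to $\|\tilde u_k - v\pi\|_{L^2} \to 0$. Writing $v = v_{\mathrm{tan}} + \varphi \vec n$, the tangential boundary condition (on $\partial^+$ or $\partial^-$) passes to the limit and forces $\varphi = 0$ on $S$ — here one uses that the limiting boundary is the graph $x \mapsto x \pm g_i(x)\vec n(x)$ over \emph{all} of $S$, so the single boundary component already sees every base point. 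Thus $v$ is a genuine tangent vector field on $S$. Next, the vanishing of the symmetrized tangential gradient in the limit yields $\tau \nabla v(x)\tau = 0$ for all $x \in S$, $\tau \in T_xS$, i.e. $v \in \mathcal{I}(S)$ by \eqref{intro_tre}. Finally the cone condition \eqref{f1}, being stable under the $L^2$ convergence $\tilde u_k \to v\pi$, gives $\|v\pi\|_{L^2}^2 = \lim \int \tilde u_k \cdot v\pi \leq \alpha \|v\pi\|_{L^2}^2$; since $\alpha < 1$ this forces $v = 0$, hence $\tilde u_k \to 0$ in $L^2$.

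The third step is to derive the contradiction by promoting $L^2$-convergence to $W^{1,2}$-convergence. One applies the (non-uniform) Korn inequality, but in its sharpened form on the rescaled domain that controls $\|\tilde u_k - P_k\|_{W^{1,2}}$ by $\|D(\tilde u_k)\|_{L^2}$ where $P_k$ is a suitable infinitesimal rigid motion (or, more precisely, one uses a Korn inequality with the mean value of the rescaled gradient subtracted, together with Poincaré). Because $\tilde u_k \to 0$ strongly in $L^2$ and $\|D(\tilde u_k)\|_{L^2} \to 0$ (the rescaled symmetrized gradient is dominated, component by component, by the original $\|D(u_k)\|_{L^2} \to 0$ up to the geometric factors, which are uniformly bounded under \textbf{(H1)}), the rigid motions $P_k$ must also tend to zero, and we conclude $\tilde u_k \to 0$ in $W^{1,2}$, contradicting $\|\tilde u_k\|_{W^{1,2}} \sim 1$.

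I expect the main obstacle to be the bookkeeping in the first and third steps: making precise the claim that $\|D(u_k)\|_{L^2(S^{h_k})} \to 0$ controls \emph{both} the tangential symmetrized derivative and the full normal derivative of the rescaled field, uniformly in $k$, with all the metric and change-of-variables Jacobian factors (which involve $g_i^h$, $\nabla g_i^h$ and the shape operator of $S$) staying uniformly bounded and bounded below — this is exactly where \textbf{(H1)} enters, and where the uniformity of the constant $C$ in $h$ is actually won or lost. The passage of the pointwise identity $D(u_k)=0$ in the limit to the statement $v \in \mathcal{I}(S)$ requires a short computation relating the ambient symmetrized gradient restricted to tangent directions to the intrinsic Killing equation \eqref{kill}, using (ii) and the smoothness of $S$.
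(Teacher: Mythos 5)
There is a genuine gap, and it sits exactly where you predicted the difficulty would be: in the claim (Steps 1 and 3) that after the anisotropic rescaling $t=h_ks$ the symmetrized gradient of the rescaled field $\tilde u_k$ on the fixed domain is ``dominated, component by component'' by $\|D(u_k)\|_{L^2(S^{h_k})}$. This is false for the mixed normal--tangential components. Writing things schematically, the mixed entry of $D(u_k)$ is $\tfrac12(\partial_\tau u_k\cdot\vec n+\partial_{\vec n}u_k\cdot\tau)$, whereas the corresponding entry of the symmetrized gradient of $\tilde u_k$ is $\tfrac12(\partial_\tau u_k\cdot\vec n+h\,\partial_{\vec n}u_k\cdot\tau)$; the difference is of order $|\partial_{\vec n}u_k\cdot\tau|$, which is only bounded by $|\nabla u_k|$ and need not be small. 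The paper's own counterexample in section \ref{example} makes this concrete: for the extension $v^h$ of a Killing field one has $h^{-1/2}\|D(v^h)\|_{L^2(S^h)}=O(h)$ while the mixed components of the rescaled symmetric gradient (coming from $\partial_\tau v^h\cdot\vec n\approx-\Pi\tau\cdot v$) are $O(1)$. So the fixed-domain Korn inequality you invoke in Step 3 receives no small right-hand side, and the argument does not close. There is also a quantitative mismatch at the very end: even if you did get $\tilde u_k\to0$ in $W^{1,2}$ of the fixed domain, that only gives $\|\partial_{\vec n}u_k\|$ of order $o(h^{-1})$ relative to the rescaled normal derivative, while contradicting the normalization requires smallness of $h^{-1/2}\|\partial_{\vec n}u_k\|_{L^2(S^h)}$, i.e.\ a rate $o(h)$ for $\partial_s\tilde u_k$ in the fixed domain; a qualitative compactness statement cannot deliver this. (A smaller issue: with the unscaled normalization $\|u_k\|_{W^{1,2}(S^{h_k})}=1$ the rescaled family is not bounded in $W^{1,2}$ of the fixed domain -- its $L^2$ norm may grow like $h^{-1/2}$ -- so the compactness in Step 2 needs the $h^{-1/2}$-scaled normalization (\ref{contra}) used in the paper; that part is repairable.)

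Your Step 2 (averaging out the thin direction, identifying a tangent limit, showing it is Killing, and killing it with the angle condition (\ref{f1})) is in the same spirit as the paper's treatment of $\bar u^h_{tan}$ via the surface Korn inequality, and is essentially sound. What your scheme is missing is any mechanism to control the components of $\nabla u_k$ of type $\partial_{\vec n}u_k\cdot\tau$ (equivalently $R\vec n$) once the $L^2$ norm of $u_k$ is known to be small. This is precisely the hard core of the paper's proof: Theorem \ref{approx} constructs a smooth $so(n)$-valued field $R$ with $\|\nabla u-R\pi\|_{L^2(S^h)}\le C\|D(u)\|_{L^2(S^h)}$ and, crucially, $\|\nabla R\|_{L^2(S)}\le Ch^{-3/2}\|D(u)\|_{L^2(S^h)}$, and the key Lemma \ref{lem3} uses an integration by parts on $S$ (together with Lemmas \ref{lem1} and \ref{lem2}) to bound $\|\nabla(\bar u\cdot\vec n)\|_{L^2(S)}+\|R\vec n\|_{L^2(S)}$ by quantities that become small once $\|\bar u\|_{L^2(S)}$ is small. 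No analogue of these two ingredients appears in your proposal, and without them the passage from $L^2$-smallness to $W^{1,2}$-smallness of $u_k$ (at the correct $h$-rate) cannot be achieved by rescaling to a fixed domain.
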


\begin{theorem}\label{th2}
Assume {\bf (H2)} and let $\alpha\in [0,1)$.
Then for all $h>0$ sufficiently small and all $u\in W^{1,2}(S^h, \mathbf{R}^n)$
satisfying $ u\cdot \vec n^h=0$ on $\partial S^h$
and:
\begin{equation}\label{f2}
\left|\int_{S^h} u(z) v(\pi(z)) ~\mathrm{d}z\right| 
\leq \alpha \|u\|_{L^2(S^h)} \cdot \|v\pi\|_{L^2(S^h)}
\qquad \forall v\in\mathcal{I}_{g_1, g_2}(S),
\end{equation}
there holds (\ref{korn})
with $C$ independent of $u$ and of $h$.
\end{theorem}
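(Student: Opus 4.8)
The plan is to argue by contradiction, in the spirit of the compactness proof of the standard Korn inequality (Theorem~\ref{naive_korn}), but carried out on the rescaled fixed domain rather than on $S^h$ itself. Suppose the conclusion fails: then there exist $h_k\to 0$ and $u_k\in W^{1,2}(S^{h_k},\mathbf{R}^n)$ with $u_k\cdot\vec n^{h_k}=0$ on $\partial S^{h_k}$, satisfying the cone condition \eqref{f2} with the fixed $\alpha<1$, and normalised by $\|u_k\|_{W^{1,2}(S^{h_k})}=1$, yet $\|D(u_k)\|_{L^2(S^{h_k})}\to 0$. The first step is to transplant everything to the reference domain. Using the diffeomorphism $(x,t)\mapsto x+t\vec n(x)$ and the rescaling $t=h_k s$, I would pull $u_k$ back to a vector field $w_k$ defined on a fixed domain $\Omega=\{(x,s):x\in S,\ -g_1(x)<s<g_2(x)\}$ (here \textbf{(H2)} guarantees that the rescaled shells converge to $\Omega$ in $\mathcal{C}^1$, so the metric coefficients, the Jacobians, and the unit normal $\vec n^{h_k}$ all converge uniformly). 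In these coordinates $\|D(u_k)\|_{L^2(S^{h_k})}^2 = h_k\int_\Omega (\text{rescaled strain})$, and the key observation — exactly as in the classical thin-domain Korn analysis — is that the rescaled strain contains the terms $\tfrac1{h_k}\partial_s w_k^{\mathrm{tan}}$ and $\tfrac1{h_k}\partial_s w_k^{\mathrm{nor}}$ with large prefactors, so that $D(u_k)\to 0$ forces $\partial_s w_k\to 0$ strongly; the limit $w$ is therefore $s$-independent, i.e. a (tangent plus normal) vector field on $S$.

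The second step is to identify the limit $w$ and the surviving differential constraints. After extracting a weakly convergent subsequence $w_k\rightharpoonup w$ in $W^{1,2}(\Omega)$, the vanishing of the tangential-tangential part of the strain in the limit gives precisely $\tau\,\nabla_{\mathrm{tan}} w^{\mathrm{tan}}\,\tau=0$, i.e.\ $w^{\mathrm{tan}}$ satisfies \eqref{intro_tre}, hence $w^{\mathrm{tan}}\in\mathcal{I}(S)$ is a Killing field; and the tangential-normal part of the strain, together with the boundary condition $u_k\cdot\vec n^{h_k}=0$ on both components of $\partial S^h$, pins down the normal component $w^{\mathrm{nor}}$ in terms of $w^{\mathrm{tan}}$ and the shape functions $g_1,g_2$ — in particular the compatibility of the two boundary conditions forces $w^{\mathrm{tan}}\cdot\nabla(g_1+g_2)=0$, so that in fact $w^{\mathrm{tan}}\in\mathcal{I}_{g_1,g_2}(S)$. (This is where the precise definition of $\mathcal{I}_{g_1,g_2}(S)$ and hypothesis \textbf{(H2)}, rather than merely \textbf{(H1)}, enter, and is the reason Theorem~\ref{th2} needs the stronger hypothesis: under \textbf{(H1)} alone the two faces of the shell may oscillate independently and the limiting normal component need not be well defined — which is exactly why Theorem~\ref{th1} only imposes tangency on one face.) Passing to the limit in the cone condition \eqref{f2}, and using that $w_k\to w$ strongly in $L^2$ (which follows once $\partial_s w_k\to 0$ strongly and the tangential directions are handled by Rellich), one gets $|\int w^{\mathrm{tan}}\cdot v|\le\alpha\|w\|\,\|v\|$ for every $v\in\mathcal{I}_{g_1,g_2}(S)$; choosing $v=w^{\mathrm{tan}}$ and using $\|w^{\mathrm{tan}}\|\le\|w\|$ forces $w^{\mathrm{tan}}=0$, whence $w^{\mathrm{nor}}=0$ as well, so $w\equiv 0$.

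The third step is to derive the contradiction with the normalisation. Having $w\equiv 0$ and $\partial_s w_k\to 0$ strongly, one needs to upgrade $w_k\rightharpoonup 0$ to $w_k\to 0$ strongly in $W^{1,2}(\Omega)$; equivalently one needs a uniform bound on the full rescaled gradient in terms of the rescaled strain. This is obtained by invoking the fixed-domain Korn inequality on $\Omega$ (the standard one, which applies since $\Omega$ is a fixed Lipschitz domain) applied to the divergence of $\partial_s w_k$: schematically, $\partial_s w_k$ is small in $L^2$ and its own symmetric gradient is controlled by $D(u_k)$, so Korn on $\Omega$ gives $\nabla_{\mathrm{tan}}\partial_s w_k$ small, and then a second application of Korn controls $\nabla_{\mathrm{tan}}w_k$ itself up to the finite-dimensional space of rigid motions, which is killed by $w\equiv 0$. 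Thus $\|w_k\|_{W^{1,2}(\Omega)}\to 0$, contradicting $\|w_k\|_{W^{1,2}(\Omega)}\approx\|u_k\|_{W^{1,2}(S^{h_k})}/\sqrt{h_k}\cdot\sqrt{h_k}=1$ (with the $h_k$ factors from the change of variables bookkept carefully).

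I expect the main obstacle to be the second step: controlling the normal component of the limit and extracting the constraint $w^{\mathrm{tan}}\in\mathcal{I}_{g_1,g_2}(S)$ from the two boundary conditions in a way that survives the weak limit — in particular making rigorous the claim that $\partial_s w_k\to 0$ \emph{strongly} (not merely weakly), since without strong convergence one cannot pass to the limit in the strain identities or in the $L^2$ norms appearing in the cone condition \eqref{f2}. Establishing that strong convergence is the technical heart of the argument and is presumably where the bulk of the paper's estimates (uniform in $h$) are spent; everything else is the standard contradiction/compactness scaffolding adapted to the thin geometry.
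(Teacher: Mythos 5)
Your overall scaffolding (contradiction, identification of a limiting Killing field, use of the cone condition \eqref{f2} to kill it, finite dimensionality of $\mathcal{I}(S)$, and the extraction of the constraint $w^{\mathrm{tan}}\cdot\nabla(g_1+g_2)=0$ from the two-sided boundary condition under \textbf{(H2)}) is in the same spirit as the paper's proof, which works with the normal average $\bar u$ and Lemma~\ref{ass_h2} rather than with a rescaled fixed domain. But your third step contains a genuine gap, and it is exactly the step that carries the whole difficulty. After you know $w_k\to 0$ strongly in $L^2(\Omega)$, you still must show that the \emph{weighted} rescaled gradient vanishes, i.e.\ $\|\nabla_{\mathrm{tan}}w_k\|_{L^2(\Omega)}\to 0$ and $h_k^{-1}\|\partial_s w_k\|_{L^2(\Omega)}\to 0$; note that what you actually get for free from the strain is only $\partial_s w_k=O(h_k)$, which is far weaker than the $o(h_k)$ you need. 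The mechanism you propose — applying the fixed-domain Korn inequality to $\partial_s w_k$ and claiming that ``its own symmetric gradient is controlled by $D(u_k)$'' — does not work: the symmetric gradient of $\partial_s w_k$ consists of second derivatives of $w_k$, which are not controlled in $L^2$ by the strain (only first derivatives of $u_k$ are available), and a soft application of Korn on the fixed domain $\Omega$ can only control the \emph{unweighted} gradient by the symmetric part of the \emph{unweighted} gradient, which is not the rescaled strain of $u_k$ (the latter carries the factors $h_k^{-1}\partial_s$). Concretely, from $\|w_k\|_{L^2}\to 0$ and the membrane part of the strain you can indeed get $\nabla_{\mathrm{tan}}w_k^{\mathrm{tan}}\to 0$ (by the surface Korn inequality fibered in $s$, plus finite dimensionality of $\mathcal{I}(S)$), but the remaining piece, $\nabla_{\mathrm{tan}}(w_k\cdot\vec n)$ — equivalently, via the shear component of the strain, $h_k^{-1}\partial_s w_k^{\mathrm{tan}}$ — involves derivatives of a quantity you only control in $L^2$, and no compactness or fixed-domain Korn argument produces it.

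This is precisely the point the paper spends its machinery on: Theorem~\ref{approx} constructs a smooth skew-symmetric field $R$ on $S$ with $\|\nabla u-R\pi\|_{L^2(S^h)}\le C\|D(u)\|_{L^2(S^h)}$ and the crucial extra bound $\|\nabla R\|_{L^2(S)}\le Ch^{-3/2}\|D(u)\|_{L^2(S^h)}$, and Lemma~\ref{lem3} then controls $\|\nabla(\bar u\cdot\vec n)\|_{L^2(S)}+\|R\vec n\|_{L^2(S)}$ by writing $\|a\|^2+\|b\|^2=\|a-b\|^2+2\langle a,b\rangle$ with $a=\nabla(\bar u\cdot\vec n)$, $b=\vec nR_{\mathrm{tan}}$, and estimating $\langle a,b\rangle$ by integrating by parts on $S$ so that the derivative lands on the smooth field $R$; the product $h^{-3/2}\|D(u)\|\cdot\|\bar u\cdot\vec n\|$ is then harmless under the contradiction normalisation $h^{-1/2}\|u^h\|_{W^{1,2}}=1$, $h^{-1/2}\|D(u^h)\|\to 0$. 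Without an ingredient of this kind (or some other genuinely quantitative, $h$-uniform anisotropic Korn estimate), your step three cannot be completed, so the proposal as it stands does not yield the theorem. A minor additional point: the limiting tangential-tangential constraint is $\mathrm{sym}\,\nabla_{\mathrm{tan}}w^{\mathrm{tan}}+(w\cdot\vec n)\Pi=0$ rather than the Killing equation itself; it reduces to the Killing equation only after you have shown $w\cdot\vec n=0$ from the boundary conditions, so the order of the arguments in your second step needs to reflect that.
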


The example constructed in section \ref{example} shows that conditions
(\ref{f1}) (or (\ref{f2})) are necessary for the bound (\ref{korn}). In particular,
any Killing field $v$ on $S$ generates a family of vector fields $v^h$ on $S^h$,
satisfying the boundary condition and such that
$\|\nabla v^h\|_{L^2(S^h)}^2 \geq Ch$ but $\|D(v^h)\|_{L^2(S^h)}^2\leq Ch^3$.
Hence, if one naively assumes that $u$ satisfies the angle condition 
only with the space of generators of appropriate rotations on $S$, rather 
than the whole $\mathcal{I}(S)$, the constant $C_h$ has a blow-up rate of at least 
$h^{-1}$, as $h\to 0$. The following theorem shows that this is the actual
blow-up rate, under the above mentioned conditions. 

More precisely, define:
$$\mathcal{R}(S) = \big\{v:S\longrightarrow\mathbf{R}^n; ~~ v(x) = Ax+b, ~
A\in so(n), ~b\in\mathbf{R}^n, ~ v\cdot \vec n=0 \mbox{ on } S\big\}
\subset \mathcal{I(S)},$$
$$\mathcal{R}_{g_1, g_2}(S) = \big\{v\in\mathcal{R}(S); 
~~ v(x)\cdot \nabla(g_1 + g_2)(x) = 0
\mbox{ for all } x\in S\big\}\subset \mathcal{I}_{g_1, g_2}(S).$$

\begin{theorem}\label{th_very_weak}
Let $\alpha\in [0,1)$.
Then, for all $h$ sufficiently small and all $u\in W^{1,2}(S^h, \mathbf{R}^n)$, 
there holds:
\begin{equation}\label{very_weak}
\|u\|_{W^{1,2}(S^h)} \leq Ch^{-1} \|D(u)\|_{L^2(S^h)},
\end{equation}
in any of the following situations:
\begin{itemize}
\item[(i)] {\bf (H1)} holds, $u\cdot\vec n^h=0$ on $\partial^+ S^h$
or $u\cdot\vec n^h=0$ on $\partial^- S^h$,  and:
$$\left|\int_{S^h} u(z) v(\pi(z)) ~\mathrm{d}z\right| 
\leq \alpha \|u\|_{L^2(S^h)} \cdot \|v\pi\|_{L^2(S^h)}
\qquad \forall v\in\mathcal{R}(S).$$
\item[(ii)]  {\bf (H2)} holds, $u\cdot\vec n^h=0$ on $\partial S^h$,   and:
$$\left|\int_{S^h} u(z) v(\pi(z)) ~\mathrm{d}z\right| 
\leq \alpha \|u\|_{L^2(S^h)} \cdot \|v\pi\|_{L^2(S^h)}
\qquad \forall v\in\mathcal{R}_{g_1, g_2}(S).$$
\end{itemize}
\end{theorem}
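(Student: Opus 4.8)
The plan is to deduce Theorem~\ref{th_very_weak} from Theorems~\ref{th1} and~\ref{th2}, exploiting the fact that $\mathcal{R}(S)\subset\mathcal{I}(S)$ (resp.\ $\mathcal{R}_{g_1,g_2}(S)\subset\mathcal{I}_{g_1,g_2}(S)$) is only a proper subspace, so the full hypothesis of Theorem~\ref{th1} may fail; we must therefore recover the $h^{-1}$-loss precisely from the part of $u$ that lies along the extended Killing directions not covered by $\mathcal{R}(S)$. I would work in situation~(i); situation~(ii) is entirely analogous with $\mathcal{I}_{g_1,g_2}(S)$ and $\mathcal{R}_{g_1,g_2}(S)$ replacing $\mathcal{I}(S)$ and $\mathcal{R}(S)$.

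\medskip

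First I would fix an $L^2(S^h)$-orthonormal basis of the finite-dimensional space $\{v\circ\pi : v\in\mathcal{I}(S)\}$ (its dimension is bounded independently of $h$ by Appendix~C), chosen so that the first few elements span the image of $\mathcal{R}(S)$. Writing $P^h$ for the $L^2(S^h)$-orthogonal projection onto this space and $Q^h$ for the projection onto the sub-span coming from $\mathcal{R}(S)$, the hypothesis in~(i) says $\|Q^h u\|_{L^2(S^h)}\leq\alpha'\|u\|_{L^2(S^h)}$ for some $\alpha'$ depending only on $\alpha$ (the cone condition with $\mathcal{R}(S)$ transfers, up to a fixed constant coming from norm-equivalence of $\|v\|_{L^2(S^h)}$ and $\|v\circ\pi\|_{L^2(S^h)}$ on this fixed finite-dimensional space, to a genuine projection bound). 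The vector field $w := u - P^h u + Q^h u$ then automatically satisfies the full cone condition~(\ref{f1}) with a constant $\alpha''<1$ independent of $h$: indeed $w$ differs from $u$ only by the components of $P^h u$ lying in the orthogonal complement of $\mathcal{R}(S)$ inside $\mathcal{I}(S)$, which are exactly killed. Applying Theorem~\ref{th1} to $w$ gives $\|w\|_{W^{1,2}(S^h)}\leq C\|D(w)\|_{L^2(S^h)}$ with $C$ uniform in $h$.

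\medskip

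It remains to control $u - w = P^h u - Q^h u$, i.e.\ the projection of $u$ onto the extended Killing directions orthogonal to $\mathcal{R}(S)$. This is where the $h^{-1}$ enters. By linearity it suffices to bound a single term $c\,(v\circ\pi)$ with $v\in\mathcal{I}(S)\setminus\mathcal{R}(S)$ and $c=\langle u, v\circ\pi\rangle_{L^2(S^h)}$. The key computation — precisely the one behind the example of section~\ref{example} — is that for such an extension one has $\|v\circ\pi\|_{W^{1,2}(S^h)}\leq C h^{1/2}$ while $\|D(v\circ\pi)\|_{L^2(S^h)}\geq c_0 h^{3/2}$ is \emph{small} but the relevant estimate runs the other way: $D(u)$ controls $D(v\circ\pi)$ only after dividing by $h$. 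Concretely, I would estimate $|c|\,\|D(v\circ\pi)\|_{L^2(S^h)}\leq \|D(u)\|_{L^2(S^h)}\|v\circ\pi\|_{L^2(S^h)} + (\text{cross terms with } w)$, and combine with the lower bound $\|D(v\circ\pi)\|_{L^2(S^h)}^2\geq c_0 h^2\|v\circ\pi\|_{L^2(S^h)}^2$ — a Korn-type inequality on the fixed finite-dimensional space, where $D$ of a Killing extension vanishes on $S$ but not in the normal direction, and the normal variation is $O(h)$ — to get $|c|\,\|v\circ\pi\|_{L^2(S^h)}\leq C h^{-1}\|D(u)\|_{L^2(S^h)}$, hence $\|u-w\|_{L^2(S^h)}\leq C h^{-1}\|D(u)\|_{L^2(S^h)}$; the $W^{1,2}$ bound on $u-w$ then follows since gradients of elements of the fixed space are comparable to their $L^2$ norms up to a dimensional constant (again by finite dimension and scaling). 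Adding the estimates for $w$ and $u-w$ and using $\|D(w)\|_{L^2(S^h)}\leq\|D(u)\|_{L^2(S^h)}+\|D(u-w)\|_{L^2(S^h)}\leq C h^{-1}\|D(u)\|_{L^2(S^h)}$ yields~(\ref{very_weak}).

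\medskip

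The main obstacle I anticipate is making the "Korn inequality on the finite-dimensional space of extended Killing fields with the $h^2$ weight" both correct and uniform: one must verify that for $v\in\mathcal{I}(S)$ the quantity $\|D(v\circ\pi)\|_{L^2(S^h)}$, although $o(\|v\circ\pi\|_{W^{1,2}(S^h)})$, is nonetheless bounded \emph{below} by $c_0 h\,\|v\circ\pi\|_{L^2(S^h)}$ uniformly, which requires that no nonzero extended Killing field has $D(v\circ\pi)$ decaying faster than $h$ — this is exactly the content, read quantitatively, of the blow-up example in section~\ref{example}, and it hinges on the second fundamental form of $S$ or, when $S$ is totally geodesic, on the $t$-dependence forced by the shell geometry. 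Once this lower bound and the routine norm-equivalences on the fixed finite-dimensional space are in hand, the argument assembles as above; situation~(ii) is handled verbatim with $g_1,g_2$-decorated spaces, invoking Theorem~\ref{th2} in place of Theorem~\ref{th1}.
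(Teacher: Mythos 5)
Your strategy breaks down at two specific points, and the second of these is the heart of the matter. First, the ``key computation'' you invoke mixes up the two kinds of extensions in the paper. For a trivial extension $v\pi$ of a Killing field, $D(v\pi)$ is of order $O(1)$ pointwise (see Remark \ref{rem_trivial_ext}: $\partial_\tau(v\pi)\cdot\vec n=-\Pi\,(\mathrm{Id}+t\Pi)^{-1}\tau\cdot v$), so $\|D(v\pi)\|_{L^2(S^h)}\sim h^{1/2}\sim\|v\pi\|_{L^2(S^h)}$; the $h^{3/2}$ smallness belongs to the \emph{smart} extensions $v^h$ of (\ref{3_def}), not to $v\pi$. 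More importantly, your central inequality $|c|\,\|D(v\pi)\|\le\|D(u)\|\,\|v\pi\|+\text{(cross terms)}$ is unsupported: $c=\langle u,v\pi\rangle/\|v\pi\|^2$ is an $L^2$ pairing of $u$ itself with $v\pi$, and there is no general mechanism (short of integration by parts plus the whole machinery of Theorem \ref{approx}) converting it into a bound by $\|D(u)\|$. Bounding that coefficient by $h^{-1}\|D(u)\|$ \emph{is} essentially the statement being proved, so the argument is circular at its decisive step; from $D(u)=D(w)+cD(v\pi)$ you can only get $\|u\|_{W^{1,2}}\le C\|D(u)\|+C\|u\|_{L^2}$, which is not (\ref{very_weak}).

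Second, the reduction to Theorem \ref{th1} does not go through for $w=u-P^hu+Q^hu$. The trivial extensions $v\pi$ are not tangent to $\partial^\pm S^h$ under {\bf (H1)} (only up to $O(h)$), so $w$ fails the exact boundary condition that Theorem \ref{th1} requires; and the cone condition (\ref{f1}) does not transfer with a uniform $\alpha''<1$: take $u=a\,v\pi+b\,r\pi$ with $v\in\mathcal{I}(S)$ orthogonal to the rotational part and $r\in\mathcal{R}(S)$, with $|b|$ at exactly the fraction allowed by your hypothesis; then $w$ is (up to the projection bookkeeping) purely in the span of $r\pi$, its angle with the Killing span is zero, and Theorem \ref{th1} is not applicable. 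For comparison, the paper's proof avoids Theorems \ref{th1}--\ref{th2} entirely: Lemma \ref{cor4.2} (built from Theorem \ref{approx}, whose $\nabla R$ bound is the sole source of the $h^{-1}$, together with the uniform Poincar\'e inequality, Theorem \ref{th_uniform_poincare}) produces a genuine rigid motion $A^hz+b^h$ with $\|u^h-(A^hz+b^h)\|_{W^{1,2}(S^h)}\le Ch^{-1}\|D(u^h)\|_{L^2(S^h)}$; a compactness argument then shows the limiting field $Ax+b$ lies in $\mathcal{R}(S)$ (resp.\ $\mathcal{R}_{g_1,g_2}(S)$, using the sharper trace estimate of Lemma \ref{trivial}(v) and the $h^{-3/2}$ normalisation), and the angle condition with these finite-dimensional spaces of \emph{rigid motions} -- which is all that is assumed here -- forces $A=0$, $b=0$, a contradiction. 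If you want to salvage your decomposition idea you would need a substitute for the unjustified coefficient bound, and that substitute is precisely the rigid-motion approximation of Lemma \ref{cor4.2}.
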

\noindent Notice that (i) is implied by the hypotheses of Theorem \ref{th1} and 
(ii) by the hypotheses of Theorem \ref{th2}, as the spaces $\mathcal{R}(S)$
and $\mathcal{R}_{g_1, g_2}(S)$
are contained in $\mathcal{I}(S)$ or $\mathcal{I}_{g_1,g_2}(S)$,
respectively. The bound (\ref{very_weak}) was obtained also in \cite{KV2}, 
but in a different context of thin plates with clamped boundary conditions 
and rapidly varying thickness.

\section{Remarks and an outline of proofs}

\begin{remark} 
Conditions (\ref{f1}) and (\ref{f2}) may be understood in the following way: 
the cosine of the angle (in $L^2(S^h)$) between $u$ and its projection onto 
the linear space $W^h\subset L^2(S^h)$ of `trivial' extensions 
$v\pi$ of certain Killing fields 
$v\in\mathcal{I}(S)$ (or $v\in\mathcal{I}_{g_1, g_2}(S)$) should be smaller than
$\alpha$. 

Equivalently,  one considers vector fields $u\in W^{1,2}(S^h)$, which
for a given constant $\beta\geq 1$ (related to $\alpha$ through:
$\beta = (1-\alpha^2)^{-1/2}$) satisfy:
\begin{equation}\label{dist_angle}
\|u\|_{L^2(S^h)} \leq \beta \|u - v\pi\|_{L^2(S^h)} \qquad 
\forall v\in\mathcal{I}(S) \quad \mbox{ (or } \forall v\in\mathcal{I}_{g_1, g_2}(S)
\mbox{)}.
\end{equation}
That is, the distance of $u$ from the space $W^h$ controls (uniformly) the full 
norm $\|u\|_{L^2(S^h)}$.

By Theorems \ref{th1} and \ref{th2}, inside each closed 
cone around $(W^h)^\perp$, of fixed angle $\theta<\pi/2$ in $L^2(S^h)$, the bound 
(\ref{korn}) holds, with a constant $C$, that is uniform in $u$ and $h$.
One could therefore think that $W^h$ is the kernel for the uniform 
Korn-Poincar\'e
inequality, in the same manner as the linear maps $Az+b$ with skew gradients
$A\in so(n)$ constitute the kernel for the standard Korn inequality 
(\ref{ort_lin}), (\ref{korn_nonunif}). 
This is not exactly the case, as the uniform Korn inequality is
true for the extensions $v\pi$ (see Remark \ref{rem_trivial_ext}).
The role of the kernel is played by the space $\widetilde{W}^h$
of 'smart' extensions $v^h$ of the Killing fields $v$ 
(see the formula (\ref{3_def})).

Still, with  $v\pi$ replaced by $v^h$ in (\ref{f1}) or (\ref{f2}), 
both Theorems \ref{th1} and \ref{th2} remain true.
This is because the spaces $W^h$ and $\widetilde{W}^h$ are asymptotically
tangent at $h=0$:
$$\|v\pi - v^h\|_{L^2(S^h)}\leq Ch \|v\pi\|_{L^2(S^h)} \qquad 
\forall v\in\mathcal{I}(S).$$
Hence, if $|\langle u,v^h\rangle_{L^2}|\leq \alpha 
\|u\|_{L^2}\cdot \|v^h\|_{L^2}$ for some $\alpha<1$,
then $|\langle u,v\pi\rangle_{L^2}|\leq (\alpha+Ch) 
\|u\|_{L^2}\cdot \|v\pi\|_{L^2}$, and the angle conditions in main theorems hold, 
for $h$ sufficiently small.
The fact that we chose to work with 'trivial' extensions, in $W^h$
(giving a simpler condition), instead of the real kernel $\widetilde{W}^h$,
is thus not restrictive.

In the particular case when $\partial S^h$ is parallel to $S$, 
say $g_i^h = h$, we have
$$\vec n^h(x + g_2(x)\vec n(x)) = \vec n(x), 
\qquad \vec n^h(x - g_1(x)\vec n(x)) = -\vec n(x),$$
$$\mathcal{I}(S) = \mathcal{I}_{g_1, g_2}(S).$$
If $w\in\mathcal{R}_\partial (S^h)$ then $w_{|S}$ is tangent to $S$ and, 
as shown in
Appendix A (Theorem \ref{lemma_rotations}) it generates a rotation on $S$.
Actually: $w = (w_{\mid S})^h\in\widetilde{W}^h$ and so 
by the preceding comment we see that 
the condition  (\ref{ort_lin}) is asymptotically contained 
in  (\ref{f1}) (or (\ref{f2})).
\end{remark}

\begin{remark}
A natural question is whether $\mathcal{I}(S)$ may contain other vector fields 
than the restrictions of generators of rigid motions on the whole $\mathbf{R}^n$.
This is clearly the case when $n=2$: any tangent vector field of constant length 
is a Killing field.
The same question for higher dimensions and even for $n=3$ and general (nonconvex)
hypersurfaces is open, to our knowledge. 
It is closely related to other open problems: whether the class of rotationally 
symmetric surfaces is closed under intrinsic isometries; or whether 
every intrinsic isometry on $S$ is actually a restriction of some 
isometry of $\mathbf{R}^3$. When $S$ is convex, it is well known that the last property 
holds, while for non-convex surfaces it does not. The answer to the same question,
formulated for $1$-parameter families of isometries is not known
(see \cite{Spivak} vol. 5).
\end{remark}

\bigskip

\noindent {\bf An outline of proofs of Theorems \ref{th1} and \ref{th2}.}

The general strategy is as follows. Suppose that $\|D(u)\|_{L^2(S^h)}$ is small.
It is natural to study the map $\bar{u}: S\longrightarrow\mathbf{R}^n$
which is obtained by averaging $u$ in the normal direction:
$$\bar{u}(x)  = \fint_{-g_1^h(x)}^{g_2^h(x)} u(x+t\vec n(x))~\mbox{d}t$$
(see e.g. \cite{Ra95, RS93, irs, Grisorods, Grisonone, Grisonew}).
By the boundary condition, one has $\bar{u}\cdot\vec n\approx 0$, i.e. $\bar{u}$ is
almost tangential to $S$. Moreover, $D(\bar{u})$ is essentially bounded
by the average of $D(u)$. 
Hence if $D(u)$ is small, by Korn's inequality on surfaces,
the field $\bar{u}$ must be close to a Killing field $v$. If $v$ is not small,
we will get a contradiction to the angle condition (\ref{f1}) or (\ref{f2}).
If $v$ is small then we get good estimates for $\bar{u}$ and ultimately for $u$.

More precisely, the proof proceeds as follows. First (see 
Theorem~\ref{approx}),
an application of Korn's inequality to cylinders of size $h$ and an
 interpolation
argument yield a smooth field $R:S\longrightarrow so(n)$ such that:
\begin{eqnarray}
\int_{S^h}|\nabla u -R\pi|^2 \leq C\int_{S^h}|D(u)|^2, \label{a}\\
\int_{S}|\nabla R|^2 \leq Ch^{-3}\int_{S^h}|D(u)|^2. \label{b}
\end{eqnarray}
From this we deduce (see Lemma \ref{lem2}):
\begin{equation}\label{c}
\int_{S}|\nabla \bar{u} - R_{tan}|^2 \leq Ch^{-1}\int_{S^h}|D(u)|^2
+ Ch \int_{S^h}|\nabla u|^2,
\end{equation}
where $R_{tan}\tau = R\tau$ for all tangent fields $\tau$ and $R_{tan}\vec n=0$.

Using the boundary conditions it is easy to show that (see Lemma \ref{lem1}):
\begin{equation}\label{d}
\int_{S}|\bar{u}\cdot\vec n|^2 \leq Ch \int_{S^h}|\nabla u|^2.
\end{equation}
It is thus natural to study the tangent field:
$$\bar{u}_{tan} = \bar{u}-(\bar{u}\cdot\vec n)\vec n.$$
Now Korn's inequality on $S$ implies that there exists a Killing field $v$ 
such that:
$$\|\bar{u}_{tan} - v\|_{W^{1,2}(S)} \leq C\|D(\bar{u}_{tan})\|_{L^2(S)}.$$
By the angle condition, $v$ must be small in $L^2(S)$,
and hence in $W^{1,2}(S)$ since the Killing fields form a finite dimensional 
space. Thus, $\|\bar{u}_{tan}\|_{W^{1,2}(S)}$ is controlled,
and by (\ref{d}) $\|\bar{u}\|_{L^2(S)}$ is also controlled.
Now the crucial step is to combine (\ref{b}) and (\ref{c}) to deduce that:
\begin{equation}\label{e}
\int_{S}|\nabla (\bar{u}\cdot\vec n)|^2 + |R\vec n|^2 \leq Ch^{-3/2}
\|D(u)\|_{L^2(S^h)}\cdot \|\bar{u}\cdot\vec n\|_{L^2(S)}
+ \mbox{ harmless terms }
\end{equation}
(see Lemma \ref{lem3}). From (\ref{e}) and (\ref{d}) we obtain control on 
$\nabla\bar{u}$. By (\ref{c}) this controls $R_{tan}$, hence $R$, and finally 
(\ref{a}) gives the estimate for $\nabla u$.
The actual argument is by contradiction, assuming that
$h^{-1/2}\|u^h\|_{W^{1,2}(S^h)}=1$ 
and $h^{-1/2}\|D(u^h)\|_{L^h(S^h)}\longrightarrow 0$ 
(see section \ref{section_endproof}).

\medskip

Above and in all subsequent proofs, $C$ denotes an arbitrary positive constant, 
depending on the geometry of $S$ and constants $C_1, C_2, C_3$  
in {\bf (H1)} or the functions $g_1, g_2$ in {\bf (H2)}. 
The constant $C$ may also depend on the choice of $\alpha$, but it 
is always independent of $u$ and $h$.

\section{An example where the constant $C_h$ blows up}\label{example}

Let $g_1, g_2:S\longrightarrow \mathbf{R}$ be some positive and smooth functions, 
and let $g_i^h = h g_i$, $i=1,2$.
Assume that on $S$ there exists a nonzero Killing vector field $v$ such that:
\begin{equation}\label{ex_uno}
v\in \mathcal{I}_{g_1, g_2}(S).
\end{equation}
We are going to construct a family $v^h\in W^{1,2}(S^h, \mathbf{R}^n)$
satisfying the boundary condition
\begin{equation}\label{ex_bd}
v^h\cdot \vec n^h=0 \qquad \mbox{on } \partial S^h,
\end{equation}
for which the uniform bound (\ref{korn}) is not valid (after we take $u^h=v^h$).

\medskip

By $\Pi(x) = \nabla \vec n(x)$ we denote the shape operator on $S$,
that is, the (tangential) gradient of $\vec{n}$. 
For all $x\in S$ and all $t\in (-h g_1(x), h g_2(x))$ define:
\begin{equation}\label{3_def}
v^h(x+t\vec n(x)) = \Big( \mbox{Id} + t \Pi(x) + h \vec n(x) \otimes 
\nabla g_2(x)\Big) v(x). 
\end{equation}
By (\ref{ex_uno}) we obtain:
\begin{equation*}
\begin{split}
v^h(x+t\vec n(x)) = &\frac{hg_1(x) + t}{h(g_1(x) + g_2(x))}\cdot 
\Big( \mbox{Id} + h g_2(x) \Pi(x) + h \vec n(x) \otimes \nabla g_2(x)\Big) v(x)\\
&+ \frac{hg_2(x) - t}{h(g_1(x) + g_2(x))}\cdot 
\Big( \mbox{Id} - h g_1(x) \Pi(x) - h \vec n(x) \otimes \nabla g_1(x)\Big) v(x),
\end{split}
\end{equation*}
which means that each $v^h$ is a linear interpolation between 
the push-forward of the vector field
$v$ from $S$ onto the external part $\partial^+ S^h$ 
of the boundary of $S^h$ and the other push-forward
onto the internal part  $\partial^- S^h$ of $\partial S^h$ 
(see figure \ref{ex_fig}.1). 
Indeed, the derivative of the map:

$$S\ni x\mapsto x\pm hg_i(x) \vec n(x)$$
is given through:
$$ \mbox{Id} \pm hg_i(x) \Pi(x) \pm h \vec n(x) \otimes\nabla g_i(x).$$
In particular, we see that (\ref{ex_bd}) holds.
\begin{figure}[h] \label{ex_fig}
\centerline{\psfig{figure=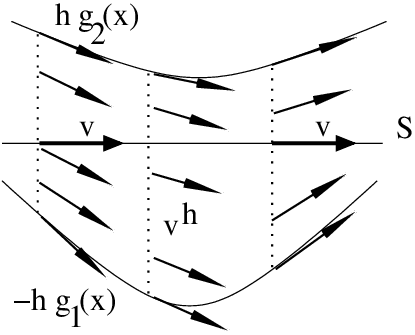,width=6cm,angle=0}}
\caption{The vector fields $v^h$ and $v$.}
\end{figure}
\medskip

Write now $v^h = w + (v^h - w)$, with:
$$w(z) = \Big( \mbox{Id} + t \Pi(x)\Big) v(x), \qquad z = x+t\vec n(x).$$
We wish to estimate the order of different coefficients in $\nabla w$ and $D(w)$.
For every $\tau\in T_xS$, $x\in S$, there holds:
\begin{equation}\label{ex_calc_1}
\begin{split}
\frac{\partial w}{\partial \vec n}(z) & = \Pi(x)v(x),\\
\frac{\partial w}{\partial \tau}(z) & =
t\frac{\partial \Pi}{\partial ((\mbox{Id} + t\Pi(x))^{-1}\tau)}(x) v(x) 
+ (\mbox{Id} + t \Pi(x)) \nabla v(x) (\mbox{Id} + t \Pi(x))^{-1}\tau.
\end{split}
\end{equation}
Observe that:
\begin{equation}\label{ex_calc_2}
\begin{split}
& \left(\frac{\partial w}{\partial \tau}\cdot \vec n + 
\frac{\partial w}{\partial \vec n}\cdot \tau\right) (z) = 
\left(- \frac{\partial \vec n}{\partial \tau}\cdot w + 
\frac{\partial w}{\partial \vec n}\cdot \tau\right) (z)\\
& \qquad \qquad = -\big( \Pi(x)(\mbox{Id} + t \Pi(x))^{-1}\tau\big)\cdot
(\mbox{Id} + t \Pi(x)) v(x) +  \Pi(x)v(x)\cdot \tau \\
& \qquad \qquad = 0,
\end{split}
\end{equation}
because $\vec n\cdot w = 0$ and the symmetric form $\Pi(x)$ commutes with 
$(\mbox{Id} + t \Pi(x))^{-1}$. Likewise:
\begin{equation}\label{ex_calc_3}
\left(\frac{\partial w}{\partial \vec n}\cdot \vec n\right)  (z) = 0.
\end{equation}
To estimate $\eta^T D(w)(z)\tau$, for $\tau, \eta\in T_xS$, notice that:
\begin{equation*}
\begin{split}
\big|\eta^T (\mbox{Id} &+ t \Pi(x)) \nabla v(x) (\mbox{Id} + t \Pi(x))^{-1}\tau\\
& \qquad - \eta^T (\mbox{Id} + t \Pi(x))^{-1} \nabla v(x) 
 (\mbox{Id} + t \Pi(x))^{-1}\tau\big| \leq C t |\nabla v(x)|,
\end{split}
\end{equation*}
because $|(\mbox{Id} + t \Pi(x)) - (\mbox{Id} + t \Pi(x))^{-1}|\leq Ct$.
Above and in the sequel, $C$ denotes any positive constant independent of $h$.
Since $\tau (\mbox{Id} + t \Pi(x))^{-1}\in T_xS$, by (\ref{kill}) 
and (\ref{ex_calc_1}) we obtain:
\begin{equation}\label{ex_calc_4}
|\eta^T D(w)(z)\tau|\leq Ct (|v(x)| + |\nabla v(x)|).
\end{equation}
We also have: $|\nabla (v^h - w)(z)|\leq Ch$ and
by (\ref{ex_calc_2}),  (\ref{ex_calc_3}) and (\ref{ex_calc_4}):
$|D(w) (z)|\leq Ch$ for every $z\in S^h$.  Hence:
$$\|D(v^h)\|^2_{L^2(S^h)} \leq C h^3.$$
On the other hand, inspecting the terms in (\ref{ex_calc_1}) 
and recalling that $v\neq 0$
(and therefore $\nabla v \neq 0$ as well) we see that:
$$\|\nabla v^h\|^2_{L^2(S^h)} \geq \frac{1}{2} \|\nabla v\|^2_{L^2(S^h)} - C h^3 
\geq C h.$$
The two last inequalities imply that the uniform bound (\ref{korn}) is not valid, 
without the restriction (\ref{f2}). Even if $S$ has no rotational symmetry, 
the constants $C_h$ in (\ref{korn_nonunif}) become unbounded as $h\to 0$. 

\begin{remark}\label{rem_trivial_ext}
The construction (\ref{3_def}) is crucial for the counterexample to work.
Indeed, one cannot simply take 'trivial' extensions
$v\pi\in W^{1,2}(S^h)$ for the blow-up of $C_h$.
The reason is that, for any $\tau\in T_x S$, one has:
\begin{equation*}
\begin{split}
& \frac{\partial (v\pi)}{\partial \tau}(z) \cdot \vec n =
- \frac{\partial (\vec n\pi)}{\partial \tau}(z) \cdot (v\pi)(z) 
= - \Pi(x) (\mbox{Id} + t\Pi(x))^{-1}\tau \cdot v(x) 
= \mathcal{O} (1),\\
& \frac{\partial (v\pi)}{\partial \vec n}(z)  = 0,
\end{split}
\end{equation*}
and thus both $\nabla(v\pi)(z)$ and $D(v\pi)(z)$ are of the order 
$\mathcal{O} (1)$. Hence, with a uniform constant $C$:
$$\|\nabla(v\pi)\|_{L^2(S^h)}^2 \leq Ch \|v\|_{W^{1,2}(S)}^2
\leq Ch\leq Ch \|v\|_{L^2(S)}^2 \leq C \|D(v\pi)\|_{L^2(S^h)}^2.$$
\end{remark}

\section{An approximation of $\nabla u$}
\label{section_beginproof}

In this section we construct a smooth function $R$ with skew-symmetric
matrix values, approximating $\nabla u$ on $S^h$ with the error $\|D(u)\|_{L^2(S^h)}$.
The construction relies on Appendix B, where for convenience of the reader 
we analyse the constant in Korn's inequality on a fixed, 
star-shaped with respect to a ball domain (Theorem \ref{app_main_korn}).
We apply this estimate locally and then use a mollification argument 
as in \cite{FJMgeo}. The same approximation result is independently obtained
in \cite{Grisonew} Theorem 4.3, in the context of the unfolding method in 
the linearized elasticity.

As always, $C$ denotes any uniform constant, independent of $u$ and $h$.

\begin{theorem}\label{approx}
Assume {\bf (H1)}. For every $u\in W^{1,2}(S^h,\mathbf{R}^n)$ 
there exists a smooth map $R:S\longrightarrow so(n)$ such that:
\begin{itemize}
\item[(i)] $\begin{displaystyle}
\|\nabla u - R\pi\|_{L^2(S^h)} \leq C  \|D(u)\|_{L^2(S^h)},
\end{displaystyle}$
\item[(ii)]  $\begin{displaystyle}
\|\nabla R \|_{L^2(S)}  \leq C h^{-3/2} \|D(u)\|_{L^2(S^h)}.
\end{displaystyle}$
\end{itemize}
\end{theorem}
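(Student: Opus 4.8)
The plan is to combine a local application of the rigidity/Korn estimate on cylinders of size $h$ with a mollification argument, as in \cite{FJMgeo}. First I would fix a reference scale: cover $S$ by finitely many coordinate patches and, for each $x_0\in S$, consider the curvilinear cylinder $U_{x_0}^h=\{x+t\vec n(x); ~x\in B(x_0,h)\cap S,~ -g_1^h(x)<t<g_2^h(x)\}\subset S^h$. By hypothesis {\bf (H1)} these sets, after rescaling by $1/h$, are uniformly bi-Lipschitz to a fixed star-shaped reference domain $\mathcal{U}$, with Lipschitz constants and diameter ratios controlled independently of $h$ and of $x_0$. Applying Theorem~\ref{app_main_korn} (Korn's inequality on a star-shaped domain, with constant depending only on the diameter ratio and the boundary Lipschitz constant) and scaling back, I obtain for each $x_0$ a skew matrix $Q_{x_0}\in so(n)$ with
$$\int_{U_{x_0}^h}|\nabla u-Q_{x_0}|^2\leq C\int_{U_{x_0}^h}|D(u)|^2,$$
the constant $C$ uniform. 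The scaling bookkeeping here — that the Korn constant is scale invariant while the $h$-factors from the change of variables cancel between the two sides — is routine but must be done carefully.

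Next I would turn the family of ``locally constant'' skew matrices into a single smooth field $R$. The standard device is to define, for $x\in S$, the matrix-valued average over a neighborhood of size $\sim h$: roughly
$$R(x)=\fint_{B(x,h)\cap S}\Big(\fint_{-g_1^h(y)}^{g_2^h(y)}\mathrm{skew}\,\nabla u(y+t\vec n(y))~\mathrm{d}t\Big)~\mathrm{d}y,$$
or, more robustly, a mollification of a piecewise-constant choice built from the $Q_{x_0}$'s via a partition of unity subordinate to the cover by the $h$-balls. Since each mollifier is smooth, $R:S\to so(n)$ is automatically smooth and $so(n)$-valued (averaging preserves skew-symmetry). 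To prove (i), I would estimate $\|\nabla u-R\pi\|_{L^2(U_{x_0}^h)}$ by the triangle inequality against $Q_{x_0}$: the term $\|\nabla u-Q_{x_0}\|_{L^2(U_{x_0}^h)}$ is controlled by the local Korn estimate above, and $\|Q_{x_0}-R\pi\|_{L^2(U_{x_0}^h)}$ is controlled because $R(x)$ on $B(x_0,h)\cap S$ is an average of the $Q_{y}$'s over overlapping $h$-balls, and $|Q_{x_0}-Q_{y}|$ for neighboring patches is bounded, via a telescoping/Poincaré-type argument, by $h^{-n/2}\|\nabla u-Q_{x_0}\|_{L^2}$ on the union — the usual ``nearby optimal rigid motions are close'' lemma. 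Summing over the finitely-overlapping cover (bounded overlap constant) gives (i).

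For (ii) I would differentiate the mollified $R$. Since $R$ is a convolution at scale $h$ of a field whose local oscillation is $|Q_{x_0}-Q_y|\lesssim h^{-n/2}\|\nabla u-Q_{x_0}\|_{L^2(\text{neighborhood})}$, each tangential derivative of $R$ costs an extra factor $h^{-1}$, so that pointwise $|\nabla R(x)|\lesssim h^{-1}\cdot h^{-n/2}\|\nabla u-R\pi\|_{L^2(\text{nbhd of }x)}$, more precisely $|\nabla R(x)|^2\lesssim h^{-2-n}\int_{B(x,Ch)\cap S}\fint_{-g_1^h}^{g_2^h}|\nabla u-R\pi|^2$. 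Integrating over $x\in S$, using Fubini and the bounded-overlap property, and noting that integration in the normal variable $t$ over an interval of length $\sim h$ converts the surface integral into a volume integral over $S^h$ up to a factor $h^{-1}$, yields
$$\int_S|\nabla R|^2\leq Ch^{-2-n}\cdot h^{-1}\cdot h^{n}\int_{S^h}|\nabla u-R\pi|^2 = Ch^{-3}\int_{S^h}|\nabla u-R\pi|^2\leq Ch^{-3}\int_{S^h}|D(u)|^2,$$
using (i) in the last step; taking square roots gives (ii). The main obstacle, and the step I would spend the most care on, is the ``patching'' estimate controlling $|Q_{x_0}-Q_y|$ for adjacent patches and turning it into bounds on $R$ and $\nabla R$ with the correct powers of $h$: one must track how the local Korn estimates on the (non-flat, but uniformly controlled) curvilinear cylinders interact with the curvature of $S$ and with the variation of $g_i^h$, and verify that the geometry of $S$ enters only through uniform constants. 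Everything else — the scaling, the partition of unity, the convolution estimates — is standard once that piece is in place.
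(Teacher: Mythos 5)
Your proposal follows essentially the same route as the paper's proof: a local Korn inequality on $h$-sized cylinders obtained from the star-shaped-domain estimate of Theorem~\ref{app_main_korn}, a scale-$h$ mollification producing a smooth $so(n)$-valued field $R$, a triangle-inequality comparison of $R\pi$ with the local skew matrices, and bounded-overlap covering arguments for both (i) and (ii). The only blemish is a pair of compensating normalization slips in your computation for (ii): the pointwise bound should read $|\nabla R(x)|^2\lesssim h^{-n-2}\int_{\mathrm{cyl}(x)}|\nabla u-R\pi|^2$ with the \emph{volume} integral over the cylinder (your surface-integral-of-normal-averages version is an extra $h^{-1}$ off), while the overlap factor when integrating over the $(n-1)$-dimensional surface $S$ is $h^{n-1}$, not $h^{n}$ --- the two errors cancel, so the stated conclusion $\int_S|\nabla R|^2\leq Ch^{-3}\int_{S^h}|D(u)|^2$ is correct and the argument is otherwise sound.
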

\begin{proof}
{\bf 1.} 
For $x\in S$ consider balls in $S$ and 'cylinders' in $S^h$ defined by:
$$D_{x,h} = B(x,h)\cap S, \qquad B_{x,h} = \pi^{-1}(D_{x,h})\cap S^h.$$
The main observation is that sets $B_{x,h}$ are contained in a ball of radius
$(C_2+1)h$ and are star-shaped with respect to a ball of radius
$r(C_1, C_2, C_3,S)h$, when $h$ is sufficiently small. 
Hence, an application of Korn's inequality on $B_{x,h}$
(see Theorem \ref{app_main_korn}) yields a skew-symmetric matrix $A_{x,h}\in so(n)$
such that:
\begin{equation}\label{1.uno}
\int_{B_{x,h}} |\nabla u(z) - A_{x,h}|^2~\mbox{d}z \leq C \int_{B_{x,h}} |D(u)|^2.
\end{equation}
Indeed, recalling the assumption {\bf (H1)} we see that for $h$ sufficiently small,
$B_{x,h}$ are star-shaped with respect to $x$ and that both the Lipschitz 
constants of their boundaries and the ratios of their diameters have common bounds.

Our goal is to replace $A_{x,h}$ by a matrix $R(x)$ which depends smoothly on $x$.
This will allow us to replace $A_{x,h}$ by $R(\pi z)$ in (\ref{1.uno}).
The desired estimate on $S^h$ then follows by summing
over a suitable family of cylinders.
The smoothness of $R$ will also play essential role in the key estimate
in Lemma \ref{lem3}.

\medskip

{\bf 2.} 
To define $R(x)$ consider a cut-off function 
$\vartheta\in \mathcal{C}_c^\infty ([0,1))$, with $\vartheta\geq 0$, 
$\vartheta$ constant in 
a neighbourhood of $0$, and $\int_0^1\vartheta = 1$.
For each $x\in S$ define:
$$\displaystyle\eta_x(z) = \frac{\vartheta(|\pi z - x|/h)}
{\int_{S^h}\vartheta(|\pi z - x|/h) ~\mbox{d}z}.$$
Then $\eta_x(z) = 0$ for $z\not\in B_{x,h}$ and:
$$\int_{S^h} \eta_x(z)~\mbox{d}z = 1,\qquad |\eta_x|\leq \frac{C}{h^n},
\qquad |\nabla_{x}\eta_x|\leq \frac{C}{h^{n+1}}$$
Define $R(x)$ as the average:
$$R(x) = \int_{S^h} \eta_x(z) \mbox{ skew}(\nabla u(z))~\mbox{d}z,$$
where $\mbox{skew} (F) = (F-F^T)/2$ denotes the skew-symmetric part of a given 
matrix $F$. Since $\int\eta_x = 1$, we have:
$$R(x) - A_{x,h} = \int_{S^h} \eta_x(z) 
\mbox{ skew}(\nabla u(z)- A_{x,h})~\mbox{d}z,$$
and by the Cauchy-Schwarz inequality, noting that $|\mbox{skew} (F)|\leq C |F|$
we obtain:
\begin{equation}\label{1.due}
|R(x) - A_{x,h}|^2 \leq 
C\left(\int_{S^h} \eta_x(z) |\nabla u(z)- A_{x,h}|~\mbox{d}z\right)^2
\leq \frac{C}{h^n}\int_{B_{x,h}} |D(u)|^2.
\end{equation}
To estimate the derivative of $R$ we use that:
$$\int_{S^h} \nabla_x\eta_x(z)~\mbox{d}z 
= \nabla_x\left(\int_{S^h} \eta_x(z)~\mbox{d}z\right)=0.$$
Thus:
$$\nabla R(x) = \int_{S^h} \left(\nabla_x\eta_x\right) \mbox{ skew}(\nabla u)
= \int_{S^h} \left(\nabla_x\eta_x\right) \mbox{ skew}(\nabla u- A_{x,h})$$
and by (\ref{1.uno}):
\begin{equation}\label{1.tre}
|\nabla R(x)|^2 \leq \int_{B_{x,h}}\left|\nabla_x\eta_x\right|^2\cdot
\int_{B_{x,h}}\left|\nabla u - A_{x,h}\right|^2 \leq \frac{C}{h^{n+2}}
\int_{B_{x,h}} |D(u)|^2.
\end{equation}
Similarly, we get for all $x'\in D_{x,h}$:
\begin{equation}\label{1.quattro}
|\nabla R(x')|^2 \leq \frac{C}{h^{n+2}} \int_{B_{x',h}} |D(u)|^2 \leq
\frac{C}{h^{n+2}}\int_{2B_{x,h}} |D(u)|^2,
\end{equation}
where $2B_{x,h} = \pi^{-1}(D_{x,2h})\cap S^h$. From this, by the fundamental theorem
of calculus:
$$|R(x'') - R(x)|^2 \leq \frac{C}{h^n}\int_{2B_{x,h}} |D(u)|^2 
\qquad \forall x''\in D_{x,h}.$$
In combination with (\ref{1.uno}) and (\ref{1.due}) this yields:
\begin{equation}\label{1.cinque}
\int_{B_{x,h}} |\nabla u(z) - R(\pi z)|^2~\mbox{d}z 
\leq C \int_{2B_{x,h}} |D(u)|^2.
\end{equation}
Now cover $S^h$ with a family $\{B_{x_i,h}\}_{i=1}^{N(h)}$ 
so that the covering number of $\{2B_{x_i,h}\}_{i=1}^{N(h)}$ is independent of $h$. 
A possible argument for the existence of such a covering goes as follows.
The surface $S$ is contained in the finite union of balls 
$\cup_{i=1}^{N(h)} B(x_i, h/2)$
where $k_i\in (\frac{h}{2}\mathbb{Z})^n$. Fix a one-to-one map $k_i\mapsto
x_i\in S\cap B(k_i, h/2)$, so that $S^h= \cup_i B_{x_i, h}$.
Then, if $z\in 2B_{x_i, h}$ there must be $\pi(z)\in B(x_i, 2h)$, so that
$|k_i - \pi(z)|\leq |k_i - x_i| + |\pi(z) - x_i| \leq 5h/2$. Therefore 
$k_i\in B(x,5h/2) \cap (\frac{h}{2}\mathbb{Z})^n$. The cardinality of this
last set is bounded by $10^n$, which must as well be a covering number for
the family $\{2B_{x_i,h}\}_{i=1}^{N(h)}$.

Summing (\ref{1.cinque}) 
over $i=1\ldots N$ proves (i).
Finally, integrating (\ref{1.quattro}) on $D_{x,h}$ we get;
$$\int_{D_{x,h}}  |\nabla R(x')|^2~\mbox{d}x' 
\leq \frac{C}{h^3}\int_{2B_{x,h}} |D(u)|^2,$$
and using the same covering as before we obtain (ii).
\end{proof}

Following the same argument, we will prove a uniform Poincar\'e inequality 
in thin domains - see Theorem \ref{th_uniform_poincare} in Appendix D.

\section{The key estimates}
\label{key_estimates}

Let $\bar{u}:S\longrightarrow \mathbf{R}^n$ be the average of $u$ in the 
normal direction:
\begin{equation}\label{average}
\bar{u}(x) = \fint_{-g_1^h(x)}^{g_2^h(x)} u(x+ t\vec n(x))~\mbox{d}t \qquad 
\forall x\in S.
\end{equation}
In this section we will establish four useful estimates on various components 
of $\bar{u}$ and their derivatives. 

The first estimate on $\nabla \bar{u}$, is an extension of the previous Theorem \ref{approx}:

\begin{lemma}\label{lem2}
Assume {\bf (H1)}. For every $u\in W^{1,2}(S^h,\mathbf{R}^n)$ there holds:
$$\|\nabla \bar{u} - R_{tan}\|_{L^2(S)} \leq
Ch^{1/2} \|u\|_{W^{1,2}(S^h)} + Ch^{-1/2}  \|D(u)\|_{L^2(S^h)},$$
where the subscript '$tan$' refers to the tangential components 
of the appropriate matrix valued function, that is:
$R_{tan}(x)\vec n(x) = 0$ and $R_{tan}(x)\tau = R(x)\tau$ for all 
$x\in S$ and $\tau\in T_xS$.
\end{lemma}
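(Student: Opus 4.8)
The estimate on $\nabla\bar u - R_{tan}$ should follow by differentiating the averaging formula (\ref{average}) and comparing the result, slice by slice, with the matrix $R$ provided by Theorem~\ref{approx}. The key point is that the normal-direction average commutes with tangential differentiation up to controllable error terms arising from (a) the $x$-dependence of the integration limits $-g_1^h(x), g_2^h(x)$, and (b) the change-of-variables Jacobian in passing between integration over $S^h$ and integration over $S\times(\text{fibre})$, which is $\det(\mathrm{Id}+t\Pi(x)) = 1 + \mathcal{O}(t) = 1 + \mathcal{O}(h)$ under {\bf (H1)}.

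\medskip

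\noindent\textbf{Step 1: differentiate the average.} For a tangent vector $\tau\in T_xS$, write $\bar u(x) = \frac{1}{g_1^h(x)+g_2^h(x)}\int_{-g_1^h(x)}^{g_2^h(x)} u(x+t\vec n(x))\,\mathrm{d}t$. Differentiating in the direction $\tau$ produces three groups of terms: (i) boundary terms from differentiating the limits, each of the form $\frac{1}{g_1^h+g_2^h}(\partial_\tau g_i^h)\, u(x \pm g_i^h(x)\vec n(x))$, which are pointwise $\mathcal{O}(h)\cdot|u|$ by {\bf (H1)} (since $|\nabla g_i^h|\le C_3 h$ and $g_1^h+g_2^h\ge 2C_1 h$); (ii) a term from differentiating the prefactor $1/(g_1^h+g_2^h)$, again $\mathcal{O}(h)\cdot|\bar u|$ in the same way; and (iii) the main term $\fint \partial_\tau\big(u(x+t\vec n(x))\big)\,\mathrm{d}t$. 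In the main term, $\partial_\tau(u\circ(\mathrm{id}+t\vec n)) = (\nabla u)(x+t\vec n(x))\cdot(\mathrm{Id}+t\Pi(x))\tau$, so after absorbing the factor $(\mathrm{Id}+t\Pi(x)) = \mathrm{Id} + \mathcal{O}(h)$ we get $\fint (\nabla u)(x+t\vec n(x))\tau\,\mathrm{d}t + (\text{error }\mathcal{O}(h)|\nabla u|)$. The $\vec n$-component of $\nabla\bar u$ is handled separately: $\bar u(x)$ depends only on $x\in S$, so $\nabla\bar u(x)\vec n(x)=0$, matching $R_{tan}(x)\vec n(x)=0$ exactly.

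\medskip

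\noindent\textbf{Step 2: compare with $R$ and integrate.} By Theorem~\ref{approx}(i), $\fint_{-g_1^h}^{g_2^h}(\nabla u)(x+t\vec n(x))\tau\,\mathrm{d}t$ differs from $R(x)\tau$ by a quantity whose $L^2(S)$ norm, after multiplying by the fibre length $\sim h$ and summing (i.e.\ using Cauchy--Schwarz in $t$ and Fubini, converting $\int_S h^{-1}\big|\fint(\nabla u - R\pi)\big|^2 \le \int_S h^{-2}\cdot h^{-1}\int_{\text{fibre}}|\nabla u - R\pi|^2$ — more carefully, $|\fint f|^2 \le \fint|f|^2$ and the Jacobian is bounded above and below), is controlled by $h^{-1}\|\nabla u - R\pi\|_{L^2(S^h)}^2 \le Ch^{-1}\|D(u)\|_{L^2(S^h)}^2$, giving the $Ch^{-1/2}\|D(u)\|_{L^2(S^h)}$ contribution. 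The error terms of type (i), (ii), (iii) collected in Step~1 are pointwise $\mathcal{O}(h)(|u| + |\nabla u|)$ on $S$, but they are values/averages of functions on $S^h$; converting their $L^2(S)$ norms to $L^2(S^h)$ norms again costs a factor $h^{-1/2}$ against the fibre length, so $\|\mathcal{O}(h)(|u|+|\nabla u|)\|_{L^2(S)} \le Ch\cdot h^{-1/2}\|u\|_{W^{1,2}(S^h)} = Ch^{1/2}\|u\|_{W^{1,2}(S^h)}$, which is exactly the first term in the claimed bound. Combining the two contributions via the triangle inequality yields the lemma.

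\medskip

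\noindent\textbf{Main obstacle.} The delicate point is bookkeeping the passage between norms on $S$ and on $S^h$: every pointwise estimate on $S$ must be weighted correctly by the $\mathcal{O}(h)$ fibre thickness, and it is easy to lose or gain a power of $h^{1/2}$. In particular one must be careful that the boundary terms (i), which involve the trace of $u$ on $\partial S^h$ rather than an average over the fibre, are still controlled — here one uses the trace/fibre estimate $|u(x\pm g_i^h(x)\vec n(x))|^2 \le \fint_{\text{fibre}}|u|^2 + C\big(\fint|u|^2\big)^{1/2}\big(\fint|\partial_{\vec n} u|^2\big)^{1/2}\cdot h$ (a one-dimensional Sobolev trace inequality on an interval of length $\sim h$), so that after the weighting these too land in $Ch^{1/2}\|u\|_{W^{1,2}(S^h)}$. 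Everything else is the routine $\mathrm{Id}+t\Pi = \mathrm{Id}+\mathcal{O}(h)$ expansion already used in Section~\ref{example}.
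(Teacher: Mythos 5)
Your overall strategy is the paper's: differentiate the averaged field, compare the main term with $R$ via Theorem~\ref{approx}(i), and bookkeep the $h$-weights in passing from $L^2(S)$ to $L^2(S^h)$. Step 2 (the main term and the $Ch^{-1/2}\|D(u)\|_{L^2(S^h)}$ contribution) is fine. But there is a genuine error in Step 1: the coefficients of your terms (i) and (ii) are \emph{not} $\mathcal{O}(h)$. By {\bf (H1)} one has $|\partial_\tau g_i^h|/(g_1^h+g_2^h)\leq C_3h/(2C_1h)=C_3/(2C_1)$, i.e.\ a constant of order one, not of order $h$ — you have effectively computed $h/h=h$. With the correct $\mathcal{O}(1)$ coefficient, treating (i) and (ii) separately (as you do, including via the one-dimensional trace inequality in your last paragraph, which still bounds the trace by $\fint_{\mathrm{fibre}}|u|^2$ up to lower-order terms) only yields a contribution of size $Ch^{-1/2}\|u\|_{W^{1,2}(S^h)}$ after the $L^2(S)\to L^2(S^h)$ conversion. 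This is larger by a factor $h^{-1}$ than the claimed $Ch^{1/2}\|u\|_{W^{1,2}(S^h)}$, and it is not merely a cosmetic loss: in the proof of the main theorems one normalizes $h^{-1/2}\|u^h\|_{W^{1,2}(S^h)}=1$, so an $h^{-1/2}\|u\|_{W^{1,2}}$ term would destroy the convergence in (\ref{sm1}).

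The missing idea is a cancellation between (i) and (ii), which is exactly what the paper's first displayed inequality encodes. Writing $g=g_1^h+g_2^h$, the boundary terms and the prefactor term combine into $\frac{1}{g}\big(\partial_\tau g_2^h\,[u(x+g_2^h\vec n)-\bar u(x)]+\partial_\tau g_1^h\,[u(x-g_1^h\vec n)-\bar u(x)]\big)$: the parts proportional to $u$ itself cancel, and each bracket is bounded by $\int_{-g_1^h(x)}^{g_2^h(x)}|\partial_{\vec n}u|\,\mathrm{d}t$ by the fundamental theorem of calculus on the fibre. The $\mathcal{O}(1)$ coefficient then multiplies a quantity whose $L^2(S)$ norm is at most $Ch^{1/2}\|\nabla u\|_{L^2(S^h)}$ (Cauchy--Schwarz in $t$ plus Fubini), which is why the paper's error term involves only $|\partial_{\vec n}u|$ and lands in $Ch^{1/2}\|u\|_{W^{1,2}(S^h)}$. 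With this regrouping your argument goes through; without it, the claimed pointwise bound $\mathcal{O}(h)|u|$ is false and the lemma does not follow.
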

\begin{proof}
Through a direct calculation one checks that for every $x\in S$ and $\tau\in T_x S$ there holds:
\begin{equation*}
\begin{split}
&\left| \partial_\tau \bar{u}(x) - \fint_{-g_1^h(x)}^{g_2^h(x)} \nabla u(x + t\vec n(x))\cdot
\left\{\tau + t\partial_\tau \vec n(x)\right\}~\mbox{d}t\right| \\
&\qquad \leq \frac{C}{h} \left(|\partial_\tau g_1^h(x)| + |\partial_\tau g_2^h(x)|\right)\cdot
\int_{-g_1^h(x)}^{g_2^h(x)}|\partial_{\vec n} u(x + t\vec n(x))|~\mbox{d}t
\leq C\int_{-g_1^h(x)}^{g_2^h(x)} |\nabla u|~\mbox{d}t
\end{split}
\end{equation*}
and:
\begin{equation*}
\begin{split}
&\fint_{-g_1^h(x)}^{g_2^h(x)} \left|\nabla u(x + t\vec n(x))\cdot
\left(\tau + t\partial_\tau \vec n(x)\right) - R(x)\tau\right|~\mbox{d}t \\
&\qquad\qquad
 \leq C \int_{-g_1^h(x)}^{g_2^h(x)}|\nabla u|~\mbox{d}t + 
\fint_{-g_1^h(x)}^{g_2^h(x)} |\nabla u(x + t\vec n(x)) - R(x)|~\mbox{d}t.
\end{split}
\end{equation*}
Hence, by Theorem \ref{approx} (i):
\begin{equation*}
\begin{split}
\|\nabla \bar{u} - R_{tan}\|_{L^2(S)}^2 & \leq 
C\int_S\left\{h \int_{-g_1^h(x)}^{g_2^h(x)}|\nabla u|^2~\mbox{d}t +
h^{-1}\fint_{-g_1^h(x)}^{g_2^h(x)} |\nabla u - R\pi|^2~\mbox{d}t\right\}~\mbox{d}x\\
& \leq Ch \|\nabla u\|_{L^2(S^h)}^2 + Ch^{-1} \|D(u)\|_{L^2(S^h)}^2.
\end{split}
\end{equation*}
\end{proof}

In order to estimate the normal part $\bar{u}$, we will use the following bounds:

\begin{lemma}\label{trivial}
Recall that $\partial S^h = \partial^- S^h \cup \partial^+ S^h$, with:
\begin{equation}\label{bd_12}
\begin{split}
\partial^- S^h &= \{ x - g_1^h(x)\vec n(x) ; ~ x\in S\},\\
\partial^+ S^h &= \{ x + g_2^h(x)\vec n(x) ; ~ x\in S\}. 
\end{split}
\end{equation}
\begin{itemize}
\item[(i)] If {\bf (H1)} holds then $|\vec n^h(z) - \vec n(\pi(z))|\leq Ch$ for all 
$z\in\partial^+ S^h$ and $|\vec n^h(z) + \vec n(\pi(z))|\leq Ch$ for all 
$z\in\partial^- S^h$.
\item[(ii)] If {\bf (H2)} holds then:
\begin{equation*}
\begin{split}
|\vec n^h(z) + \vec n(\pi(z)) + \nabla g_1^h(\pi(z))|&\leq Ch^2 \qquad \forall z\in \partial^- S^h,\\
|\vec n^h(z) - \vec n(\pi(z)) + \nabla g_2^h(\pi(z))|&\leq Ch^2 \qquad \forall z\in \partial^+ S^h.
\end{split}
\end{equation*}
\end{itemize}
Let now $u\in W^{1,2}(S^h, \mathbf{R}^n)$. 
\begin{itemize}
\item[(iii)] ${\displaystyle |\partial_{\vec n} (u\cdot\vec n)(z)| \leq |D(u)(z)|}$ for all $z\in S^h$.
\item[(iv)] If {\bf (H1)} holds and $u\cdot\vec n^h=0$ on $\partial^+ S^h$,
then:
 $$\displaystyle \|u\cdot \vec n\|_{L^2(\partial^+ S^h)}\leq
Ch^{1/2}\|u\|_{W^{1,2}(S^h)}.$$
\item[(v)] If {\bf (H2)} holds and $u\cdot\vec n^h=0$ on $\partial S^h$:
\begin{equation*}
\begin{split}
\int_S |u(x - g_1^h(x)&\vec n(x))\cdot\nabla g_1^h(x) +
 u(x + g_2^h(x)\vec n(x))\cdot\nabla g_2^h(x)|^2 ~\mathrm{d}x\\
& \leq Ch \int_{S^h}|D(u)|^2 + Ch^3 \|u\|_{W^{1,2}(S^h)}^2.
\end{split}
\end{equation*}
\end{itemize}
\end{lemma}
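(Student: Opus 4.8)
The plan is to prove the six assertions of Lemma~\ref{trivial} essentially one at a time, since they are of three distinct flavours: geometric estimates on $\vec n^h$ (parts (i)--(ii)), a pointwise algebraic identity (part (iii)), and trace-type estimates exploiting the boundary condition (parts (iv)--(v)).

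First I would settle (i) and (ii). The boundary $\partial^+S^h$ is the image of the map $\Psi^h_+:S\ni x\mapsto x+g_2^h(x)\vec n(x)$, whose tangential differential is $\mathrm{Id}+g_2^h\,\Pi(x)+\vec n(x)\otimes\nabla g_2^h(x)$ (the same computation already used in Section~\ref{example}). A unit normal to the image is obtained by, e.g., taking the cross-product / cofactor of this differential, or more simply by noting that any tangent vector $w$ to $\partial^+S^h$ has the form $w=\tau+g_2^h\Pi\tau+(\nabla g_2^h\cdot\tau)\vec n$ for $\tau\in T_xS$, so that $\vec n(x)\cdot w=\nabla g_2^h(x)\cdot\tau=\mathcal O(h)|\tau|$ under {\bf (H1)}; hence $\vec n^h$ differs from $\vec n(\pi(z))$ by $\mathcal O(h)$. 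Under {\bf (H2)}, $\nabla g_2^h=\mathcal O(h)$ with $g_2^h=\mathcal O(h)$, and one computes the normal to second order: writing $\vec n^h=(\vec n-\nabla g_2^h)/|\vec n-\nabla g_2^h|+\mathcal O(h^2)$ after accounting for the $g_2^h\Pi$ term (which only shifts $w$ within $T_xS\oplus\mathbf R\vec n$ by $\mathcal O(h)$ in a way orthogonal to the correction), one gets $\vec n^h(z)-\vec n(\pi(z))+\nabla g_2^h(\pi(z))=\mathcal O(h^2)$. The $\partial^-S^h$ statements are identical with $\vec n\mapsto-\vec n$, $g_2^h\mapsto g_1^h$. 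Part (iii) is immediate: $\partial_{\vec n}(u\cdot\vec n)=(\partial_{\vec n}u)\cdot\vec n=(\nabla u\,\vec n)\cdot\vec n=\vec n^TD(u)\vec n$ since $\vec n$ is constant along the normal direction (the line $t\mapsto x+t\vec n(x)$ has direction $\vec n(x)$ independent of $t$), and $|\vec n^TD(u)\vec n|\le|D(u)|$.

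For (iv): on $\partial^+S^h$ we have $u\cdot\vec n^h=0$, so $u\cdot\vec n=u\cdot(\vec n-\vec n^h)$, and by (i) this is pointwise bounded by $Ch|u|$ on $\partial^+S^h$; therefore $\|u\cdot\vec n\|_{L^2(\partial^+S^h)}\le Ch\|u\|_{L^2(\partial^+S^h)}$. Then I invoke the standard trace estimate on the thin shell: $\|u\|_{L^2(\partial^+S^h)}\le Ch^{-1/2}\|u\|_{L^2(S^h)}+Ch^{1/2}\|\nabla u\|_{L^2(S^h)}\le Ch^{-1/2}\|u\|_{W^{1,2}(S^h)}$, which follows by writing, for each $x$, $u(x+g_2^h(x)\vec n(x))^2\le \frac{C}{h}\int_{-g_1^h}^{g_2^h}|u|^2\,\mathrm dt+C\int_{-g_1^h}^{g_2^h}|u|\,|\partial_{\vec n}u|\,\mathrm dt$ and integrating over $S$. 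Combining, $\|u\cdot\vec n\|_{L^2(\partial^+S^h)}\le Ch\cdot h^{-1/2}\|u\|_{W^{1,2}(S^h)}=Ch^{1/2}\|u\|_{W^{1,2}(S^h)}$.

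Part (v) is the main obstacle, as it requires the sharper {\bf (H2)} expansions and must land the $D(u)$ term with the right (small) power of $h$. The idea: on $\partial^\pm S^h$, $0=u\cdot\vec n^h$, so by (ii), $u(x-g_1^h\vec n)\cdot(-\vec n-\nabla g_1^h)=\mathcal O(h^2)|u|$ and $u(x+g_2^h\vec n)\cdot(\vec n-\nabla g_2^h)=\mathcal O(h^2)|u|$. Adding these and using that the two evaluation points differ by $(g_1^h+g_2^h)\vec n=\mathcal O(h)\vec n$, one finds
\[
u(x-g_1^h(x)\vec n(x))\cdot\nabla g_1^h(x)+u(x+g_2^h(x)\vec n(x))\cdot\nabla g_2^h(x)
= \big(u(x+g_2^h\vec n)-u(x-g_1^h\vec n)\big)\cdot\vec n + \mathcal O(h^2)|u|,
\]
where the leading term should be recognized as a normal increment of $u\cdot\vec n$ (up to acceptable error, again because $\vec n$ is constant along the normal line), namely $\int_{-g_1^h}^{g_2^h}\partial_{\vec n}(u\cdot\vec n)(x+t\vec n)\,\mathrm dt$, which by (iii) is bounded in absolute value by $\int_{-g_1^h}^{g_2^h}|D(u)|\,\mathrm dt$. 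Squaring, integrating over $S$, and using Cauchy--Schwarz together with $g_1^h+g_2^h=\mathcal O(h)$ gives $\int_S|\cdots|^2\le Ch\int_{S^h}|D(u)|^2+Ch^4\int_S|u(\cdot\text{ on }\partial S^h)|^2$, and the boundary term is controlled by $h^4\cdot h^{-1}\|u\|_{W^{1,2}(S^h)}^2=h^3\|u\|_{W^{1,2}(S^h)}^2$ by the trace estimate used in (iv). Care is needed in tracking that the $\mathcal O(h^2)|u|$ errors really do produce only an $h^3$ (not $h^2$) contribution after using the trace bound, and in verifying that replacing the difference $u(x+g_2^h\vec n)-u(x-g_1^h\vec n)$ by the integral of $\partial_{\vec n}u$ against $\vec n$ costs only terms already absorbed into the $D(u)$ and $W^{1,2}$ bounds — this bookkeeping is the crux of part (v).
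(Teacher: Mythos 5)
Your proposal is correct and follows essentially the same route as the paper: the normal expansions in (i)--(ii) via the tangential differential $\mathrm{Id}+g_i^h\Pi\pm\vec n\otimes\nabla g_i^h$, the identity $\partial_{\vec n}(u\cdot\vec n)=\vec n\, D(u)\,\vec n$ for (iii), the substitution $u\cdot\vec n=u\cdot(\vec n-\vec n^h)$ plus the uniform trace estimate for (iv), and for (v) the replacement of $u\cdot\nabla g_i^h$ by $\pm u\cdot\vec n$ up to $\mathcal O(h^2)|u|$ errors, writing the resulting normal increment as $\int\partial_{\vec n}(u\cdot\vec n)\,\mathrm dt$ and absorbing the error through the trace bound. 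The bookkeeping you flag as the crux works out exactly as you describe (the increment identity is exact since $\vec n(x)$ is constant along the normal segment), matching the paper's argument.
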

\begin{proof}
(i) is obvious. To prove (ii) observe, for example, that
on $\partial^+ S^h$ the normal $\vec n^h(z)$ is parallel to
$\vec n(\pi(z)) - \nabla g_2^h(\pi(z)) + w$, where 
$|w|\leq C |g_2^h(\pi(x)) \nabla g_2^h(\pi(z))|
\leq Ch^2$. Normalising this vector  we conclude the second inequality in (ii). 
The first one follows in the same manner.

(iii) follows from: $\partial_{\vec n}(u\cdot\vec n) = D(u) \vec n \cdot \vec n$.

To prove (iv), use (i) and the trace theorem in Appendix D:
$$\|u\cdot\vec{n}\|_{L^2(\partial^+ S^h)} 
= \|u\cdot(\vec n - \vec n^h)\|_{L^2(\partial^+ S^h)}
\leq C h^{1/2}\|u\|_{W^{1,2}(S^h)}.$$

For (v), use (ii), (iii) and Theorem \ref{th_uniform_trace}:
\begin{equation*} 
\begin{split}
&\int_S |u(x + g_2^h(x)\vec n(x))\cdot\nabla g_2^h(x) +
 u(x - g_1^h(x)\vec n(x))\cdot\nabla g_1^h(x)|^2 ~\mathrm{d}x\\
& \leq \int_S |u(x + g_2^h(x)\vec n(x))\cdot\vec n(x) -
 u(x - g_1^h(x)\vec n(x))\cdot\vec n(x)|^2 ~\mathrm{d}x
+ Ch^4 \int_{\partial S^h} |u|^2 \\
& \qquad\qquad 
= \int_S\left|\int_{-g_1^h(x)}^{g_2^h(x)} \partial_{\vec n} 
(u\cdot \vec n) (x+t\vec n(x))
~\mathrm{d}t\right|^2 ~\mathrm{d}x + Ch^4 \int_{\partial S^h} |u|^2 \\
& \qquad \qquad \leq Ch \int_{S^h}|D(u)|^2 + Ch^3 \|u\|_{W^{1,2}(S^h)}^2.
\end{split}
\end{equation*}
\end{proof}

%\begin{lemma}\label{asymptotics}
%Assume {\bf (H1)} and let $u\in W^{1,2}(S, \mathbf{R}^n)$ be any vector field 
%on $S$. Then:
%$$(2C_1)^{1/2} \|u\|_{L^2(S)}\leq \lim_{h\to 0} h^{-1/2} \|u \pi\|_{L^2(S)}\leq 
%(2C_2)^{1/2} \|u\|_{L^2(S)}.$$
%\end{lemma}

\begin{lemma}\label{lem1}
Assume {\bf (H1)} and let $u\in W^{1,2}(S^h,\mathbf{R}^n)$ satisfy $u\cdot\vec n^h = 0$
on $\partial^+ S^h$. Then:
$$\|\bar{u}\cdot \vec n\|_{L^2(S)}\leq Ch^{1/2} \|u\|_{W^{1,2}(S^h)}.$$
\end{lemma}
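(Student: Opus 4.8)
The plan is to control $\bar u\cdot\vec n$ at the boundary using the tangency condition on $\partial^+S^h$, and then propagate this bound through the shell using the derivative estimate (iii) of Lemma~\ref{trivial}. First I would fix $x\in S$ and write $z^+ = x+g_2^h(x)\vec n(x)\in\partial^+S^h$. By the fundamental theorem of calculus along the normal direction, for any $t\in(-g_1^h(x),g_2^h(x))$ one has
$$
(u\cdot\vec n)(x+t\vec n(x)) = (u\cdot\vec n)(z^+) - \int_t^{g_2^h(x)} \partial_{\vec n}(u\cdot\vec n)(x+s\vec n(x))~\mathrm{d}s,
$$
where here $\vec n=\vec n(x)$ is fixed, so that $\partial_{\vec n}$ acts only on the $u$ factor. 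Averaging this identity in $t$ over $(-g_1^h(x),g_2^h(x))$ gives
$$
\bar u(x)\cdot\vec n(x) = (u\cdot\vec n)(z^+) - \fint_{-g_1^h(x)}^{g_2^h(x)}\int_t^{g_2^h(x)} \partial_{\vec n}(u\cdot\vec n)(x+s\vec n(x))~\mathrm{d}s~\mathrm{d}t.
$$

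Next I would estimate the two terms. For the boundary term, the trivial bound $|(u\cdot\vec n)(z^+)| \le \|u\cdot\vec n\|$ integrated over $S$ together with Lemma~\ref{trivial}(iv) gives
$$
\|(u\cdot\vec n)(x+g_2^h(x)\vec n(x))\|_{L^2_x(S)} \le C\|u\cdot\vec n\|_{L^2(\partial^+ S^h)} \le Ch^{1/2}\|u\|_{W^{1,2}(S^h)},
$$
up to the Jacobian of the map $x\mapsto z^+$ which is bounded above and below under {\bf (H1)}. For the second (integral) term, since the inner integral is over an interval of length at most $C_2 h$, by Cauchy--Schwarz it is bounded pointwise by $Ch^{1/2}\big(\int_{-g_1^h(x)}^{g_2^h(x)}|\partial_{\vec n}(u\cdot\vec n)(x+s\vec n(x))|^2~\mathrm{d}s\big)^{1/2}$, and by Lemma~\ref{trivial}(iii) this is at most $Ch^{1/2}\big(\int_{-g_1^h(x)}^{g_2^h(x)}|D(u)(x+s\vec n(x))|^2~\mathrm{d}s\big)^{1/2}$. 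Squaring, integrating over $S$, and using the change of variables $z=x+s\vec n(x)$ (whose Jacobian is bounded under {\bf (H1)}) gives a contribution $\le Ch\,\|D(u)\|_{L^2(S^h)}^2 \le Ch\,\|u\|_{W^{1,2}(S^h)}^2$. Combining the two estimates via the triangle inequality in $L^2(S)$ yields $\|\bar u\cdot\vec n\|_{L^2(S)}\le Ch^{1/2}\|u\|_{W^{1,2}(S^h)}$, as claimed.

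I do not expect any serious obstacle here; this is a soft estimate. The only points requiring mild care are: (a) keeping track of the Jacobians of the normal-coordinate parametrization and of the boundary parametrization $x\mapsto x+g_2^h(x)\vec n(x)$, both of which are uniformly comparable to $1$ under {\bf (H1)} for $h$ small (since $g_i^h\sim h$ and $|\nabla g_i^h|\lesssim h$, so the relevant Jacobian factors are $1+O(h)$); and (b) noting that $\partial_{\vec n}$ in the identity above differentiates $u$ with $\vec n(x)$ held fixed, so it coincides with $(\partial_{\vec n}u)\cdot\vec n = D(u)\vec n\cdot\vec n$ and Lemma~\ref{trivial}(iii) applies verbatim. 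If one instead used $\partial^-S^h$, the same argument works by integrating from $t=-g_1^h(x)$ and invoking the analogue of Lemma~\ref{trivial}(iv) there.
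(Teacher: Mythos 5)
Your proof is correct and follows essentially the same route as the paper: the fundamental theorem of calculus along the normal, the tangency condition handled through Lemma \ref{trivial} (iv) (equivalently (i) plus the uniform trace theorem), the bound $|\partial_{\vec n}(u\cdot\vec n)|\le |D(u)|$ from (iii), and a Cauchy--Schwarz/Jacobian argument to integrate over $S^h$. The only cosmetic difference is that the paper estimates $|u\cdot\vec n|$ pointwise in $t$ and then invokes the trace theorem directly, whereas you average first and quote (iv) as a black box; the content is identical.
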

\begin{proof}
By Lemma \ref{trivial} (iv) and (i), for every $z=x+t\vec n(x)\in S^h$ we obtain:
\begin{equation*}
\begin{split}
|u(x+&t\vec n(x))\cdot\vec n(x)|^2  \leq 
\left(|u(x + g_2^h(x)\vec n(x))\cdot \vec n(x)|
+ \int_{-g_1^h(x)}^{g_2^h(x)}|D(u)|\right)^2\\
& \leq C \cdot \left|u(x +  g_2^h(x)\vec n(x))\cdot \left(\vec n(x) 
- \vec n^h(x+g_2^h(x)\vec n(x))\right)\right|^2
+ Ch \int_{-g_1^h(x)}^{g_2^h(x)}|D(u)|^2\\
& \leq Ch^2  |u(x + g_2^h(x)\vec n(x))|^2 + Ch  \int_{-g_1^h(x)}^{g_2^h(x)}|D(u)|^2.
\end{split}
\end{equation*}
Hence by Theorem \ref{th_uniform_trace}:
\begin{equation}\label{help}
\begin{split}
\|\bar{u}\cdot\vec n\|_{L^2(S)}^2  & \leq \frac{C}{h} 
\int_S\int_{-g_1^h(x)}^{g_2^h(x)} |u(x + t\vec n(x))\cdot\vec n(x)|^2~\mbox{d}t~\mbox{d}x\\
& \leq \frac{C}{h} \left(h^3 \|u\|_{L^2(\partial S^h)}^2 + h^2 \|D(u)\|_{L^2(S^h)}^2\right) 
\leq C h  \|\nabla u\|_{L^2(S^h)}^2.
\end{split}
\end{equation}
\end{proof}

The next, key estimate, is on the gradient of $\bar{u}\cdot\vec n$. It is obtained
using the divergence theorem on the surface $S$:

\begin{lemma}\label{lem3}
Assume {\bf (H1)} and let $u\in W^{1,2}(S^h,\mathbf{R}^n)$ satisfy $u\cdot\vec n^h = 0$
on $\partial^+ S^h$. Then:
\begin{equation*}
\begin{split}
\|\nabla (\bar{u}\cdot\vec n)\|_{L^2(S)}  + \|R\vec n\|_{L^2(S)} & \leq
C\left( \|\bar{u}\|_{L^2(S)} + \|u\|_{W^{1,2}(S^h)}  
+ h^{-1/2} \|D(u)\|_{L^2(S^h)}\right) \\
& \quad + C\left(h^{-1} \|u\|_{W^{1,2}(S^h)}\cdot\|D(u)\|_{L^2(S^h)}\right)^{1/2}.
\end{split}
\end{equation*}
\end{lemma}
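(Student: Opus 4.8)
\textbf{Proof strategy for Lemma \ref{lem3}.}

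The plan is to test the defining relation for the normal component of $\bar u$ against a tangent vector field on $S$ and integrate by parts, using the divergence theorem on the closed manifold $S$ so that no boundary terms appear. Concretely, for a fixed tangent vector field $\phi$ on $S$ I would examine the integral $\int_S (\bar u\cdot\vec n)\,\mathrm{div}_S\phi$ and rewrite it, via integration by parts, as $-\int_S \nabla(\bar u\cdot\vec n)\cdot\phi$. The point is that the left-hand side can be re-expressed in terms of $\nabla\bar u$ and the shape operator $\Pi=\nabla\vec n$, because $\partial_\tau(\bar u\cdot\vec n)=\partial_\tau\bar u\cdot\vec n+\bar u\cdot\partial_\tau\vec n=\partial_\tau\bar u\cdot\vec n+\Pi\bar u\cdot\tau$. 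By Lemma \ref{lem2}, $\partial_\tau\bar u\cdot\vec n$ is, up to the error $Ch^{1/2}\|u\|_{W^{1,2}(S^h)}+Ch^{-1/2}\|D(u)\|_{L^2(S^h)}$, equal to $R_{tan}\tau\cdot\vec n=0$; so the antisymmetric structure of $R$ is what kills the dangerous term. This will yield a ``weak'' bound: $\int_S\nabla(\bar u\cdot\vec n)\cdot\phi$ is controlled by $\|\phi\|_{W^{1,2}(S)}$ times $\big(\|\bar u\|_{L^2(S)}+\|u\|_{W^{1,2}(S^h)}+h^{-1/2}\|D(u)\|_{L^2(S^h)}\big)$. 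This gives an $H^{-1}$-type estimate on $\nabla(\bar u\cdot\vec n)$.

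To upgrade the $H^{-1}$ estimate to the desired $L^2$ estimate I would choose the test field $\phi$ adapted to $\bar u\cdot\vec n$ itself — solving an elliptic problem on $S$, e.g. $\mathrm{div}_S\phi=\Delta_S w$ with $\nabla w=$ (a regularization of) $\nabla(\bar u\cdot\vec n)$, or more simply integrating by parts once more and testing $\nabla(\bar u\cdot\vec n)$ against itself after transferring one derivative. The natural route: write $\|\nabla(\bar u\cdot\vec n)\|_{L^2(S)}^2=\int_S\nabla(\bar u\cdot\vec n)\cdot\nabla(\bar u\cdot\vec n)$, integrate by parts to get $-\int_S(\bar u\cdot\vec n)\Delta_S(\bar u\cdot\vec n)$, and then use the identity above together with an integration by parts in the other factor to move a derivative off $\bar u\cdot\vec n$ and onto the smooth field $R$ (this is exactly where smoothness of $R$ from Theorem \ref{approx} is essential). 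The derivative of $R$ costs a factor controlled by Theorem \ref{approx}(ii), i.e. $h^{-3/2}\|D(u)\|_{L^2(S^h)}$, but it multiplies $\|\bar u\cdot\vec n\|_{L^2(S)}$, producing precisely the square-root term $C(h^{-1}\|u\|_{W^{1,2}(S^h)}\cdot\|D(u)\|_{L^2(S^h)})^{1/2}$ after a Cauchy–Schwarz/Young split against $\|\nabla(\bar u\cdot\vec n)\|_{L^2(S)}$ itself (absorb the resulting $\|\nabla(\bar u\cdot\vec n)\|_{L^2}^2$ into the left side). The term $\|R\vec n\|_{L^2(S)}$ is handled simultaneously: from Lemma \ref{lem2}, $R\vec n = R_{tan}\vec n + (R-R_{tan})\vec n$ and the tangential-normal block of $R$ is, up to $\|\nabla\bar u-R_{tan}\|$, the block $\partial_\tau\bar u\cdot\vec n$, which by the skewness of $R$ equals $-(\bar u\cdot\partial_\tau\vec n)=-\Pi\bar u\cdot\tau$ plus $\partial_\tau(\bar u\cdot\vec n)$; so $\|R\vec n\|_{L^2(S)}$ is bounded by $\|\nabla(\bar u\cdot\vec n)\|_{L^2(S)}+C\|\bar u\|_{L^2(S)}+$ (the Lemma \ref{lem2} error), and hence folds into the same inequality.

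The main obstacle is the endpoint in the integration-by-parts step: one must ensure that after moving a derivative onto $R$ the remaining boundary-free integral genuinely closes, i.e. that the bad term is of the form (large constant)$\times$(small quantity), with the ``small quantity'' being $\|\bar u\cdot\vec n\|_{L^2(S)}$ rather than $\|\nabla(\bar u\cdot\vec n)\|_{L^2(S)}$ — otherwise one cannot absorb it on the left. Getting this requires carefully arranging the pairing so that the $\nabla R$ factor meets $\bar u\cdot\vec n$ and not its gradient; this is the place where the precise algebraic structure (skewness of $R$, the identity $\partial_\tau\vec n\cdot\vec n=0$, and the fact that $\Pi$ is tangential and symmetric) must be exploited, and it is also where the hypotheses {\bf (H1)} and the boundary condition on $\partial^+S^h$ (entering only through Lemmas \ref{lem1}, \ref{lem2} and \ref{trivial}) do their work. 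The rest is routine: collect the error terms, apply Young's inequality with a small parameter to absorb $\frac12\|\nabla(\bar u\cdot\vec n)\|_{L^2(S)}^2$, and take square roots.
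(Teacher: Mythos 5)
Your operative argument (second and third paragraphs) is essentially the paper's proof, just organized differently: the paper sets $a=\nabla(\bar u\cdot\vec n)$, $b=\vec n R_{tan}$ and uses the identity $\|a\|^2+\|b\|^2=\|a-b\|^2+2\langle a,b\rangle$, bounding $\|a-b\|$ by Lemma \ref{lem2} plus $C\|\bar u\|_{L^2(S)}$, and bounding $\langle a,b\rangle$ by the divergence theorem on the closed surface $S$, so that $\|R\|_{W^{1,2}(S)}$ (Theorem \ref{approx}) meets $\|\bar u\cdot\vec n\|_{L^2(S)}$ (Lemma \ref{lem1}), which yields the square-root term; your expansion of $\|\nabla(\bar u\cdot\vec n)\|_{L^2}^2$, one integration by parts, and Young absorption is an algebraically equivalent arrangement of the same three ingredients, and your reduction of $\|R\vec n\|$ to $\|\nabla(\bar u\cdot\vec n)\|+C\|\bar u\|+{}$(Lemma \ref{lem2} error) is exactly the role the identity plays for $\|b\|$.

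One claim in your first paragraph is false and should be removed: $R_{tan}\tau\cdot\vec n\neq 0$ in general. Skew-symmetry of $R$ only kills the normal--normal entry $\vec n\cdot R\vec n$; the entries $\vec n\cdot R\tau$ for tangential $\tau$ are precisely (up to sign) the components of $R\vec n$, i.e.\ the very quantity the lemma must control. If that claim were true, $\nabla(\bar u\cdot\vec n)=\vec n\,\nabla\bar u+\Pi\bar u$ would be bounded in $L^2$ directly by Lemma \ref{lem2} and $\|\bar u\|$, and the lemma would hold without the term $\left(h^{-1}\|u\|_{W^{1,2}(S^h)}\|D(u)\|_{L^2(S^h)}\right)^{1/2}$, which the example of Section \ref{example} shows cannot be dispensed with. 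Fortunately your actual proof in the later paragraphs does not use this claim (indeed it treats $\vec n R_{tan}$ as a nontrivial quantity handled by the integration by parts), so the argument closes; also note that the identity $\partial_\tau\bar u\cdot\vec n=\partial_\tau(\bar u\cdot\vec n)-\Pi\bar u\cdot\tau$ is just the product rule, not a consequence of skewness, which enters only through $R\vec n\cdot\vec n=0$.
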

\begin{proof}
First note that $\|R\vec n\|_{L^2(S)} = \|\vec n^T R_{tan}\|_{L^2(S)}$,
since $\vec n R\vec n=0$ and $R\in so(n)$.
To prove the desired estimate we use the Hilbert space identity:
$$\|a\|^2 + \|b\|^2 = \|a - b\|^2 + 2\langle a,b\rangle$$
with $a= \nabla (\bar{u}\cdot\vec n)$ and $ b=\vec n^T R_{tan}$.

Integration by parts shows that:
\begin{equation}\label{est3}
\begin{split}
\langle a,b\rangle & =\left|\int_S (\vec n R_{tan})\cdot\nabla(\bar{u}\cdot\vec n)\right|
\leq C \|\bar{u}\cdot\vec n\|_{L^2(S)}
\left(\|R\|_{L^2(S)} + \|\nabla(\vec n R_{tan})\|_{L^2(S)}\right) \\
& \leq C \|\bar{u}\cdot\vec n\|_{L^2(S)}\|R\|_{W^{1,2}(S)}\\
& \leq C \|\bar{u}\cdot\vec n\|_{L^2(S)}
\left(h^{-3/2} \|D(u)\|_{L^2(S^h)} + h^{-1/2} \|\nabla u\|_{L^2(S^h)}
\right)\\
& \leq C h^{-1} \|u\|_{W^{1,2}(S^h)} \cdot\|D(u)\|_{L^2(S^h)}
+ C \|u\|_{W^{1,2}(S^h)}^2,
\end{split}
\end{equation}
where we applied the divergence theorem, Theorem \ref{approx} and Lemma \ref{lem1}.

On the other hand $a=\vec n^T\nabla\bar{u} + \bar{u}\cdot \nabla\vec n$, so by 
Lemma \ref{lem2}:
\begin{equation}\label{est4}
\begin{split}
\|a-b\| & \leq C \left(\|\bar{u}\|_{L^2(S)} 
+ \|\nabla \bar{u} - R_{tan}\|_{L^2(S)}\right)\\ 
& \leq C\|\bar{u}\|_{L^2(S)} + C h^{1/2} \|u\|_{W^{1,2}(S^h)}
+ C h^{-1/2} \|D(u)\|_{L^2(S^h)}.
\end{split}
\end{equation}
Combining (\ref{est3}) and (\ref{est4}) proves the result.
\end{proof}

Finally, in presence of the stronger condition {\bf (H2)}, 
we have an additional bound:

\begin{lemma}\label{ass_h2}
Assume {\bf (H2)} and let $u\in W^{1,2}(S^h,\mathbf{R}^n)$, $u\cdot \vec n^h = 0$
on $\partial S^h$. Then:
$$\frac{1}{h}\int_S |\bar{u}\cdot \nabla (g_1^h + g_2^h)| 
\leq C h^{1/2}\|u\|_{W^{1,2}(S^h)} + C h^{-1/2} \|D(u)\|_{L^2(S^h)}.$$
\end{lemma}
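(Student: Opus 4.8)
The idea is to reduce the estimate to Lemma \ref{trivial}(v) by replacing the normal average $\bar u(x)$ by the traces of $u$ on the two components $\partial^\pm S^h$ of $\partial S^h$. First I would record that {\bf (H2)} gives $|\nabla g_i^h(x)|\le Ch$ for small $h$ (since $h^{-1}g_i^h\to g_i$ in $\mathcal C^1$, hence $h^{-1}\nabla g_i^h\to\nabla g_i$ uniformly). Thus $\nabla(g_1^h+g_2^h)$ is already of order $h$, and the point of the lemma is to exhibit an \emph{extra} smallness coming from the thinness of $S^h$ together with the gain in Lemma \ref{trivial}(v), which encodes the boundary condition $u\cdot\vec n^h=0$ on all of $\partial S^h$.

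Write $\bar u\cdot\nabla(g_1^h+g_2^h)=\bar u\cdot\nabla g_1^h+\bar u\cdot\nabla g_2^h$ and, inside the two summands, replace $\bar u(x)$ by the traces $u(x-g_1^h(x)\vec n(x))$ and $u(x+g_2^h(x)\vec n(x))$ respectively. By the fundamental theorem of calculus along the normal segment $t\mapsto x+t\vec n(x)$ one has, for each fixed endpoint, $|\bar u(x)-u(x\pm g_i^h(x)\vec n(x))|\le\int_{-g_1^h(x)}^{g_2^h(x)}|\partial_{\vec n}u(x+t\vec n(x))|\,\mathrm dt\le\int_{-g_1^h(x)}^{g_2^h(x)}|\nabla u(x+t\vec n(x))|\,\mathrm dt$. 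Hence the total error made in this replacement is pointwise bounded by $Ch\int_{-g_1^h(x)}^{g_2^h(x)}|\nabla u|\,\mathrm dt$. Integrating over $S$, applying Cauchy--Schwarz in $t$, using $|S^h|\le Ch$ and the (uniformly two-sided bounded) change of variables between $S^h$ and $S\times(-g_1^h,g_2^h)$, these error terms contribute at most $Ch^{3/2}\|\nabla u\|_{L^2(S^h)}$ to $\int_S|\bar u\cdot\nabla(g_1^h+g_2^h)|$, i.e. $Ch^{1/2}\|u\|_{W^{1,2}(S^h)}$ after dividing by $h$.

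It remains to bound the main term $\int_S|u(x-g_1^h(x)\vec n(x))\cdot\nabla g_1^h(x)+u(x+g_2^h(x)\vec n(x))\cdot\nabla g_2^h(x)|\,\mathrm dx$. Estimating the $L^1(S)$ norm by the $L^2(S)$ norm (since $S$ is compact) and invoking Lemma \ref{trivial}(v) directly, the integrand has $L^2(S)$ norm at most $C\big(h^{1/2}\|D(u)\|_{L^2(S^h)}+h^{3/2}\|u\|_{W^{1,2}(S^h)}\big)$ (using $\sqrt{a+b}\le\sqrt a+\sqrt b$); dividing by $h$ yields exactly $Ch^{-1/2}\|D(u)\|_{L^2(S^h)}+Ch^{1/2}\|u\|_{W^{1,2}(S^h)}$. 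Adding the two contributions gives the claim. The only genuine content is Lemma \ref{trivial}(v) itself (which in turn uses the second-order expansion of $\vec n^h$ from Lemma \ref{trivial}(ii) and the pointwise bound (iii)); the remaining difficulty, and the place to be careful, is simply making the powers of $h$ from the thinness, from the Cauchy--Schwarz steps, and from the bound $|\nabla g_i^h|\le Ch$ line up correctly.
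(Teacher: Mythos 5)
Your proposal is correct and follows essentially the same route as the paper: replace $\bar u$ by the boundary traces (controlling the error via the fundamental theorem of calculus along the normal, Cauchy--Schwarz and $|\nabla g_i^h|\le Ch$, which is exactly the paper's bound $\|u-\bar u\pi\|_{L^1(\partial S^h)}\le Ch^{1/2}\|\nabla u\|_{L^2(S^h)}$), and then handle the main term by Lemma \ref{trivial}(v) together with the embedding $L^2(S)\subset L^1(S)$. The bookkeeping of the powers of $h$ matches the paper's estimate.
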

\begin{proof}
We have:
\begin{equation*}
\begin{split}
\frac{1}{h} \int_S |\bar{u} &\cdot \nabla (g_1^h + g_2^h)|\\
&\leq \frac{1}{h} \int_S |u(x-g_1^h(x)\vec n(x))\cdot\nabla g_1^h(x) +
u(x+g_2^h(x)\vec n(x))\cdot\nabla g_2^h(x)| ~\mbox{d}x\\
&\quad + C\|u - \bar{u}\pi\|_{L^1(\partial S^h)}\\
&\leq Ch^{-1/2} \|D(u)\|_{L^2(S)} + C h^{1/2} \|u\|_{W^{1,2}(S^h)}
+ C h^{1/2}\|\nabla u\|_{L^{2}(S^h)}.
\end{split}
\end{equation*}
The last inequality follows from Lemma \ref{trivial} (v)
and from an easy bound: $\|u - \bar{u}\pi\|_{L^1(\partial S^h)}\leq
C h^{1/2}\|\nabla u\|_{L^{2}(S^h)}$.
\end{proof}

\section{A proof of main theorems}
\label{section_endproof}

In this section we will prove the uniform Korn's estimate:
\begin{equation}\label{korn_proof}
 \|u\|_{W^{1,2}(S^h)}\leq C \|D(u)\|_{L^2(S^h)},
\end{equation}
under the angle constraints (\ref{f1}) or (\ref{f2}).
We argue by contradiction;
assume thus that (\ref{korn_proof}) is not valid, for any uniform constant $C$.
Hence, there exist sequences $h_n\longrightarrow 0$ and 
$u^{h_n}\in W^{1,2}(S^{h_n})$ (for simplicity we will write $h$ 
instead of $h_n$) such that the assumptions of
Theorem \ref{th1} or \ref{th2} are satisfied, but:
\begin{equation}\label{contra}
h^{-1/2} \|u^h\|_{W^{1,2}(S^h)} = 1 \quad \mbox{ and } \quad 
h^{-1/2} \|D(u^h)\|_{L^2(S^h)} \longrightarrow 0 \qquad 
\mbox{ as } h\longrightarrow 0.
\end{equation}
For the proof of Theorem \ref{th1} we will assume that $u^h\cdot \vec n^h=0$ 
on $\partial^+ S^h$. The case of the tangency condition on $\partial^- S^h$
is proved exactly the same.

Notice that (\ref{contra}) immediately gives, through Lemmas \ref{lem1},
\ref{lem2} and \ref{lem3}, that:
\begin{eqnarray}\label{SM1}
& \displaystyle\lim_{h\to 0} \left(\|\bar{u}^h\cdot \vec n\|_{L^2(S)}
+ \|\nabla \bar{u}^h - R_{tan}^h\|_{L^2(S)}\right) =0,\label{sm1}\\
& \displaystyle\lim_{h\to 0} \left(\|\nabla (\bar{u}^h\cdot\vec n)\|_{L^2(S)}
+ \|R^h\vec n\|_{L^2(S)}\right) \leq C \limsup_{h\to 0} \|\bar{u}^h\|_{L^2(S)}.
\label{SM2}
\end{eqnarray}
Also, Lemma \ref{ass_h2} implies that under the assumption {\bf (H2)}:
\begin{equation}\label{SM3}
\lim_{h\to 0} \int_S |\bar{u}^h\cdot\nabla (g_1 + g_2)|=0,
\end{equation}
where we used that the sequence $\bar{u}^h$ is bounded in $L^1(S)$,
again in view of (\ref{contra}).

A contradiction will be derived in several steps.
In particular, the tangential component of the average $\bar{u}$:
$$\bar{u}^h_{tan}(x) = \bar{u}^h(x) - (\bar{u}^h\cdot \vec n) 
\cdot \vec n(x) \in T_x S.$$
will be estimated using the Korn inequality on hypersurfaces
(see Appendix C). The conditions (\ref{f1})
and (\ref{f2}) assumed in Theorems 
\ref{th1} and \ref{th2} will be used in full (not just for rotations 
as in Theorem \ref{th_very_weak}).

\bigskip

\noindent {\bf Proof of Theorems \ref{th1} and \ref{th2}.}

\noindent {\bf 1.} Applying Theorem \ref{th_surface_korn} to each tangent vector
field $\bar{u}^h_{tan}$, we obtain a sequence $v_0^h\in\mathcal{I}(S)$
such that:
\begin{equation*}
\|\bar{u}^h_{tan} - v_0^h\|_{W^{1,2}(S)} 
\leq C\|D(\bar{u}^h_{tan})\|_{L^2(S)}.
\end{equation*}
For every $x\in S$ and $\tau\in T_x S$ there holds:
\begin{equation*}
\begin{split}
|\partial_\tau\bar{u}^h_{tan}(x) \cdot\tau| &= |\partial_\tau\bar{u}^h(x) \cdot\tau
- (\bar{u}^h\cdot\vec n)(x)\cdot \partial_\tau\vec n(x)|\\
&\leq |\partial_\tau\bar{u}^h(x) - R^h(x)\tau| + C |(\bar{u}^h\cdot\vec n)(x)|,
\end{split}
\end{equation*}
as $R^h(x) \in so(n)$.
Thus, by (\ref{SM1}):
\begin{equation*}
\|D(\bar{u}^h_{tan})\|_{L^2(S)} \leq C\left(\|\nabla\bar{u}^h 
- R^h_{tan}\|_{L^2(S)} + \|\bar{u}^h\cdot\vec n\|_{L^2(S)}\right)
\longrightarrow 0 \qquad \mbox{ as } h\longrightarrow 0.
\end{equation*}
Therefore:
\begin{equation}\label{due.5}
\lim_{h\to 0}\|\bar{u}^h_{tan} - v_0^h\|_{W^{1,2}(S)}=0. 
\end{equation}

\medskip

{\bf 2.} Let $\mathbb P$ be the orthogonal projection (with respect to 
the $L^2(S)$ norm) of the space $\mathcal{I}(S)$
onto its subspace $V$, which we take to be the whole  $\mathcal{I}(S)$
in case of Theorem \ref{th1} and  $\mathcal{I}_{g_1, g_2}(S)$
in case of Theorem \ref{th2}.
Call $v_1^h = \mathbb{P}v_0^h \in V$ and $v_2^h = v_0^h - v_1^h\in V^\perp$.
In both cases (\ref{dist_angle}) implies:
\begin{equation}\label{tre}
\|u^h\|_{L^2(S^h)}\leq C \|u^h - v_1^h\pi\|_{L^2(S^h)}.
\end{equation}
 
We now prove that:
\begin{equation}\label{nove}
\lim_{h\to 0}  \|v_2^h\|_{L^2(S)} = 0.
\end{equation}
In case of Theorem \ref{th1}, when $V^\perp = \{0\}$, (\ref{nove}) is trivial,
so we concentrate on the case of Theorem \ref{th2}. Notice that then,
(\ref{SM3}) and (\ref{due.5}) yield:
\begin{equation}\label{h2_case}
\begin{split}
& \int_S |v_2^h\cdot\nabla(g_1 + g_2)|  = 
\int_S |v_0^h\cdot\nabla(g_1 + g_2)| \\ 
& \qquad \leq
C \|\bar{u}^h_{tan} - v_0^h\|_{L^1(S)} + C \int_S |\bar{u}^h\cdot\nabla(g_1 + g_2)|
\longrightarrow 0 \qquad \mbox{ as } h\longrightarrow 0.
\end{split}
\end{equation}
Since all norms in the finitely dimensional space $V^\perp$ are equivalent, we have:
\begin{equation}\label{dieci}
\|v_2^h\|_{L^{2}(S)} \leq C \int_S |v_2^h\cdot\nabla(g_1 + g_2)|.
\end{equation}
Indeed, the right hand side of (\ref{dieci}) provides a norm on the space 
in question.
Now, (\ref{h2_case}) and (\ref{dieci}) clearly imply (\ref{nove}).

\medskip

{\bf 3.} Using the Poincar\'e inequality 
on each segment $[-g_1^h(x), g_2^h(x)]$, and by (\ref{contra}):
\begin{equation}\label{7_help}
h^{-1/2}\|\bar{u}^h\pi - u^h\|_{L^2(S^h)}
\leq C h^{1/2} \|\nabla u^h\|_{L^2(S^h)}
\longrightarrow 0 \qquad \mbox{ as } h\longrightarrow 0.
\end{equation}
We now obtain convergence to $0$ of various quantities:
\begin{eqnarray}
&& h^{-1/2}\|\bar{u}_{tan}^h\pi - u^h\|_{L^2(S^h)} 
\leq h^{-1/2} \|\bar{u}^h\pi - u^h\|_{L^2(S^h)} +
C  \|\bar{u}^h\cdot\vec n\|_{L^2(S)} \longrightarrow 0 \nonumber\\
&& \qquad\qquad\qquad\qquad\qquad \qquad\qquad\quad
\mbox{ by (\ref{7_help}) and (\ref{SM1})},\nonumber\\
&&  h^{-1/2}\|v_0^h\pi - v_1^h\pi\|_{L^2(S^h)} \longrightarrow 0 
\qquad \mbox{ by  (\ref{nove})},\nonumber\\
&&  h^{-1/2}\|u^h - v_1^h\pi\|_{L^2(S^h)} \longrightarrow 0 
\qquad \mbox{ by  (\ref{due.5}) and convergences above}.\nonumber
\end{eqnarray}
Consequently, by (\ref{tre}):
\begin{equation}\label{ml2}
\lim_{h\to 0} h^{-1/2}\|u^h\|_{L^2(S^h)} = 0.
\end{equation}
Hence:
\begin{eqnarray}
&& \|\bar{u}^h\|_{L^2(S)} \longrightarrow 0, \label{ml4}\\
&& \|\nabla(\bar{u}^h\cdot\vec n)\|_{L^2(S)} + \|R^h\vec n\|_{L^2(S)}
 \longrightarrow 0 \qquad \mbox{ by  (\ref{SM2}) and (\ref{ml4})}, \label{ml5}\\
&&  \|v_0^h\|_{L^2(S)} \longrightarrow 0 
\qquad \mbox{ by  (\ref{due.5}) and (\ref{ml4})}. \label{ml6}
\end{eqnarray}
Because of the equivalence of all norms on the finitely dimensional space
$\mathcal{I}(S)$, (\ref{ml6}) implies:
\begin{equation}\label{ml7}
\lim_{h\to 0} \|v_0^h\|_{W^{1,2}(S)} = 0.
\end{equation}
Now, we may estimate the quantity $h^{-1/2}\|\nabla u^h\|_{L^2(S^h)}$
by the following norms: $h^{-1/2}\|\nabla u^h - R^h\pi\|_{L^2(S^h)}$,
$\|R^h\vec n\|_{L^2(S)}$, $\|R^h_{tan} - \nabla \bar{u}^h\|_{L^2(S)}$,
$\|\nabla(\bar{u}^h\cdot\vec n)\|_{L^2(S)}$,
$\|\nabla \bar{u}^h_{tan} - \nabla v_0^h\|_{L^2(S)}$,
$\|\nabla v_0^h\|_{L^2(S)}$, and use 
Theorem \ref{approx}, (\ref{ml5}),  (\ref{SM1}),  (\ref{due.5}) and  (\ref{ml7})
to conclude that:
\begin{equation*}
\lim_{h\to 0} h^{-1/2}\|\nabla u^h\|_{L^2(S^h)} = 0.
\end{equation*}
Together with (\ref{ml2}) this contradicts (\ref{contra}).
\endproof

\section{Estimates without Killing fields}
\label{section_weak_bound}

In this section we prove Theorem \ref{th_very_weak}.
The first step is to give a bound for the distance of $u$ from the generators 
of rigid motions in $\mathbf{R}^n$.
This follows from Theorem \ref{approx}
and the uniform Poincar\'e inequality in Theorem \ref{th_uniform_poincare}:

\begin{lemma}\label{cor4.2}
Assume {\bf (H1)}. For every $u\in W^{1,2}(S^h,\mathbf{R}^n)$ there exists a linear function 
$v(z) = Az + b$, $A\in so(n)$, $b\in\mathbf{R}^n$, such that:
$$\|u-v\|_{W^{1,2}(S^h)} \leq Ch^{-1}\|D(u)\|_{L^2(S^h)}.$$
\end{lemma}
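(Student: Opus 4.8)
The plan is to use the approximating matrix field $R:S\to so(n)$ provided by Theorem~\ref{approx}, upgrade it to a genuine affine map $z\mapsto Az+b$ by averaging, and then control the difference $u-(Az+b)$ by the uniform Poincar\'e inequality. First, I would apply Theorem~\ref{approx} to obtain the smooth $R:S\to so(n)$ with
\[
\|\nabla u - R\pi\|_{L^2(S^h)} \leq C\|D(u)\|_{L^2(S^h)}, \qquad
\|\nabla R\|_{L^2(S)} \leq Ch^{-3/2}\|D(u)\|_{L^2(S^h)}.
\]
The second bound says $R$ is nearly constant on $S$: since $S$ is a fixed compact manifold, the Poincar\'e inequality on $S$ yields a constant $A\in so(n)$ (say the average of $R$ over $S$) with $\|R - A\|_{L^2(S)} \leq C\|\nabla R\|_{L^2(S)} \leq Ch^{-3/2}\|D(u)\|_{L^2(S^h)}$. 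Transporting this to $S^h$ costs a factor $h^{1/2}$ from the thin-direction integration (the shell has thickness $\sim h$), so $\|R\pi - A\|_{L^2(S^h)} \leq Ch^{-1}\|D(u)\|_{L^2(S^h)}$, and hence
\[
\|\nabla u - A\|_{L^2(S^h)} \leq \|\nabla u - R\pi\|_{L^2(S^h)} + \|R\pi - A\|_{L^2(S^h)} \leq Ch^{-1}\|D(u)\|_{L^2(S^h)}.
\]

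Next, I would set $v(z) = Az + b$ where $b$ is chosen so that $\int_{S^h}(u-v) = 0$. Then $\nabla(u-v) = \nabla u - A$ and, since $A$ is skew, $D(v) = 0$, so the displayed estimate above reads $\|\nabla(u-v)\|_{L^2(S^h)} \leq Ch^{-1}\|D(u)\|_{L^2(S^h)}$. Now I invoke the uniform Poincar\'e inequality in thin domains (Theorem~\ref{th_uniform_poincare} in Appendix~D), which gives $\|w\|_{L^2(S^h)} \leq C\|\nabla w\|_{L^2(S^h)}$ for $w$ of mean zero on $S^h$, with $C$ uniform in $h$; applying it to $w = u-v$ yields $\|u-v\|_{L^2(S^h)} \leq C\|\nabla(u-v)\|_{L^2(S^h)} \leq Ch^{-1}\|D(u)\|_{L^2(S^h)}$. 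Combining the $L^2$ and gradient estimates gives the claimed bound $\|u-v\|_{W^{1,2}(S^h)} \leq Ch^{-1}\|D(u)\|_{L^2(S^h)}$.

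\textbf{Main obstacle.} The delicate point is bookkeeping the powers of $h$ in passing from estimates on $S$ to estimates on $S^h$ and back: the bound (ii) of Theorem~\ref{approx} already carries the singular factor $h^{-3/2}$, and one must check that the $h^{1/2}$ gained by integrating a function of $x\in S$ over the thin slices $(-g_1^h(x), g_2^h(x))$ of length $\sim h$ exactly compensates to leave the stated $h^{-1}$ and no worse. One should also verify that the uniform Poincar\'e constant is genuinely independent of $h$ — but this is precisely what Theorem~\ref{th_uniform_poincare} is asserted to provide, proved "following the same argument" as Theorem~\ref{approx}, so I would simply cite it. No Killing fields or angle conditions enter here, which is why the estimate degenerates like $h^{-1}$ rather than being uniform.
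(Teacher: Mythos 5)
Your proposal is correct and follows essentially the same route as the paper: take $A$ to be the average of $R$ over $S$, use the Poincar\'e inequality on $S$ together with Theorem~\ref{approx}(ii) to get $\|\nabla u - A\|_{L^2(S^h)}\leq Ch^{-1}\|D(u)\|_{L^2(S^h)}$, and then apply the uniform Poincar\'e inequality of Theorem~\ref{th_uniform_poincare} to $u-Az$ to produce $b$ and the $L^2$ bound. Your choice of $b$ as the mean of $u-Az$ is fine, since the mean minimizes the $L^2$ distance to constants, so the conclusion of Theorem~\ref{th_uniform_poincare} transfers to that choice.
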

\begin{proof}
Recall the results of Theorem \ref{approx} and define:
$$A=\fint_S R(x) ~\mbox{d}x \in so(n).$$
By Theorem \ref{approx} and the Poincar\'e inequality on $S$, we obtain:
\begin{equation}\label{1.dieci}
\begin{split}
\int_{S^h} |\nabla u - A|^2 & \leq C\left\{\int_{S^h}|\nabla u - R\pi|^2 + 
h\int_S |R(x) - A|^2 ~\mbox{d}x \right\} \\ & \leq
C\left\{\int_{S^h}|D(u)|^2 + h  \int_S |\nabla R|^2 \right\}
\leq C h^{-2}\int_{S^h}|D(u)|^2.
\end{split}
\end{equation}
We now apply 
Theorem \ref{th_uniform_poincare} to the function $u(z) - Az$, by which
for some $b\in\mathbf{R}^n$ there holds:
\begin{equation}\label{1.undici}
\int_{S^h}|u(z) - A z -b|^2 ~\mbox{d}z \leq C \int_{S^h} |\nabla u - A|^2
\leq Ch^{-2}\int_{S^h} |D(u)|^2. 
\end{equation}
Now (\ref{1.dieci}) and (\ref{1.undici}) imply the result.
\end{proof}

\bigskip

\noindent {\bf Proof of Theorem \ref{th_very_weak}.}
The proof of part (i) will be carried out assuming that 
$u\cdot\vec n^h=0$ on $\partial^+ S^h$. 
For the other case ($u\cdot\vec n^h=0$ on $\partial^- S^h$) the argument
is the same.

\medskip
\noindent 
{\bf 1.} We argue by contradiction. If (\ref{very_weak}) was not true, then there would be 
sequences $h_n\longrightarrow 0$ and $u^{h_n}\in W^{1,2}(S^{h_n})$
satisfying the conditions in (i) or (ii) and such that:
\begin{equation}\label{dodici}
h^{-1/2} \|u^h\|_{W^{1,2}(S^h)} = 1,
\end{equation}
\begin{equation}\label{1.tredici}
h^{-3/2} \|D(u^h)\|_{L^2(S^h)} \longrightarrow 0 \qquad \mbox{ as } h\longrightarrow 0
\end{equation}
(to simplify the notation, we write $h$ instead of $h_n$). By Lemma \ref{cor4.2},
there exists a sequence $v^h(z)= A^h z + b^h$, $A^h\in so(n)$, $b^h\in\mathbf{R}^n$, such that:
\begin{equation}\label{quattordici}
h^{-1/2} \|u^h - v^h\|_{W^{1,2}(S^h)}\longrightarrow 0 \qquad 
\mbox{ as } h\longrightarrow 0.
\end{equation}
Because of (\ref{dodici}), the sequence $h^{-1/2} v^h$ is bounded in 
$W^{1,2}(S^h)$ and so, without loss of generality, we may assume that:
\begin{equation}\label{quindici}
A^h\longrightarrow A\in so(n), \qquad
b^h\longrightarrow b\in \mathbf{R}^n \qquad \mbox{ as } h\longrightarrow 0.
\end{equation}
Moreover, by (\ref{dodici}) and (\ref{quattordici}):
$$\lim_{h\to 0} h^{-1/2}\|v^h\|_{W^{1,2}(S^h)} 
= \lim_{h\to 0} h^{-1/2}\|u^h\|_{W^{1,2}(S^h)} = 1,$$
and therefore:
\begin{equation}\label{Ab_nonzero}
|A| + |b| \neq 0.
\end{equation}

\medskip 

{\bf 2.}
We now prove that if {\bf (H1)} holds together with $u^h\cdot\vec n^h=0$ on
$\partial^+ S^h$, then we must have $Ax+b\in\mathcal{R}(S)$. 
Indeed, by Theorem \ref{th_uniform_trace} and Lemma \ref{trivial} (iv):
\begin{equation*}
\begin{split}
&\|v^h\cdot\vec n\|_{L^2(\partial^+ S^h)} \leq \|u^h - v^h\|_{L^2(\partial^+ S^h)}
+ \|u^h\cdot\vec n\|_{L^2(\partial^+ S^h)}\\
&\qquad\qquad
\leq C\left(h^{-1/2}\|u^h - v^h\|_{W^{1,2}(S^h)} + h^{1/2}\|u^h\|_{W^{1,2}(S^h)}\right)
\longrightarrow 0 \quad \mbox{ as } h\longrightarrow 0,
\end{split}
\end{equation*}
where the convergence above follows from (\ref{dodici}) and (\ref{quattordici}). Thus:
\begin{equation*}
\int_S |(Ax+b)\cdot\vec n(x)|^2~\mbox{d}x = \lim_{h\to 0} \int_S |v^h(x)\cdot\vec n (x)|^2 ~\mbox{d}x
= \lim_{h\to 0}\|v^h\cdot\vec n\pi\|_{L^2(\partial^+ S^h)}^2 =0.
\end{equation*}

We now prove that if {\bf (H2)} holds, together with
$u^h\cdot\vec n^h=0$ on $\partial S^h$,
then $Ax+b\in\mathcal{R}_{g_1,g_2}(S)$.
By  Theorem \ref{th_uniform_trace} and Lemma \ref{trivial} (v):
\begin{equation}\label{sedici}
\begin{split}
&\frac{1}{h^2} \int_S|v^h(x+g_2^h(x)\vec n(x))\cdot \nabla g_2^h(x)
+ v^h(x - g_1^h(x)\vec n(x))\cdot \nabla g_1^h(x)|^2 ~\mathrm{d}x\\
&\leq \frac{1}{h^2}\Big\{ Ch^2\|v^h - u^h\|_{L^2(\partial S^h)}^2 \\
&\quad + \int_S|u^h(x+g_2^h(x)\vec n(x))\cdot \nabla g_2^h(x)
+ u^h(x - g_1^h(x)\vec n(x))\cdot \nabla g_1^h(x)|^2 ~\mathrm{d}x\Big\}\\
&\leq \frac{C}{h} \left\{\|v^h - u^h\|_{W^{1,2}(S^h)} + \|D(u^h)\|_{L^2(S^h)}^2 
 + h^2 \|u^h\|_{W^{1,2}(S^h)}^2\right\}
\longrightarrow 0 \qquad \mbox{ as } h\longrightarrow 0,
\end{split}
\end{equation}
where (\ref{quattordici}) with (\ref{1.tredici}) justify the convergence.
Hence, by (\ref{sedici}):
\begin{equation*}
\int_S |(Ax+b) \cdot \nabla(g_1 + g_2)(x)|^2 ~\mbox{d}x
= \lim_{h\to 0} \frac{1}{h^2} \int_S |v^h \cdot \nabla(g_1^h + g_2^h)|^2= 0 
\end{equation*}

\medskip

{\bf 3.} We see that in both cases (i) and (ii) there holds
(using condition (\ref{dist_angle})):
$$\|u^h\|_{L^2(S^h)}\leq C \|u^h - (A\pi (z) + b)\|_{L^2(S^h)}.$$
Thus, by (\ref{quattordici}) and (\ref{quindici}):
\begin{equation*}
\begin{split}
& h^{-1/2}\|u^h\|_{L^2(S^h)} \leq C  h^{-1/2} \|u^h - (A\pi (z) + b)\|_{L^{2}(S^h)} \\
& \qquad \leq C h^{-1/2} \|u^h - v^h\|_{L^{2}(S^h)} 
+ Ch^{-1/2} \|v^h - (A\pi (z) + b)\|_{L^{2}(S^h)}
\longrightarrow 0.
\end{split}
\end{equation*}
We deduce that $\lim_{h\to 0} h^{-1/2}\|v^h\|_{L^2(S^h)}=0$ as well,
which contradicts (\ref{Ab_nonzero}).
\endproof

\section{Appendix A - The Korn-Poincar\'e inequality in a fixed domain}
\label{easy_korn}

In this section $\Omega\subset \mathbf{R}^n$ is a fixed open, bounded domain with Lipschitz
boundary. For $x\in\partial \Omega$, by $\vec n_\Omega(x)$ we denote the outward unit normal
to $\partial\Omega$ at $x$.
We first recall the standard Korn inequality \cite{ciarbookvol1, GS, FJMgeo}:

\begin{theorem}\label{korn_standard}
(i) There holds:
$$\left\{u\in L^2(\Omega,\mathbf{R}^n);~~ D(u)\in L^2(\Omega, M^{n\times n})\right\}
= W^{1,2}(\Omega,\mathbf{R}^n),$$
and the following equivalence of norms:
$$\|u\|_{W^{1,2}(\Omega)} \leq C_\Omega 
\left(\|u\|_{L^2(\Omega)} + \|D(u)\|_{L^2(\Omega)}\right)
\leq C_\Omega^2  \|u\|_{W^{1,2}(\Omega)}.$$
(ii) For every $u\in W^{1,2}(\Omega,\mathbf{R}^n)$ there exists $A\in so(n)$
and $b\in \mathbf{R}^n$ so that:
$$\|u - (Ax+b)\|_{W^{1,2}(\Omega)}\leq  C_\Omega  \|D(u)\|_{L^2(\Omega)}.$$
The constants $C_\Omega$ above depend only on the domain $\Omega$ and not on $u$.
\end{theorem}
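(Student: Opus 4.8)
The statement to prove is Theorem~\ref{korn_standard}, the standard Korn inequality on a fixed bounded Lipschitz domain $\Omega$. The plan is to establish first the equivalence of norms in part (i) and then derive part (ii) by a compactness argument. I would open with the elementary identity that expresses second derivatives of $u$ in terms of first derivatives of the symmetric gradient: for indices $i,j,k$,
\begin{equation*}
\partial_k\partial_j u_i = \partial_j D(u)_{ik} + \partial_k D(u)_{ij} - \partial_i D(u)_{jk}.
\end{equation*}
This shows that if $u\in L^2$ and $D(u)\in L^2$, then $\nabla u\in W^{-1,2}$ has its gradient in $W^{-1,2}$, so by the Ne\v{c}as/Lions lemma (the $W^{-1,2}$ characterization of $L^2$ on a Lipschitz domain) one gets $\nabla u\in L^2$, i.e. $u\in W^{1,2}(\Omega,\mathbf{R}^n)$, together with the quantitative bound
\begin{equation*}
\|u\|_{W^{1,2}(\Omega)} \leq C_\Omega\left(\|u\|_{L^2(\Omega)} + \|D(u)\|_{L^2(\Omega)}\right).
\end{equation*}
The reverse inequality $\|u\|_{L^2} + \|D(u)\|_{L^2} \leq \|u\|_{L^2} + \|\nabla u\|_{L^2} \leq C\|u\|_{W^{1,2}}$ is trivial since $|D(u)|\leq|\nabla u|$ pointwise. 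This proves (i); the Lipschitz regularity of $\partial\Omega$ enters precisely through the Ne\v{c}as lemma, which is the one nontrivial analytic input and the main obstacle in the argument — everything else is bookkeeping.

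For part (ii), I would argue by contradiction and compactness in the standard way. Suppose no such constant exists: then there is a sequence $u_m\in W^{1,2}(\Omega,\mathbf{R}^n)$ with $\|D(u_m)\|_{L^2(\Omega)}\to 0$ but $\inf_{A\in so(n),\,b\in\mathbf{R}^n}\|u_m - (Ax+b)\|_{W^{1,2}(\Omega)} = 1$. Since the affine maps $x\mapsto Ax+b$ with $A\in so(n)$ form a finite-dimensional (hence closed) subspace $\mathcal{R}$ of $W^{1,2}(\Omega,\mathbf{R}^n)$, we may replace $u_m$ by its component orthogonal (or simply choose a near-minimizer and subtract it) to $\mathcal{R}$, so that after normalization $\|u_m\|_{W^{1,2}(\Omega)}=1$ and $u_m$ stays at distance $1$ from $\mathcal{R}$, while still $\|D(u_m)\|_{L^2}\to 0$. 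By Rellich compactness, a subsequence converges strongly in $L^2(\Omega)$ to some $u_\infty$, and $D(u_m)\to 0$ in $L^2$; applying the inequality from part (i) to the differences $u_m - u_{m'}$ shows $(u_m)$ is Cauchy in $W^{1,2}(\Omega)$, so $u_m\to u_\infty$ strongly in $W^{1,2}$ with $D(u_\infty)=0$.

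It remains to identify the kernel: $D(u_\infty)=0$ on a connected open set forces $u_\infty(x)=Ax+b$ for some $A\in so(n)$, $b\in\mathbf{R}^n$. This is the classical fact that infinitesimal rigid displacements are the only fields with vanishing symmetric gradient, and it follows from the second-derivative identity above, which gives $\partial_k\partial_j u_i = 0$ in the distributional sense, hence $u_\infty$ is affine, and $D(u_\infty)=0$ then forces the linear part to be skew. (If $\Omega$ is not connected one applies this on each component and absorbs the finitely many affine pieces; for the theorem as stated one may assume $\Omega$ connected, or note that the statement is used only for the star-shaped cylinders $B_{x,h}$.) Thus $u_\infty\in\mathcal{R}$, contradicting that each $u_m$ — and hence the limit — lies at distance $1$ from $\mathcal{R}$. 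This contradiction proves (ii), with $C_\Omega$ depending only on $\Omega$ through the constant in part (i) and the compactness argument. The one place to be slightly careful is ensuring the normalization and the distance-to-$\mathcal{R}$ condition are compatible, which is why I would work with the quotient $W^{1,2}(\Omega,\mathbf{R}^n)/\mathcal{R}$ and its (finite-codimension-complemented) norm from the outset.
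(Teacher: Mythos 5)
Your proposal is correct, but note that the paper contains no proof of Theorem \ref{korn_standard} to compare against: it is recalled as a known result with references (Ciarlet, Geymonat--Suquet, Friesecke--James--M\"uller), so you have supplied what the paper only cites. Your argument is essentially the standard one from those sources: part (i) via the identity $\partial_k\partial_j u_i = \partial_j D(u)_{ik}+\partial_k D(u)_{ij}-\partial_i D(u)_{jk}$ combined with the Ne\v{c}as/Lions lemma (correctly identified as the only nontrivial input, and the place where Lipschitz regularity of $\partial\Omega$ is used), and part (ii) by Rellich compactness together with the identification of the kernel $\{D(u)=0\}$ with the affine maps $x\mapsto Ax+b$, $A\in so(n)$. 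Your closing remark about working in the orthogonal complement of the finite-dimensional space $\mathcal{R}$ inside the Hilbert space $W^{1,2}(\Omega,\mathbf{R}^n)$ (equivalently the quotient) does resolve the one delicate point, since then the distance to $\mathcal{R}$ equals the norm and the normalization is consistent; connectedness of $\Omega$, implicit in the word ``domain,'' is what makes the kernel identification legitimate, as you note. The instructive contrast is with Appendix B of the paper: for the cylinders $B_{x,h}$ the authors need a Korn constant depending only on $n$, $r/R$ and the Lipschitz constant $L$, and a compactness argument of the kind you use cannot produce that uniformity; this is why Theorem \ref{app_main_korn} is proved quantitatively, by splitting off a harmonic part, using weighted Hardy-type inequalities and the Oleinik--Kondratiev estimate. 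So your soft proof fully covers the fixed-domain statement as written, while the quantitative variant is the one doing the real work in the thin-domain analysis.
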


Notice that Theorem \ref{korn_standard} (ii) implies that for each $u\in W^{1,2}(\Omega,\mathbf{R}^n)$ 
satisfying the orthogonality condition:
$$\int_\Omega u\cdot v = 0 \qquad \forall v\in\mathcal{R}(\Omega) = 
\left\{Ax+b;~~ A\in so(n), b\in\mathbf{R}^n\right\}$$
one has:
$$\|u\|_{W^{1,2}(\Omega)}\leq  C_\Omega  \|D(u)\|_{L^2(\Omega)}.$$
The same is true if we restrict our attention 
to vector fields tangential on $\partial\Omega$.
Define:
$$\mathcal{R}_\partial(\Omega) = 
\left\{v\in\mathcal{R}(\Omega);~~ v\cdot\vec n_\Omega=0 \mbox{ on } 
\partial\Omega\right\}.$$

\begin{theorem}\label{naive_korn}
For every $u\in W^{1,2}(\Omega,\mathbf{R}^n)$ such that $u\cdot\vec n_\Omega=0$ on 
$\partial\Omega$ and:
\begin{equation}\label{6.2.uno}
\int_\Omega u\cdot v = 0 \qquad \forall v\in\mathcal{R}_\partial (\Omega),
\end{equation}
there holds:
$$\|u\|_{W^{1,2}(\Omega)}\leq  C_\Omega \|D(u)\|_{L^2(\Omega)},$$
and the constant $C_\Omega$ depends only on $\Omega$.
\end{theorem}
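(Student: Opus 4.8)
The plan is to argue by contradiction, in the standard manner of Korn-type inequalities on a fixed domain. Suppose the claimed estimate fails. Then there is a sequence $u_k \in W^{1,2}(\Omega,\mathbf{R}^n)$ with $u_k \cdot \vec n_\Omega = 0$ on $\partial\Omega$, satisfying the orthogonality condition (\ref{6.2.uno}), and normalized so that $\|u_k\|_{W^{1,2}(\Omega)} = 1$ while $\|D(u_k)\|_{L^2(\Omega)} \to 0$. The goal is to extract a limit that is a nonzero element of $\mathcal{R}_\partial(\Omega)$ which is also $L^2$-orthogonal to $\mathcal{R}_\partial(\Omega)$, hence zero, a contradiction.

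First I would pass to a subsequence so that $u_k \rightharpoonup u$ weakly in $W^{1,2}(\Omega)$ and, by Rellich compactness, $u_k \to u$ strongly in $L^2(\Omega)$. Since $D(u_k) \to 0$ in $L^2$, the symmetrized gradient of the limit satisfies $D(u) = 0$; it is classical (e.g.\ via Theorem \ref{korn_standard}, or directly) that $D(u) = 0$ on a connected open set forces $u(x) = Ax + b$ for some $A \in so(n)$, $b \in \mathbf{R}^n$, so $u \in \mathcal{R}(\Omega)$. Next I would upgrade the convergence: applying Theorem \ref{korn_standard}(i) to $u_k - u$ gives
\[
\|u_k - u\|_{W^{1,2}(\Omega)} \leq C_\Omega\left(\|u_k - u\|_{L^2(\Omega)} + \|D(u_k - u)\|_{L^2(\Omega)}\right) \longrightarrow 0,
\]
so in fact $u_k \to u$ strongly in $W^{1,2}(\Omega)$. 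Consequently $\|u\|_{W^{1,2}(\Omega)} = 1$, so $u \neq 0$.

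It remains to check that $u \in \mathcal{R}_\partial(\Omega)$ and that $u$ inherits the orthogonality (\ref{6.2.uno}). For the boundary condition, the trace operator $W^{1,2}(\Omega) \to L^2(\partial\Omega)$ is continuous, so $u_k|_{\partial\Omega} \to u|_{\partial\Omega}$ in $L^2(\partial\Omega)$; since each $u_k \cdot \vec n_\Omega = 0$ on $\partial\Omega$, the limit also satisfies $u \cdot \vec n_\Omega = 0$ on $\partial\Omega$, hence $u \in \mathcal{R}_\partial(\Omega)$. For the orthogonality, fix any $v \in \mathcal{R}_\partial(\Omega) \subset L^2(\Omega)$; then $\int_\Omega u \cdot v = \lim_k \int_\Omega u_k \cdot v = 0$ by strong $L^2$ convergence. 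Taking $v = u$ gives $\|u\|_{L^2(\Omega)}^2 = 0$, contradicting $u \neq 0$. This proves the inequality with a constant depending only on $\Omega$.

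The only genuine subtlety — and the step I would be most careful about — is the compactness argument itself: one must know a priori that the normalized sequence $u_k$ is bounded in $W^{1,2}$, which here is automatic from the normalization $\|u_k\|_{W^{1,2}} = 1$, but one must also invoke Theorem \ref{korn_standard}(i) to conclude that $D(u) = 0$ together with $u \in L^2$ actually forces $u \in W^{1,2}$ with $u$ affine; this is exactly where the finite-dimensionality of $\mathcal{R}(\Omega)$ and the Lipschitz regularity of $\partial\Omega$ enter. No quantitative estimate on $C_\Omega$ is produced by this argument, but none is asked for.
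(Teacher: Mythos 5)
Your proof is correct and follows essentially the same route as the paper: a contradiction/compactness argument with a normalized sequence, the standard Korn inequality of Theorem \ref{korn_standard}, and passage of the tangency and orthogonality conditions to the limit. The only cosmetic difference is that the paper invokes part (ii) to approximate $u_n$ by rigid motions $A_nx+b_n$ and identifies the limit through them, while you identify $D(u)=0$ directly and use part (i) on $u_k-u$ to upgrade to strong $W^{1,2}$ convergence --- both are minor variants of the same argument.
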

\begin{proof}
We argue by contradiction, starting with a sequence $u_n\in W^{1,2}(\Omega)$
satisfying $u_n\cdot\vec n_\Omega=0$ on $\partial\Omega$, (\ref{6.2.uno}) and:
\begin{equation}\label{help2}
\|u_n\|_{W^{1,2}(\Omega)} = 1, \qquad \|D(u_n)\|_{L^2(\Omega)} \longrightarrow 0
\quad \mbox{ as } \quad n\longrightarrow\infty.
\end{equation}
Without loss of generality, $u_n$ converges hence weakly to some $u$ in $W^{1,2}(\Omega)$, and
the convergence is strong in $L^2(\Omega)$. Clearly $u\cdot\vec n_\Omega=0$ on $\partial\Omega$
and (\ref{6.2.uno}) still holds. By Theorem \ref{korn_standard} (ii), there exist sequences 
$A_n\in so(n)$ and $b_n\in \mathbf{R}^n$ so that $u_n - (A_n x + b_n)$ converges
to $0$ in $W^{1,2}(\Omega)$.

Therefore $A_nx+b_n$ converges weakly to $u$ in $W^{1,2}(\Omega)$ and we see that 
$u\in\mathcal{R}_\partial (\Omega)$.
By (\ref{6.2.uno}) there hence must be $u=0$ and $u_n$ converges then (strongly) to $0$ in
$W^{1,2}(\Omega)$. This contradicts the first condition in (\ref{help2}).
\end{proof}

\begin{example}
Let $\Omega=B_1\subset\mathbf{R}^3.$ Since $A\in so(3)$, there must be 
$Ax= a\times x$, for some $a\in\mathbf{R}^3$ and we obtain:
$$\mathcal{R}_\partial (B_1) = \{a\times x; ~ a\in\mathbf{R}^3.\}$$
Condition (\ref{6.2.uno}) reads:
$$0 = \int_{B_1} (a\times x)\cdot u(x) ~\mbox{d}x = a\cdot \int_{B_1} x\times u(x) ~\mbox{d}x
\qquad \forall a\in \mathbf{R}^3.$$
Thus the class of functions $u$ for which the hypotheses of Theorem \ref{naive_korn} are
satisfied is the following:
$$\left\{u\in W^{1,2}(\Omega); ~~ u\cdot\vec n_\Omega = 0 \mbox{ on } \partial B_1, ~
\int_{B_1} x\times u(x) ~\mbox{d}x = 0 \right\}.$$
\end{example}

\bigskip

As observed in the next result, condition (\ref{6.2.uno}) is not void if and only if our
bounded domain $\Omega$ is rotationally symmetric.

\begin{theorem}\label{lemma_rotations}
If $\mathcal{R}_\partial (\Omega)\neq \{0\}$ then $\Omega$ must be rotationally symmetric. 
\end{theorem}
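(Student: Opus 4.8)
\textbf{Proof proposal for Theorem \ref{lemma_rotations}.}

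The plan is to show that a nonzero element $v \in \mathcal{R}_\partial(\Omega)$ generates a one-parameter group of rotations of $\mathbf{R}^n$ that preserves $\Omega$. Write $v(x) = Ax + b$ with $A \in so(n)$, $b \in \mathbf{R}^n$, and consider the flow $\Phi_t$ it generates, which solves $\frac{\mathrm{d}}{\mathrm{d}t}\Phi_t(x) = A\Phi_t(x) + b$, $\Phi_0 = \mathrm{Id}$. First I would observe that since $A$ is skew, the affine flow $\Phi_t(x) = e^{tA}x + \int_0^t e^{sA}b\,\mathrm{d}s$ is, for each $t$, a rigid motion of $\mathbf{R}^n$ whose linear part $e^{tA} \in SO(n)$; when $A$ is invertible on its range appropriately, one can in fact conjugate by a translation to make it a pure rotation about a fixed point, but in general it suffices that each $\Phi_t$ is an isometry of $\mathbf{R}^n$. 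The key point is that the vector field $v$ being tangent to $\partial\Omega$ forces $\Phi_t(\partial\Omega) = \partial\Omega$, hence $\Phi_t(\Omega) = \Omega$, so $\Omega$ admits a nontrivial continuous family of isometric symmetries.

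The central step is to upgrade ``$v$ is tangent to $\partial\Omega$ in the weak ($W^{1,2}$-trace / $L^2(\partial\Omega)$) sense'' to ``the flow $\Phi_t$ maps $\partial\Omega$ to itself.'' I would argue as follows: since $v \cdot \vec n_\Omega = 0$ on $\partial\Omega$ and $v$ is smooth (indeed affine), the function $x \mapsto \mathrm{dist}(x,\partial\Omega)^2$ or, better, a defining function $\varphi$ for $\partial\Omega$ near the boundary satisfies $\frac{\mathrm{d}}{\mathrm{d}t}\varphi(\Phi_t(x)) = \nabla\varphi(\Phi_t(x)) \cdot v(\Phi_t(x))$, which vanishes on $\partial\Omega$; combined with the fact that $\Phi_t$ is an isometry (so it cannot create or destroy boundary regularity) one concludes $\Phi_t$ permutes the level sets of $\varphi$ near zero, in particular $\Phi_t(\partial\Omega) \subseteq \partial\Omega$, and by running the flow backwards, equality. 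Since $\partial\Omega$ is only Lipschitz, I would be slightly careful here and instead test the tangency condition against the flow directly: for a.e.\ $x \in \partial\Omega$, $\frac{\mathrm{d}}{\mathrm{d}t}\big|_{t=0}\mathbf{1}_\Omega(\Phi_t(x))$ measures the normal component of $v$, which is zero, and a Gronwall/transport argument on $\mathbf{1}_\Omega$ (using that $\mathrm{div}\, v = \mathrm{tr}\, A = 0$, so $\Phi_t$ is volume-preserving) shows $\mathbf{1}_\Omega \circ \Phi_t = \mathbf{1}_\Omega$ in $L^1$, i.e.\ $\Phi_t(\Omega) = \Omega$ up to measure zero, hence exactly since $\Omega$ is open with Lipschitz boundary.

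Once $\Phi_t$ is a genuine one-parameter group of isometries of $\mathbf{R}^n$ fixing $\Omega$, I would finish by noting that since $v \not\equiv 0$, the group is nontrivial, and a nontrivial one-parameter subgroup of the Euclidean isometry group that has a bounded invariant set (namely $\Omega$) must, after translating the origin to a point on the axis of the motion, consist of pure rotations: a screw motion or a nontrivial translation component would push $\Omega$ off to infinity, contradicting boundedness. Hence $\Omega$ is invariant under a nontrivial one-parameter group of rotations about some affine subspace, which is precisely the assertion that $\Omega$ is rotationally symmetric.

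The main obstacle I anticipate is the low (Lipschitz) regularity of $\partial\Omega$: justifying that weak tangency of the affine field $v$ along $\partial\Omega$ propagates to exact flow-invariance of $\Omega$ cannot quote a smooth normal-bundle argument and instead needs the transport/measure-theoretic reasoning with the divergence-free condition $\mathrm{tr}\,A = 0$ sketched above. The structural classification of bounded-orbit one-parameter isometry groups as rotations is elementary linear algebra ($e^{tA}$ plus an affine fixed-point analysis) and should be routine.
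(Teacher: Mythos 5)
Your proposal is correct and follows essentially the same route as the paper: both generate the one-parameter flow of the affine field $v(x)=Ax+b$ and use boundedness of $\Omega$ to force the component of $b$ along $\mathrm{Ker}(A)$ to vanish (after translating by $b_0$, where $b=b^{ker}+Ab_0$), so that the flow is a pure rotation preserving $\Omega$. The only notable difference is that the paper flows a single boundary point and simply asserts the trajectory remains on $\partial\Omega_0$, whereas your transport/divergence-free argument spells out a justification of that tangency-to-invariance step on the Lipschitz boundary.
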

\begin{proof}
Let $v(x) = Ax+b\in\mathcal{R}_\partial (\Omega).$ We will prove that the flow
generated by the tangent vector field $v_{|\partial\Omega}$ is a rotation.

Since $A\in so(n)$ we have that $\mathbf{R}^n = Ker(A) \oplus Im(A)$ is an orthogonal decomposition of 
$\mathbf{R}^n$. Write $b=b^{ker} + A b_0$, $b^{ker} \in Ker (A)$, 
and consider the translated domain $\Omega_0 = \Omega + b_0$. Now:
$$Ax + b = A(x+b_0) + b^{ker}\qquad \forall x\in\Omega,$$
so $y\mapsto Ay + b^{ker}$ is a tangent vector field on $\partial\Omega_0$.
Consider the flow $\alpha$ which this field generates in $\mathbf{R}^n$:
$$\left\{\begin{array}{l} \alpha'(t) = A\alpha(t) + b^{ker}\\
\alpha(0) \in \partial\Omega_0.
\end{array}\right.$$
Then $\alpha(t) = \beta(t) + \delta(t)$, where:
$$\left\{\begin{array}{lll} \beta'(t) = A\beta(t), & \beta(0)\in Im(A) & \\
\delta'(t) = b^{ker}, & \delta(0)\in Ker(A), & \beta(0) + \delta(0) = \alpha(0).
\end{array}\right.$$
Notice that:
$$\frac{d}{dt}|\beta(t)|^2 = 2 \beta(t)\cdot A\beta(t) = 0,$$
so $\beta(t)$ remains bounded, while $\delta(t) = \delta(0) + t b^{ker}$ is unbounded
for $b^{ker}\neq 0$. Since $\alpha(t)\in\partial\Omega_0$ for all $t\geq 0$, 
there must be $b^{ker}=0$. Hence the flow $\alpha$ is a rotation (generated by $A\in so(n)$)
on $\partial\Omega_0$, which proves the claim.
\end{proof}

From the proof above it follows that each $v\in\mathcal{R}_\partial(\Omega)$ has the form
$v(x)=A(x+b_0)$, $A\in so(n)$, $b_0\in\mathbf{R}^n$. We thus obtain the following characterisation
when $\Omega\subset\mathbf{R}^3$:
$$\mathcal{R}_\partial (\Omega) = 
\left\{\begin{array}{ll} \{0\} & \mbox{ if } ~ \Omega \mbox{ has no rotational symmetry}\\
\mbox{a 1-parameter family} & \mbox{ if } ~ \Omega \mbox{ has one rotational symmetry}\\
\mbox{a 3-parameter family} & \mbox{ if } ~ \Omega = B_r.
\end{array}\right.$$

\section{Appendix B - The uniform Korn inequality} 
%on star-shaped domains}
\label{uniform_korn}

Throughout this section we will make the following assumptions on $\Omega$:
\begin{equation*} \mathbf{(\Omega H)}\left[
\begin{minipage}{14.5cm}
\begin{itemize}
\item[(i)] $\Omega$ is an open, bounded subset of $\mathbf{R}^n$, star-shaped with respect to 
the origin.
\item[(ii)] There exists $L>0$ such that the following holds.  
For every $x\in\Omega\setminus \{0\}$,
denote by $p(x)$ the unique point on $\partial\Omega$, with the property
that the segment $[0, p(x)]$ contains $x$.
Then:
$$|p(x) - x| \leq L\mbox{dist }(x,\partial\Omega).$$
\end{itemize}
\end{minipage}\right.
\end{equation*}
Our goal is to prove:
\begin{theorem}\label{app_main_korn}
For every $u\in W^{1,2}(\Omega,\mathbf{R}^n)$  there exists $A\in so(n)$ such that:
\begin{equation}\label{app_uno}
\|\nabla u - A\|_{L^2(\Omega)} \leq C_{n,L} \|D(u)\|_{L^2(\Omega)},
\end{equation}
and the constant $C_{n,L}$ depends only on $n$ and (in nondecreasing manner) 
on $L$.
\end{theorem}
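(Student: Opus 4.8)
The plan is to prove Theorem~\ref{app_main_korn} by a compactness-and-contradiction argument, carefully keeping track of which features of $\Omega$ enter the constant. First I would reduce the problem: given $u\in W^{1,2}(\Omega,\mathbf{R}^n)$, Theorem~\ref{korn_standard}(ii) applied to $\Omega$ itself already furnishes some $A\in so(n)$ with $\|\nabla u-A\|_{L^2(\Omega)}\leq C_\Omega\|D(u)\|_{L^2(\Omega)}$; the only issue is that $C_\Omega$ is a priori domain-dependent. So the real content is that $C_\Omega$ can be bounded purely in terms of $n$, the ratio $r/R$, and the Lipschitz constant $L$. I would isolate the best constant
\begin{equation*}
K(\Omega)=\sup\Big\{\inf_{A\in so(n)}\frac{\|\nabla u-A\|_{L^2(\Omega)}}{\|D(u)\|_{L^2(\Omega)}}\ :\ u\in W^{1,2}(\Omega,\mathbf{R}^n),\ D(u)\neq 0\Big\},
\end{equation*}
which is finite for each fixed $\Omega$ by Theorem~\ref{korn_standard}, and show $\sup\{K(\Omega) : \Omega\in\mathcal{A}_{n,\rho,L}\}<\infty$, where $\mathcal{A}_{n,\rho,L}$ is the class of Lipschitz domains star-shaped with respect to $0$, satisfying $B_r\subset\Omega\subset B_R\subset B_1$ with $r/R\geq\rho$, and with boundary Lipschitz constant $\leq L$.

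Suppose not. Then there are domains $\Omega_k\in\mathcal{A}_{n,\rho,L}$ and fields $u_k\in W^{1,2}(\Omega_k,\mathbf{R}^n)$, normalized (after subtracting the optimal skew matrix $A_k$ and rescaling) so that $\inf_{A}\|\nabla u_k-A\|_{L^2(\Omega_k)}$ equals, say, $\|\nabla u_k\|_{L^2(\Omega_k)}=1$ while $\|D(u_k)\|_{L^2(\Omega_k)}\to 0$. The normalization $\inf_A\|\nabla u_k-A\|=\|\nabla u_k\|$ means $\int_{\Omega_k}\mathrm{skew}(\nabla u_k)=0$, i.e.\ the skew part of $\nabla u_k$ has zero average; since $\nabla u_k-(\nabla u_k)^T=2\,\mathrm{skew}(\nabla u_k)$ and $D(u_k)\to 0$, we get $\int_{\Omega_k}\nabla u_k\to 0$ too. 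I would then fix the additive constant in $u_k$ by requiring $\int_{\Omega_k}u_k=0$; since all $\Omega_k\supset B_r$ have volume bounded below, Poincar\'e (uniform over the class, via the uniform cone/Lipschitz condition) gives $\|u_k\|_{L^2(\Omega_k)}\leq C$. Because $\Omega_k\subset B_1$, I can extend each $u_k$ to $\tilde u_k\in W^{1,2}(B_1,\mathbf{R}^n)$ with $\|\tilde u_k\|_{W^{1,2}(B_1)}\leq C(n,L)\|u_k\|_{W^{1,2}(\Omega_k)}$ — this is where the uniform Lipschitz/cone bound is essential, providing extension operators with a common norm. The extended sequence is bounded in $W^{1,2}(B_1)$; pass to a subsequence converging weakly in $W^{1,2}(B_1)$ and strongly in $L^2(B_1)$, and simultaneously (by Blaschke-type selection in the Hausdorff metric, using $B_r\subset\Omega_k\subset B_R$ and the uniform Lipschitz bound) assume $\Omega_k\to\Omega_\infty$ with $\Omega_\infty\in\mathcal{A}_{n,\rho,L}$ still an open Lipschitz domain containing $B_r$.

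On $\Omega_\infty$ the limit $u_\infty$ satisfies $D(u_\infty)=0$ (lower semicontinuity of the $L^2$ norm of $D$ against the convergence, using that eventually any compact subset of $\Omega_\infty$ lies in $\Omega_k$), so $u_\infty(x)=A_\infty x+b_\infty$ with $A_\infty\in so(n)$; and $\int_{\Omega_\infty}\mathrm{skew}(\nabla u_\infty)=0$ forces $A_\infty=0$, while $\int_{\Omega_\infty}u_\infty=0$ forces $b_\infty=0$, hence $u_\infty\equiv 0$. To reach a contradiction I must upgrade the convergence on $\Omega_k$ to strong $W^{1,2}$ convergence of $u_k$ to $0$: write $\int_{\Omega_k}|\nabla u_k|^2\leq 2\int_{\Omega_k}|D(u_k)|^2+2\int_{\Omega_k}|\mathrm{skew}(\nabla u_k)|^2$; the first term $\to 0$ by hypothesis, and for the second I apply Theorem~\ref{korn_standard}(i) on the fixed domain $B_r\subset\Omega_k$ (or better, on an exhausting sequence of smooth subdomains) together with the strong $L^2(B_1)$ convergence $\tilde u_k\to 0$ to conclude $\|\nabla u_k\|_{L^2(B_r)}\to 0$; a covering argument using the star-shapedness and the uniform Lipschitz structure then propagates this to all of $\Omega_k$. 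This contradicts $\|\nabla u_k\|_{L^2(\Omega_k)}=1$.

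The main obstacle, and the place demanding genuine care, is the interplay between the varying domains and the functional-analytic tools: establishing a \emph{uniform} $W^{1,2}$ extension operator and a \emph{uniform} Poincar\'e inequality over the whole class $\mathcal{A}_{n,\rho,L}$ (these both rest on a uniform cone condition extracted from the bound $L$ on the Lipschitz constant together with $B_r\subset\Omega_k\subset B_R$), and the compactness of the class in the Hausdorff metric with the limit staying in the class. Once these uniformities are in hand, the contradiction argument runs exactly as in the fixed-domain Theorem~\ref{naive_korn}, but one must be vigilant that no constant secretly depends on $\Omega_k$ beyond $n$, $r/R$, and $L$ — in particular the final covering step that globalizes the interior estimate must use only the controlled geometry. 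An alternative to the soft compactness argument, which avoids Hausdorff convergence of domains entirely, is a direct quantitative proof: transplant $u$ to the fixed ball $B_1$ via the bi-Lipschitz radial map $\Phi_\Omega(x)=x\,\rho_\Omega(x/|x|)$ (with $\rho_\Omega$ the radial parametrization of $\partial\Omega$, whose Lipschitz norm is controlled by $L$, $r$, $R$), apply the fixed-domain Korn inequality on $B_1$, and track how $D$ and $\nabla$ transform under $\Phi_\Omega$; I would mention this as the more constructive route but carry out the compactness proof as the cleaner one.
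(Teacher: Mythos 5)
Your soft compactness scheme breaks down at the single point where the theorem has actual content, namely the passage from interior control to control up to the (varying, merely Lipschitz) boundary. In the contradiction step you write $\int_{\Omega_k}|\nabla u_k|^2\le 2\int_{\Omega_k}|D(u_k)|^2+2\int_{\Omega_k}|\mathrm{skew}(\nabla u_k)|^2$, handle the skew part on the fixed ball $B_r$ by Theorem~\ref{korn_standard}(i), and then assert that ``a covering argument\dots propagates this to all of $\Omega_k$.'' This is precisely the unproved claim. Theorem~\ref{korn_standard} carries a constant $C_\Omega$ depending on the domain, so it may be invoked uniformly only on the fixed ball; on the boundary region of $\Omega_k$ any application of it (to boundary patches, or to $\Omega_k$ itself in the form $\|\nabla u\|\le C(\|u\|_{L^2}+\|D(u)\|_{L^2})$) requires exactly the uniform-in-$\Omega$ constant you are trying to establish, so the argument is circular. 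Nothing in weak $W^{1,2}$ convergence, strong $L^2$ convergence and $D(u_k)\to0$ forbids $|\mathrm{skew}(\nabla u_k)|^2$ from concentrating in a shrinking neighbourhood of $\partial\Omega_k$; ruling this out is the theorem. If you try to make the covering honest, you are led to a Whitney covering of $\Omega_k$ by balls of radius comparable to $\mathrm{dist}(\cdot,\partial\Omega_k)$, local skew matrices $A_i$ that must be compared along chains reaching the boundary, and weighted estimates with the weight $\mathrm{dist}^2(x,\partial\Omega)$ to absorb the accumulated errors --- which is exactly the machinery the paper builds (Lemmas~\ref{app_kufner}, \ref{app_fjm}, \ref{app_oleinik}), not a routine step. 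The same objection applies to your fallback ``transplantation'' route: under a bi-Lipschitz change of variables the symmetrized gradient of the transplanted field picks up terms of size $|\nabla\Phi_\Omega-\mathrm{Id}|\,|\nabla u|$, and since $\nabla\Phi_\Omega$ is only bounded, not close to a fixed rotation, smallness of $D(u)$ is not preserved.

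For contrast, the paper's proof is direct and quantitative, with no compactness in the domain variable: one splits $u=v+w$ with $\Delta v=\Delta u$, $v=0$ on $\partial\Omega$, and gets $\|\nabla v\|_{L^2}\le 4\|D(u)\|_{L^2}$ from the identity (\ref{useful_formula}); for the harmonic remainder $w$ one applies the interior Korn inequality on Whitney balls, the Oleinik--Kondratiev weighted bound for harmonic functions, and the weighted Poincar\'e-type Lemmas~\ref{app_kufner}--\ref{app_fjm} (Hardy's inequality along rays, using star-shapedness and the Lipschitz bound $L$) to produce a single matrix $B$ with $\|\nabla w-B\|_{L^2(\Omega)}\le C_{n,r,L}\|D(w)\|_{L^2(\Omega)}$, and finally projects $B$ onto $so(n)$; the dependence on $n$, $r/R$, $L$ is tracked explicitly through these lemmas and a rescaling. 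The peripheral parts of your plan (uniform Poincar\'e, which is Theorem~\ref{th_chen_li}; the normalization via the average of $\mathrm{skew}(\nabla u_k)$; Hausdorff compactness of the radial parametrizations) are fine, but they do not touch the boundary-concentration issue, so as it stands the proposal has a genuine gap at its decisive step.
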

Our proof is essentially a combination of the arguments in \cite{OK, kufner}, 
where we need to keep track of the magnitude of various constants, and of \cite{FJMgeo}.
In \cite{FJMgeo}, the $L^2$ distance of $\nabla u$ from a single proper rotation is
estimated in terms of the $L^2$ norm of the pointwise distance of $\nabla u$ from 
the space of proper rotations $SO(n)$. Note that $so(n)$ is the tangent space to $SO(n)$
at $\mbox{Id}$. Hence (\ref{app_main_korn}) can be seen as the ``linear'' version of
the result in \cite{FJMgeo}.

For convenience of the reader, we present the proof of Theorem \ref{app_uno}.
A similar line of proof was adopted in \cite{Grisorods} 
where the constant $C_{n,L}$ in (\ref{app_uno}) (or, more recently, its $L^p$ counterpart in
\cite{Grisonew}) has been calculated explicitly,
for domains which are star-shaped with respect to a ball. 

\begin{lemma}\label{lemLcond}
Let $\Omega$ be an open, bounded subset of $\mathbf{R}^n$.
\begin{itemize}
\item[(i)] If $B_r\subset \Omega\subset B_R$ and $\Omega$ is star-shaped with respect to $B_r$, 
then $\mathbf{(\Omega H)}$ holds with $L=R/r$.
\item[(ii)] Conversely, if $\Omega$ satisfies $\mathbf{(\Omega H)}$ then it is star-shaped with respect 
to a ball $B_r$ such that, calling $R=\min \{\tilde R; ~\Omega\subset B_{\tilde R}\}$, the ratio
$R/r$ depends only on $L$, in nondecreasing manner.
\end{itemize}
\end{lemma}
\begin{proof}
(i) is immediate.  To prove (ii), fix $L$ sufficiently large. 
For each $x\in\mathbf{R}^n\setminus \{0\}$ define the 'diamond'
$D_x$ obtained by rotating the right triangle with vertexes $0, a, x$ and angle $\angle a0x = \alpha$,
so that $|x|/|a|=L$, around its hypotenuse $[0,x]$.
The property $\mathbf{(\Omega H)} (ii)$ can then be translated to: $D_x\subset \Omega$ 
for every $x\in\Omega$.

Let $x_1\in \partial\Omega$ be such that $|x_1|=R$. Then $D_{x_1}\subset\Omega$.  Let $x_2=a$ 
from the construction of $D_{x_1}$.  Clearly $x_2\in \Omega$ and hence $D_{x_2}\subset \Omega$.
Proceed in this manner, constructing diamonds $\{D_{x_i}\}_{i=1}^N$, 
with equal angles at the origin and all $x_i$ in the same $2$d subspace of 
$\mathbb{R}^n$.  
After finitely many steps of this procedure we will have $x_N\in D_{x_1}$
and $B_{\tilde r}\subset \Omega$ with $\tilde r =|x_N| = R/L^{N-1}$, 
where $N= [2\pi/\alpha]$.
An easy argument now shows that $\Omega$ is star-shaped with respect to $B_r$ for any
$r\leq\tilde r/L$. Namely, taking $x\in\partial\Omega$, the convex hull of
$B_r\cup\{x\}$ is contained in  $D_x\cup B_{\tilde r} \subset \Omega$.
Therefore, one can take $r=R/L^{(2\pi/\alpha)}$, so:
$$\frac{R}{r} = L^{\frac{2\pi}{\arccos(1/L)}},$$ which is a non-decreasing function of $L$.
\end{proof}

\begin{lemma}\label{app_kufner}
For every $\phi\in W^{1,2}(\Omega)$ there holds:
$$\int_\Omega |\phi|^2 \leq C_{n,L} \left(\int_{B_r} |\phi|^2 + \int_\Omega |\nabla \phi|^2
\mathrm{dist}^2(x,\partial\Omega) ~\mathrm{d}x\right).$$
\end{lemma}
\begin{proof}
Without loss of generality we may assume that $\phi\in \mathcal{C}^\infty(\mathbf{R}^n)$. 
We adopt the proof of Theorem 8.2. in \cite{kufner}.
Let $R$ and $r$ be as in Lemma \ref{lemLcond}.

Let $\theta:[0,\infty)\longrightarrow [0,1]$ be a smooth non-decreasing function
satisfying:
\begin{equation*}
\begin{split}
&\theta(s) = 0 \quad \mbox{ for } s\leq \frac{r}{4}, \qquad 
\theta(s) = 1 \quad \mbox{ for } s\geq \frac{r}{2},\\
&|\theta'(s)|\leq \frac{8}{r} \quad \mbox{ for } s\geq 0. 
\end{split}
\end{equation*}
Fix a point $p\in\partial\Omega$ and consider the function $\theta \phi$ on the
segment $[0,p]$ joining the origin and $p$. Using Hardy's inequality \cite{kufner} and 
condition $\mathbf{(\Omega H)}$ we obtain:
\begin{equation*}
\begin{split}
\int_{r/2}^{|p|}& |\phi|^2 ~\mbox{d}|x| \leq \int_0^{|p|} |\theta\phi|^2 ~\mbox{d}|x| 
\leq 4 \int_0^{|p|} \left|\frac{\partial(\theta\phi)}
{\partial |x|}\right|^2\cdot \Big||p| - |x|\Big|^2 ~\mbox{d}|x| \\
&\leq 8L^2 \left(\int_{r/4}^{r/2} |\theta'|^2 |\phi|^2 
\mbox{dist}^2(x,\partial\Omega) + \int_{r/4}^{|p|} |\nabla\phi|^2 
\mbox{dist}^2(x,\partial\Omega)~\mbox{d}|x|\right)\\
&\leq C_{n, L, R/r} \left(\int_{r/4}^{r/2} |\phi|^2 
+ \int_{r/4}^{|p|} |\nabla\phi|^2 
\mbox{dist}^2(x,\partial\Omega)~\mbox{d}|x|\right).
\end{split}
\end{equation*}
Hence, also:
\begin{equation*}
\int_{r/2}^{|p|} |x|^{n-1} |\phi|^2 
\leq C_{n,L, R/r} \left(\int_{r/4}^{r/2}
 |x|^{n-1}|\phi|^2 + \int_{r/4}^{|p|}  |x|^{n-1}
|\nabla\phi|^2\mbox{dist}^2(x,\partial\Omega)\right),
\end{equation*}
which after integration in spherical coordinates gives:
\begin{equation}\label{app_kufner_1}
\begin{split}
\int_{\Omega\setminus B_{r/2}} |\phi|^2 ~\mbox{d}x 
\leq C_{n,L,R/r} \Big(&\int_{B_{r/2}\setminus B_{r/4}}|\phi|^2 \\
& + \int_{\Omega\setminus B_{r/4}}|\nabla \phi|^2\mbox{dist}^2(x,\partial\Omega)~\mbox{d}x\Big).
\end{split}
\end{equation}

Since $C_{n,L,R/r} = C_{n,L}$ in view of Lemma \ref{lemLcond}, the result follows by (\ref{app_kufner_1}).   
\end{proof}

\begin{theorem}\label{app_fjm}
For every $\phi\in W^{1,2}(\Omega)$ there exists $a\in\mathbf{R}$ such that:
$$\int_\Omega |\phi-a|^2 \leq C_{n,L} \int_\Omega |\nabla \phi|^2 
\mathrm{dist}^2(x,\partial\Omega) ~\mathrm{d}x.$$
\end{theorem}
\begin{proof}
We adopt the method of proof from Theorem 3.1. in \cite{FJMgeo}.
Again, let $R$ and $r$ be as in Lemma \ref{lemLcond}.
By the Poincar\'e inequality we obtain:
\begin{equation}\label{app_fjm_1}
\int_{B_{r/2}} \left|\phi - \fint_{B_{r/2}} \phi\right|^2 
\leq C_{n} r^2 \int_{B_{r/2}} |\nabla \phi|^2
\leq C_n \int_{B_{r/2}} |\nabla \phi|^2 \mbox{dist}^2(x,\partial\Omega)~\mbox{d}x.
\end{equation}
Applying Lemma \ref{app_kufner} to the function $\phi - \fint_{B_{r/2}} \phi$ on $\Omega$ 
we therefore get:
\begin{equation*}
\begin{split}
\int_{\Omega} \left|\phi - \fint_{B_{r/2}} \phi\right|^2 
& \leq C_{n,L} \left(\int_{B_{r/2}} \left|\phi - \fint_{B_{r/2}} \phi\right|^2 + 
\int_\Omega |\nabla \phi|^2\mbox{dist}^2(x,\partial\Omega)~\mbox{d}x\right)\\
&\leq C_{n,L} \int_{\Omega} |\nabla \phi|^2 \mbox{dist}^2(x,\partial\Omega)~\mbox{d}x,
\end{split}
\end{equation*}
where the last inequality follows from (\ref{app_fjm_1}).
\end{proof}

We now recall the following result from \cite{OK}. For convenience of 
the reader, we reproduce its short proof.
\begin{lemma}\label{app_oleinik}
Let $\phi\in W^{1,2}(\Omega)$ be such that $\Delta \phi = 0$ in 
$\mathcal{D}'(\Omega)$. Then:
$$\int_\Omega |\nabla \phi|^2 \mathrm{dist}^2(x,\partial\Omega)~\mathrm{d}x
\leq 4  \int_\Omega |\phi|^2.$$
\end{lemma}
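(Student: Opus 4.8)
\medskip

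\noindent{\bf Proof proposal for Lemma \ref{app_oleinik}.}
This is a weighted Caccioppoli inequality, and the natural route is an absorption argument: test the equation against $\eta=d^2\phi$, where $d(x)=\mathrm{dist}(x,\partial\Omega)$, after a truncation of $d$ near $\partial\Omega$ to make $\eta$ an admissible test function. No facts beyond the weak formulation of harmonicity and the Lipschitz product rule are needed; in fact the argument produces the sharper constant $4$ in place of $20$.

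First I would record that $\phi\in W^{1,2}(\Omega)$ together with $\Delta\phi=0$ in $\mathcal{D}'(\Omega)$ yields, after one integration by parts and the density of $\mathcal{C}^\infty_c(\Omega)$ in $W^{1,2}_0(\Omega)$, the weak form
$$\int_\Omega \nabla\phi\cdot\nabla\eta = 0 \qquad \forall\,\eta\in W^{1,2}_0(\Omega).$$
Next, for $\delta>0$ put $d_\delta=\max\{d-\delta,0\}$; this function is $1$-Lipschitz, bounded, and vanishes on the neighbourhood $\{d\le\delta\}$ of $\partial\Omega$, so $d_\delta^2\phi\in W^{1,2}(\Omega)$ has support compactly contained in $\Omega$, hence $d_\delta^2\phi\in W^{1,2}_0(\Omega)$, with $\nabla(d_\delta^2\phi)=d_\delta^2\nabla\phi+2d_\delta\phi\,\nabla d_\delta$ and $|\nabla d_\delta|\le 1$ a.e. Testing the weak form with $\eta=d_\delta^2\phi$ gives
$$\int_\Omega d_\delta^2|\nabla\phi|^2 = -2\int_\Omega d_\delta\,\phi\,\nabla\phi\cdot\nabla d_\delta \le 2\int_\Omega d_\delta\,|\phi|\,|\nabla\phi| \le 2\Big(\int_\Omega d_\delta^2|\nabla\phi|^2\Big)^{1/2}\Big(\int_\Omega|\phi|^2\Big)^{1/2},$$
by Cauchy--Schwarz. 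If the left-hand side vanishes the bound is trivial; otherwise dividing by $\big(\int_\Omega d_\delta^2|\nabla\phi|^2\big)^{1/2}$ gives $\int_\Omega d_\delta^2|\nabla\phi|^2\le 4\int_\Omega|\phi|^2$, and letting $\delta\to0$ (monotone convergence, as $d_\delta\uparrow d$) yields $\int_\Omega d^2|\nabla\phi|^2\le 4\int_\Omega|\phi|^2\le 20\int_\Omega|\phi|^2$.

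The only point requiring care is the claim that $d_\delta^2\phi$ is an admissible test function with the stated gradient: this follows from the Lipschitz product rule for $W^{1,2}$ functions multiplied by bounded Lipschitz functions, together with the standard fact that any $W^{1,2}(\Omega)$ function whose support is a compact subset of $\Omega$ belongs to $W^{1,2}_0(\Omega)$ (seen by mollification). Alternatively one could first mollify $\phi$ to a smooth harmonic function on $\Omega_\delta$, run the same computation there, and pass to the limit; the truncation of $d$ is, however, the shortest rigorous path, and the remainder is the elementary inequality $2ab\le\varepsilon a^2+\varepsilon^{-1}b^2$.
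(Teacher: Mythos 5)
Your proof is correct, and there is in fact nothing in the paper to compare it against: Lemma \ref{app_oleinik} is simply recalled from Oleinik--Kondratiev \cite{OK} without proof, so your argument makes this part of Appendix B self-contained. The truncated weighted Caccioppoli argument --- testing the weak form of harmonicity with $\eta=d_\delta^2\phi$, where $d_\delta=\max\{\mathrm{dist}(\cdot,\partial\Omega)-\delta,0\}$, absorbing via Cauchy--Schwarz, and letting $\delta\to0$ by monotone convergence --- goes through exactly as you say, and the technical points you flag (product rule for $W^{1,2}\times W^{1,\infty}$, compactly supported $W^{1,2}$ functions lying in $W^{1,2}_0(\Omega)$, finiteness of $\int_\Omega d_\delta^2|\nabla\phi|^2$ before dividing) are handled correctly. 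Your route even yields the sharper constant $4$ in place of $20$, which is harmless since the lemma is only invoked up to dimensional constants in the proof of Theorem \ref{app_main_korn}. Two further small bonuses of your argument are worth recording: it uses no regularity or star-shapedness of $\Omega$ (boundedness suffices, so it applies verbatim on the balls $B_{2r_i}(a_i)$ where the lemma is actually used), and because $d_\delta$ is supported in the compact set $\{\mathrm{dist}(\cdot,\partial\Omega)\ge\delta\}$, the hypothesis $\phi\in W^{1,2}(\Omega)$ can be relaxed to $\phi\in L^2(\Omega)$ harmonic (interior regularity then supplies $\nabla\phi\in L^2_{loc}$), which is the form closest to how the estimate is applied to $\nabla w-A_i$ in the paper.
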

\begin{proof}
Fix a small $\epsilon>0$ and integrate the equation $\Delta\phi = 0$ against
the scalar function $(\mbox{dist}(x,\partial\Omega) - \epsilon)^2 \phi$, 
over the set $\Omega_\epsilon = 
\{x\in\Omega; ~\mbox{dist}(x,\partial\Omega)>\epsilon\}$.
Integrating by parts we obtain:
\begin{equation*}
\begin{split}
\int_{\Omega_\epsilon} (\mbox{dist}(x,\partial\Omega) -\epsilon)^2 
|\nabla \phi|^2~\mbox{d}x &= 
- \int_{\Omega_\epsilon} 2\mbox{dist}(x,\partial\Omega) -\epsilon)\phi(x)
\nabla\mbox{dist}(\cdot,\partial\Omega)\cdot\nabla \phi ~\mbox{d}x \\
&\leq 2\int_{\Omega_\epsilon}|\phi|^2 + \frac{1}{2}\int_{\Omega_\epsilon}
 (\mbox{dist}(x,\partial\Omega) -\epsilon)^2 
|\nabla \phi|^2~\mbox{d}x.
\end{split}
\end{equation*}
where we have used the binomial formula and the fact that
$|\nabla\mbox{dist}(\cdot,\partial\Omega)|\leq 1$.
The above implies:
$$ \int_{\Omega_\epsilon} (\mbox{dist}(x,\partial\Omega) -\epsilon)^2 
|\nabla \phi|^2~\mbox{d}x \leq  4\int_{\Omega}|\phi|^2,$$
and proves the lemma upon passing $\epsilon\to 0$.
\end{proof}

\bigskip

\noindent {\bf Proof of Theorem \ref{app_main_korn}.}

\noindent The left hand side of (\ref{app_uno}) represents the distance in $L^2(\Omega)$
of $\nabla u$ from the closed subspace of constant functions $A\in so(n)$. Since the distance
function is continuous, we may without loss of generality assume that 
$u\in\mathcal{C}^\infty(\mathbf{R}^n,\mathbf{R}^n)$. 

{\bf 1.} Consider the problem:
\begin{equation*}
\left\{\begin{array}{ll} \Delta v = \Delta u & \mbox{ in } \Omega,\\ 
v=0 & \mbox{ on } \partial\Omega.\end{array}\right.
\end{equation*}
Since:
\begin{equation}\label{useful_formula}
\Delta u = 2 \mbox{ div} \left\{D(u) - \frac{1}{2}(\mbox{tr } D(u))\cdot  \mbox{Id}\right\},
\end{equation}
we see that:
$$\int_\Omega |\nabla v|^2 = 2 \int_\Omega\nabla v : 
\left(D(u) - \frac{1}{2}(\mbox{tr } D(u))\cdot  \mbox{Id}\right)
\leq 4 \|\nabla v\|_{L^2(\Omega)} \|D(u)\|_{L^2(\Omega)}.$$
Therefore:
\begin{equation}\label{app_proof_1}
\|\nabla v\|_{L^2(\Omega)}\leq 4 \|D(u)\|_{L^2(\Omega)}.
\end{equation}

{\bf 2.} The remaining part $w=u-v$ is harmonic: $\Delta w = 0$ in $\Omega$. 
Hence, the components of $D(w)$ are also harmonic, and Lemma \ref{app_oleinik}
implies:
\begin{equation}\label{app_cov}
\int_\Omega |\nabla D(w)|^2 \mbox{dist}^2(x,\partial\Omega)~\mbox{d}x 
\leq 4 \int_\Omega |D(w)|^2.
\end{equation}
Notice that the components of $\nabla^2 w$ are linear combinations 
of components of $\nabla D(w)$, namely:
$[\nabla^2 w^k]_{ls} = \frac{\partial}{\partial x_l} [D(w)]_{ks} + 
\frac{\partial}{\partial x_s} [D(w)]_{kl} - \frac{\partial}{\partial x_k} [D(w)]_{ls}$.
Applying now Theorem \ref{app_fjm} to the components of $\nabla w$, we obtain
$B\in M^{n\times n}$ so that, in view of (\ref{app_cov}):
\begin{equation}\label{app_proof_3}
\int_\Omega |\nabla w - B|^2 \leq C_{n,L} 
\int_\Omega |\nabla^2 w|^2 \mbox{dist}^2(x,\partial\Omega)~\mbox{d}x
\leq C_{n,L} \int_\Omega |D(w)|^2.
\end{equation}
Define $A=(B- B^T)/2\in so(n)$ and notice that for every $x\in\Omega$ there holds:
\begin{equation*}
\begin{split}
|B-A| = \mbox{dist}_{M^{n\times n}} (B, so(n)) & \leq |B - \nabla w(x)| + 
\mbox{dist}_{M^{n\times n}}(\nabla w(x), so(n)) \\
& = |B - \nabla w(x)| + |D(w)(x)|.
\end{split}
\end{equation*}
Therefore:
\begin{equation}\label{app_proof_4}
\int_\Omega |B-A|^2 \leq C_{n,L} \int_\Omega |D(w)|^2.
\end{equation}
Now by (\ref{app_proof_1}), (\ref{app_proof_3}) and (\ref{app_proof_4}):
\begin{equation}\label{app_proof_5}
\begin{split}
\|\nabla u - A\|_{L^2(\Omega)} & \leq \|\nabla v \|_{L^2(\Omega)} 
+ \|\nabla w - B\|_{L^2(\Omega)} + \|B- A\|_{L^2(\Omega)}\\
& \leq C_{n,L} \left( \|D(u)\|_{L^2(\Omega)} + \|D(w)\|_{L^2(\Omega)} \right) \\
& \leq  C_{n,L} \|D(u)\|_{L^2(\Omega)},
\end{split}
\end{equation}
the last inequality following from $D(w) = D(u) - D(v)$ and the bound (\ref{app_proof_1}).
\endproof

\section{Appendix C - The Killing fields and the Korn inequality on hypersurfaces}
\label{surface_korn}

For a tangent vector field $u\in W^{1,2}(S,\mathbf{R}^n)$, 
define $D(u)$ 
as the symmetric part of its tangential gradient:
$$D(u) = \frac{1}{2} \left[(\nabla u)_{tan} + (\nabla u)_{tan}^T\right].$$
That is, for $x\in S$, $D(u)(x)$ is a symmetric bilinear form given through:
$$\tau^T D(u)(x) \eta = \frac{1}{2} \big(\tau \cdot\partial_{\eta} u(x)
+ \eta\cdot \partial_{\tau} u(x)\big) \qquad \forall \tau,\eta\in T_x S.$$
Recall that a smooth vector field $u$ as above is a Killing field,
provided that $D(u) = 0$ on $S$. We first prove that in presence of this
last condition, the regularity $u\in W^{1,2}(S)$ actually implies that $u$ 
is smooth. Further, we directly recover a generalisation of 
Theorem \ref{korn_standard} (ii) to the non-flat setting 
(Theorem \ref{th_surface_korn}).
Actually, the bound in Theorem \ref{korn_standard} (i)  remains true also in  
the more general framework of Riemannian manifolds \cite{JC}.

\bigskip

The following extension of $u$ on the neighbourhood of $S$ will be useful 
in the sequel:
\begin{equation}\label{def}
\tilde{u}(x+ t\vec{n}(x)) = (\mbox{Id} + t \Pi(x))^{-1} u(x)
\qquad \forall x\in S\quad\forall t\in(-h_0,h_0)
\end{equation}
for some small $h_0>0$. Here $ \Pi(x) = \nabla \vec n(x)$ is the shape operator 
on $S$.  We have $\tilde{u}\in W^{1,2}(\tilde{S},\mathbf{R}^n)$ 
where $\tilde S = S^{h_0}$ is open in $\mathbf{R}^n$. 
Notice that for each $z=x+t\vec n(x)\in \tilde S$ and $\tau_1\in T_x S$
there holds:
\begin{equation*}
\begin{split}
\partial_{\tau_1}\tilde u(z) & = \Big\{\nabla \left[(\mbox{Id} + t \Pi(x))^{-1}\right]
(\mbox{Id} + t \Pi(x))^{-1}\tau_1\Big\} u(x)\\
& \qquad\qquad 
+ (\mbox{Id} + t \Pi(x))^{-1}\nabla u(x)(\mbox{Id} + t \Pi(x))^{-1}\tau_1.
\end{split}
\end{equation*}
The first component above is bounded by $C|t u(x)|$. Taking the scalar product 
of the second component with any $\tau_2\in T_x S$ gives:
$$\big((\mbox{Id} + t \Pi(x))^{-1}\tau_2\big)\cdot \nabla u(x)
(\mbox{Id} + t \Pi(x))^{-1}\tau_1.$$
Since $(\mbox{Id} + t \Pi(x))(T_x S) = T_x S$ we obtain:
\begin{equation}\label{ext_D_1}
\begin{split}
& \tau_2^T D(\tilde u)(z) \tau_1 = \big((\mbox{Id} + t \Pi(x))^{-1}\tau_2\big)\cdot D(u)(x)
(\mbox{Id} + t \Pi(x))^{-1}\tau_1 \\
& \qquad \qquad \qquad + Z(t,x)\cdot u(x),\\
& |Z(t,x)| \leq C.
\end{split}
\end{equation}
On the other hand, $\vec n(x) \cdot\tilde u(z) = 0$, so for any $\tau\in T_x S$:
\begin{equation*}
\begin{split}
\vec n \cdot \partial_{\tau}\tilde u(z) & = - \Big( \Pi(x) (\mbox{Id} + t \Pi(x))^{-1}\tau\Big)
\cdot\tilde u(z) \\
& = - \Big((\mbox{Id} + t \Pi(x))^{-1} \Pi(x) (\mbox{Id} + t \Pi(x))^{-1}u(x)\Big)\cdot\tau
= \tau\cdot\partial_{\vec n}\tilde u(z).
\end{split}
\end{equation*}
Hence:
\begin{equation}\label{ext_D_2}
\begin{split}
& \vec n^T  D(\tilde u)(z) \tau = 
- \Big((\mbox{Id} + t \Pi(x))^{-1} \Pi(x) (\mbox{Id} + t \Pi(x))^{-1}u(x)\Big)\cdot\tau,\\
& \vec n^T D(\tilde u)(z) \vec n = 0.
\end{split}
\end{equation}

\begin{lemma}\label{weak_kil}
Let $u\in W^{1,2}(S,\mathbf{R}^n)$ be a tangent vector field such that $D(u)=0$ almost
everywhere on $S$. Then $u\in\mathcal{I}(S)$.
\end{lemma}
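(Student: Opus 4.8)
The statement to prove is Lemma~\ref{weak_kil}: a tangent vector field $u\in W^{1,2}(S,\mathbf{R}^n)$ with $D(u)=0$ a.e.\ on $S$ is automatically smooth, and hence lies in $\mathcal{I}(S)$. The plan is to reduce the intrinsic surface problem to the standard Korn inequality on a full-dimensional neighbourhood in $\mathbf{R}^n$, using the explicit extension $\tilde u$ defined in \eqref{def}, and then invoke elliptic regularity. Concretely, I would use formulas \eqref{ext_D_1} and \eqref{ext_D_2} to control the full symmetric gradient $D(\tilde u)$ on the collar $\tilde S = S^{h_0}$: the tangential-tangential block of $D(\tilde u)(z)$ equals a congruence of $D(u)(\pi(z))=0$ plus a term $Z(t,x)\cdot u(x)$ bounded by $C|u|$; the normal-tangential block is given explicitly in terms of $u$ and $\Pi$, again bounded by $C|u|$; and the normal-normal block vanishes. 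Hence $\|D(\tilde u)\|_{L^2(\tilde S)}\leq C\|u\|_{L^2(S)}<\infty$, so in particular $D(\tilde u)\in L^2(\tilde S,M^{n\times n})$.

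By Theorem~\ref{korn_standard}(i), applied on $\tilde S$ (a bounded Lipschitz — indeed smooth — domain), it follows that $\tilde u\in W^{1,2}(\tilde S,\mathbf{R}^n)$. Now I would bootstrap. Write the distributional identity \eqref{useful_formula}: $\Delta \tilde u = 2\,\mathrm{div}\{D(\tilde u) - \tfrac12(\mathrm{tr}\,D(\tilde u))\,\mathrm{Id}\}$. From the formulas above, the components of $D(\tilde u)$ are, up to the factor $(\mathrm{Id}+t\Pi)^{-1}$-congruences and smooth coefficients, pointwise expressions of the form (smooth) times $u(x) = u(\pi(z))$; in particular $D(\tilde u)$ depends on $z$ only through $\pi(z)$ composed with smooth functions of $t$, and is built \emph{algebraically} from $u$ (no derivatives). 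Thus $\Delta \tilde u \in W^{-1,2}(\tilde S)$ with the right-hand side controlled by $\|\nabla(u\circ\pi)\|_{L^2}$, which we now know is finite since $u\in W^{1,2}(S)$. So $\tilde u$ solves $\Delta \tilde u = \mathrm{div}\,F$ with $F \in L^2$, and more: $F$ is an algebraic (hence as-smooth-as-$u$) function of $u$. Iterating elliptic regularity for the Laplacian (interior estimates suffice, since we only care about smoothness near $S$, i.e.\ near $t=0$ which is in the interior of $\tilde S$): if $u\in W^{k,2}(S)$ then $F\in W^{k,2}$, so $\tilde u\in W^{k+1,2}_{loc}$, hence $u = \tilde u|_{S}\in W^{k+1,2}(S)$ by the trace/restriction theorem on the smooth submanifold $S$. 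Bootstrapping gives $u\in \bigcap_k W^{k,2}(S) = \mathcal{C}^\infty(S)$.

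Once $u$ is smooth, $D(u)=0$ everywhere by continuity, which is precisely condition (ii) in the definition of $\mathcal{I}(S)$ (condition (i) holds by hypothesis), so $u\in\mathcal{I}(S)$. The main obstacle I anticipate is the careful formulation of the regularity bootstrap: one must check that the extension $\tilde u$ really does gain a full derivative at each step, i.e.\ that the right-hand side $\mathrm{div}\,F$ with $F=F(u)$ built algebraically from $u$ and smooth $t$-dependent coefficients lies in $W^{k,2}_{loc}(\tilde S)$ whenever $u\in W^{k,2}(S)$ — this is where the precise structure of \eqref{ext_D_1}–\eqref{ext_D_2} (no derivatives of $u$ appearing in $D(\tilde u)$) is essential, because it is exactly the fact that $D(u)=0$ that removes the would-be first-derivative term and leaves only the zeroth-order contributions. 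A cleaner alternative, avoiding the div-form subtlety, is to observe directly from \eqref{ext_D_1}–\eqref{ext_D_2} that all second-order combinations $\partial_i\partial_j \tilde u$ can be recovered from first derivatives of the entries of $D(\tilde u)$ (the standard identity $\partial_k\partial_j u_i = \partial_j(D(u))_{ik} + \partial_k(D(u))_{ij} - \partial_i(D(u))_{jk}$), so $u\in W^{k,2}(S)\Rightarrow D(\tilde u)\in W^{k,2}_{loc}\Rightarrow \tilde u\in W^{k+1,2}_{loc}$; I would present the argument in this form, as it makes the gain of one derivative transparent and purely algebraic.
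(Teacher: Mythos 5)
Your proposal is correct and follows essentially the same route as the paper: extend $u$ to $\tilde u$ on the collar via \eqref{def}, use \eqref{ext_D_1}--\eqref{ext_D_2} together with $D(u)=0$ to see that $D(\tilde u)$ is a purely zeroth-order (algebraic) expression in $u$, conclude from \eqref{useful_formula} that $\Delta\tilde u$ has the needed integrability, and bootstrap by elliptic regularity. The only imprecision is the assertion that restricting a $W^{k+1,2}$ function to $S$ gives $W^{k+1,2}(S)$ (the trace theorem only gives $W^{k+1/2,2}(S)$), but this does not break the bootstrap, which still gains regularity at every step; alternatively one recovers $u$ with no loss by averaging $(\mathrm{Id}+t\Pi(x))\,\tilde u(x+t\vec n(x))$ in $t$.
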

\begin{proof}
We only need to prove that $u$ is smooth. Consider the 
extension $\tilde{u}\in W^{1,2}(\tilde S, \mathbf{R}^n)$ as above.
By (\ref{ext_D_1}), (\ref{ext_D_2}) and the formula (\ref{useful_formula}) we see that
$D(\tilde u)\in W^{1,2}(\tilde S)$ and hence: 
$$\Delta\tilde u \in L^2(\tilde S).$$
The result follows now by the elliptic regularity and a bootstrap argument.
\end{proof}

\begin{theorem}\label{th_surface_korn}
For every tangent vector field $u\in W^{1,2}(S,\mathbf{R}^n)$ there exists $v\in\mathcal{I}(S)$
such that:
$$\|u - v\|_{W^{1,2}(S)} \leq C_S \|D(u)\|_{L^2(S)}$$
and the constant $C_S$ depends only on the surface $S$.
\end{theorem}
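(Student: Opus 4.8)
The plan is to prove this via a standard compactness--contradiction argument, using the fact that the space $\mathcal{I}(S)$ of Killing fields is finite dimensional (which we take for granted from Appendix C / \cite{KN, Peterson}). The statement is the natural ``Poincar\'e-completed'' Korn inequality on $S$, entirely analogous to Theorem~\ref{korn_standard}(ii) in the flat case, with $so(n)$ replaced by $\mathcal{I}(S)$.

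First I would establish the \emph{qualitative} Korn inequality on $S$: there is a constant $C_S$ such that for every tangent field $u\in W^{1,2}(S,\mathbf{R}^n)$ one has $\|\nabla u\|_{L^2(S)}\le C_S(\|u\|_{L^2(S)}+\|D(u)\|_{L^2(S)})$. One way is to use the extension $\tilde u$ defined in (\ref{def}) on the tubular neighbourhood $\tilde S=S^{h_0}$: by (\ref{ext_D_1}) and (\ref{ext_D_2}) we have the pointwise bound $|D(\tilde u)(z)|\le C(|D(u)(\pi z)|+|u(\pi z)|)$, so applying the flat Korn inequality (Theorem~\ref{korn_standard}(i)) on the open set $\tilde S\subset\mathbf{R}^n$ gives $\|\tilde u\|_{W^{1,2}(\tilde S)}\le C(\|u\|_{L^2(S)}+\|D(u)\|_{L^2(S)})$; since the tangential gradient of $u$ on $S$ is controlled by $\nabla\tilde u$ restricted to $S$ (the normal derivative and the shape-operator corrections being lower order), this yields the surface Korn inequality. (Alternatively one invokes \cite{JC} directly, as the text notes.)

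Next, the passage to the \emph{sharp} inequality with the Killing fields subtracted is the compactness step. Suppose the claim fails: then there is a sequence $u_k\in W^{1,2}(S,\mathbf{R}^n)$ of tangent fields with $\mathrm{dist}_{W^{1,2}(S)}(u_k,\mathcal{I}(S))=1$ but $\|D(u_k)\|_{L^2(S)}\to 0$. After subtracting the $L^2$-orthogonal projection of $u_k$ onto $\mathcal{I}(S)$ (which is harmless since $\mathcal{I}(S)$ is finite dimensional, hence closed, and $D$ annihilates it), we may assume $u_k\perp\mathcal{I}(S)$ in $L^2(S)$ and still $\|u_k\|_{W^{1,2}(S)}\ge 1$ bounded below while $D(u_k)\to 0$. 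By the qualitative Korn inequality just proved, $\|u_k\|_{W^{1,2}(S)}$ is \emph{bounded above} too (once we know $\|u_k\|_{L^2}$ is bounded, which follows by normalising: rescale so $\|u_k\|_{W^{1,2}(S)}=1$). Hence a subsequence converges weakly in $W^{1,2}(S)$ and strongly in $L^2(S)$ to some tangent field $u_\infty$, with $u_\infty\perp\mathcal{I}(S)$. Lower semicontinuity of the $L^2$ norm of $D(\cdot)$ under weak $W^{1,2}$ convergence gives $D(u_\infty)=0$ a.e.\ on $S$, so by Lemma~\ref{weak_kil} $u_\infty\in\mathcal{I}(S)$; combined with $u_\infty\perp\mathcal{I}(S)$ this forces $u_\infty=0$. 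Then, again by the qualitative Korn inequality, $\|u_k\|_{W^{1,2}(S)}\le C(\|u_k\|_{L^2(S)}+\|D(u_k)\|_{L^2(S)})\to \|u_\infty\|_{L^2}=0$, contradicting $\|u_k\|_{W^{1,2}(S)}=1$.

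The main obstacle is really the first step --- obtaining the qualitative surface Korn inequality with constants depending only on $S$ --- since everything else is a routine Rellich-type compactness argument. That step rests essentially on Lemma~\ref{weak_kil} (to identify the limit) together with a clean reduction of the surface inequality to the Euclidean one via the extension (\ref{def}); the pointwise estimates (\ref{ext_D_1})--(\ref{ext_D_2}), already recorded in the excerpt, are exactly what is needed to carry out that reduction, with the term $Z(t,x)\cdot u(x)$ and the $\vec n D(\tilde u)\tau$ components absorbed into the $\|u\|_{L^2(S)}$ on the right-hand side. I would also remark that the bound extends from tangent fields to the constants $C_S$ being genuinely geometric (uniform over the fixed compact $S$), which is automatic here since $S$ is fixed throughout.
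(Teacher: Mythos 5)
Your proposal is correct and follows essentially the same route as the paper: reduce to fields orthogonal to the finite-dimensional space $\mathcal{I}(S)$ (harmless since $D$ annihilates Killing fields), argue by compactness/contradiction using weak $W^{1,2}$ convergence, identify the limit as a Killing field via Lemma~\ref{weak_kil}, and close the argument with the qualitative surface Korn inequality obtained from Theorem~\ref{korn_standard}(i) applied to the extension (\ref{def}) through the estimates (\ref{ext_D_1})--(\ref{ext_D_2}). The only difference is presentational (you state the qualitative inequality up front, the paper invokes it at the final step), so no further comparison is needed.
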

\begin{proof}
Since $\mathcal{I}(S)$ is a finitely dimensional subspace of the Banach space $E$
of all $W^{1,2}(S)$
tangent vector fields, its orthogonal complement $\mathcal{I}(S)^\perp$ is a closed subspace 
of $E$. We will prove that:
\begin{equation}\label{surf_uno}
\|u \|_{W^{1,2}(S)} \leq C_S \|D(u)\|_{L^2(S)} 
\qquad \forall w\in \mathcal{I}(S)^\perp
\end{equation}
which implies the Theorem.

If (\ref{surf_uno}) was not true, there would be a sequence $u_n\in \mathcal{I}(S)^\perp$ such that:
$$\|u_n \|_{W^{1,2}(S)} =1, \qquad \|D(u_n)\|_{L^2(S)}\longrightarrow 0 
\quad \mbox{ as } n\longrightarrow \infty$$
Without loss of generality, $u_n$ converge  weakly in $W^{1,2}(S)$ to some $u\in\mathcal{I}(S)^\perp$.
Moreover $D(u)=0$ by the second condition above, so by Lemma \ref{weak_kil} we obtain that
$u\in\mathcal{I}(S)$.

As the spaces $\mathcal{I}(S)$ and $\mathcal{I}(S)^\perp$ are orthogonal, there must be $u=0$, and
hence the sequence   $u_n$ converges to $0$ (strongly) in $L^2(S)$.
This contradicts $\|u_n \|_{W^{1,2}(S)} =1$, because:   
$$\|u_n \|_{W^{1,2}(S)}\leq C_S \left( \|u_n\|_{L^2(S)} 
+ \|D(u_n)\|_{L^2(S)}\right).$$
The last inequality results from Theorem \ref{korn_standard} (i) 
applied to the extensions
$\tilde u_n\in W^{1,2}(\tilde S)$ as in (\ref{def}). 
Indeed, by (\ref{ext_D_1}) and (\ref{ext_D_2}) it follows that:
\begin{equation*}
\begin{split}
 \|\tilde u_n\|_{L^2(\tilde S)} & \approx h_0^{1/2} \|u_n\|_{L^2(S)},\\
 \|\nabla u_n\|_{L^2(S)} & \leq C h_0^{-1/2} \|\tilde u_n\|_{W^{1,2}(\tilde S)},\\
 \|D(\tilde u_n)\|_{L^2(\tilde S)} & \leq C h_0^{1/2} \big(\|u_n\|_{L^2(S)} + \|D(u_n)\|_{L^2(S)}\big).
\end{split}
\end{equation*}
\end{proof}

\bigskip

We now  gather a few remarks relating to the fact 
that the linear space $\mathcal{I}(S)$ of all Killing fields on $S$
is of finite dimension. This is a classical result \cite{KN}, and it implies
that in $\mathcal{I}(S)$ all norms are equivalent. In particular, one has:
\begin{equation}\label{s_uno}
\forall u\in\mathcal{I}(S)\qquad \|\nabla u\|_{L^2(S)} 
\leq C_S \|u\|_{L^2(S)},
\end{equation}
for some constant $C_S$ depending only on the hypersurface $S$.

The bound (\ref{s_uno}), together with an estimate of $C_S$, can also be recovered
directly, using the following identity \cite{Peterson}, 
valid for Killing vector fields $u$:
\begin{equation}\label{s_due}
\Delta_S \left(\frac{1}{2} |u|^2\right) = \left|\widetilde\nabla u\right|^2
- \mbox{Ric } (u,u).
\end{equation}
Here $\Delta_S$ is the Laplace-Beltrami operator on $S$,
$\widetilde\nabla u = (\nabla u)_{tan}$ is the covariant derivative 
of $u$ on $S$, and $\mbox{Ric}$ stands for the Ricci curvature form on $S$.

To calculate $\mbox{Ric }(u,u)$ in our particular setting, notice that
by Gauss' Teorema Egregium (\cite{Spivak}, vol 3),  
the Riemann curvature $4$-tensor on $S$ satisfies:
\begin{equation*}
\forall x\in S\quad \forall \tau,\eta,\xi,\vartheta\in T_x S \qquad
R(\tau,\eta)\xi\cdot\vartheta  = (\Pi(x)\tau\cdot\vartheta) (\Pi(x)\eta\cdot\xi) 
- (\Pi(x)\tau\cdot\xi) (\Pi(x)\eta\cdot\vartheta).
\end{equation*}
Thus, seeing the Ricci curvature $2$-tensor as an appropriate trace of $R$, 
we obtain:
\begin{equation}\label{s_tre}
\begin{split}
\forall x\in S\quad \forall \eta,\xi \in T_x S \qquad
\mbox{Ric } (\eta,\xi) & = \mathrm{tr}~(\tau\mapsto R(\tau,\eta)\xi)\\
& = (\mathrm{tr}~\Pi(x)) \Pi(x)\eta\cdot\xi - \Pi(x)\xi\cdot\Pi(x)\eta\\
& =\left((\mathrm{tr}~\Pi(x))\Pi(x) - \Pi(x)^2\right)\eta\cdot\xi.
\end{split}
\end{equation}
Integrating (\ref{s_due}) on $S$ and using (\ref{s_tre}) we arrive at:
\begin{equation}\label{s_quattro}
\|\widetilde\nabla u\|_{L^2(S)}^2 = \int_S \Big((\mbox{tr }\Pi(x))\Pi(x) -
\Pi(x)^2\Big) u(x)\cdot u(x).
\end{equation}
Notice that in the special case of a $2\times 2$ matrix $\Pi$, that is
when $n=3$ and $S$ is a 2-d surface in $\mathbf{R}^3$, the Cayley-Hamilton
theorem implies:
$$(\mbox{tr }\Pi) \Pi - \Pi^2 =  (\mbox{det }\Pi)\cdot\mbox{Id},$$
and so:
$$\|\widetilde\nabla u\|_{L^2(S)}^2 
= \int_S \mbox{det }\Pi(x) |u|^2.$$
In this case $\mbox{det }\Pi(x)$ is the Gaussian curvature of $S$
at $x$ (see \cite{Peterson}).

To calculate the $L^2$ norm of the full gradient $\nabla u$ on $S$,
notice that:
\begin{equation*}
\|\nabla u\|_{L^2(S)}^2 - \|\widetilde\nabla u\|_{L^2(S)}^2
= \int_S \sum_{i=1}^{n-1}
\left|\vec n\cdot\frac{\partial}{\partial\tau_i} u\right|^2
= \int_S \sum_{i=1}^{n-1} \left|u\cdot \Pi(x)\tau_i\right|^2
= \int_S \left|\Pi(x) u\right|^2.
\end{equation*}
Hence we arrive at:
\begin{equation}\label{s_cinque}
\|\nabla u\|_{L^2(S)}^2 = \int_S (\mbox{tr }\Pi(x)) \Pi(x) u(x)
\cdot u(x),
\end{equation}
which clearly implies (\ref{s_uno}).

\begin{remark}
An equivalent way of obtaining the formula (\ref{s_cinque}), but without using
the language of Riemannian geometry, is to look at 'trivial' extension
of $u$:
$$w(x+t\vec n(x)) = u(x) \qquad \forall x\in S\quad \forall t\in (-h_0,h_0).$$
Since  $\partial_{\vec n} w = 0$ and $w\cdot \vec n = 0$ on the 
boundary of $\tilde S = S^{h_0}$, by (\ref{useful_formula}) one has:
\begin{equation}\label{C_sei}
\|\nabla w\|_{L^2(\tilde S)}^2  = -2 \int_{\tilde S} \mbox{div } D(w)\cdot w 
- \|\mbox{div } w\|_{L^2(\tilde S)}^2.
\end{equation}
Calculating $\int\mbox{div } D(w)\cdot w $ in terms of $\Pi(x)$, 
dividing both sides of (\ref{C_sei}) by 
$2h$ and passing to the limit with $h\longrightarrow 0$, one may recover 
(\ref{s_cinque}) directly.
\end{remark}

\begin{remark}
From the equivalence of the $L^2$ and the $W^{1,2}$ norms on $\mathcal{I}(S)$,
proved in (\ref{s_cinque}), it follows 
that the linear space $\mathcal{I} (S)$ is finitely dimensional.

For otherwise the space $(\mathcal{I}(S), \|\cdot\|_{W^{1,2}(S)})$ 
would have a countable
Hilbertian (orthonormal) base $\{e_i\}_{i=1}^\infty$ and thus necessarily
the sequence $\{e_i\}$ would converge to $0$, weakly in $W^{1,2}(S)$.
But this implies that $\lim_{h\to 0}\|e_i\|_{L^2(S)} = 0$, which
by the norms equivalence gives the same convergence in $W^{1,2}(S)$,
and a contradiction.
\end{remark}

\section{Appendix D - The uniform Poincar\'e inequality and the trace theorem
in thin domains}\label{uniform_Poincare_trace}

\begin{theorem}\label{th_uniform_poincare}
Assume {\bf (H1)} and let $h>0$ be sufficiently small. 
For every $u\in W^{1,2}(S^h, \mathbf{R})$
there exists a constant $a\in\mathbf{R}$ so that:
$$\|u - a\|_{L^2(S^h)}\leq C \|\nabla u \|_{L^2(S^h)}$$
and $C$ is independent of $h$, $a$ or $u$.
\end{theorem}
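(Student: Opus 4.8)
The plan is to reduce the uniform Poincar\'e inequality on the thin shell $S^h$ to the fixed‑domain estimate of Theorem~\ref{th_chen_li} by the same covering‑and‑mollification scheme used in the proof of Theorem~\ref{approx}. First I would cover $S$ by balls $D_{x_i,h}=B(x_i,h)\cap S$ with a covering number bounded independently of $h$, and set $B_{x_i,h}=\pi^{-1}(D_{x_i,h})\cap S^h$. Under {\bf (H1)} these cylinders, after rescaling by $h^{-1}$, are Lipschitz star‑shaped domains sitting between fixed balls $B_{r}\subset h^{-1}B_{x_i,h}\subset B_{R}$ with $r,R$ and the Lipschitz constant all controlled uniformly in $h$; hence Theorem~\ref{th_chen_li} applies to each of them after scaling, giving for each $i$ a constant $a_i=\fint_{B_{x_i,h}}u$ with
\begin{equation*}
\|u-a_i\|_{L^2(B_{x_i,h})}\leq C\,\mathrm{diam}(B_{x_i,h})\,\|\nabla u\|_{L^2(B_{x_i,h})}\leq Ch\,\|\nabla u\|_{L^2(B_{x_i,h})},
\end{equation*}
where $C$ is independent of $h$. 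This is the local ingredient.

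Next I would patch the local constants $a_i$ together. Whenever $B_{x_i,h}$ and $B_{x_j,h}$ overlap (which, by the geometry of the covering, happens precisely when the enlarged cylinders $2B_{x_i,h}$ contain $x_j$), the volume of the overlap is comparable to $|B_{x_i,h}|\sim h^n$, so
\begin{equation*}
|a_i-a_j|^2\leq \frac{C}{h^n}\Big(\|u-a_i\|_{L^2(B_{x_i,h}\cap B_{x_j,h})}^2+\|u-a_j\|_{L^2(B_{x_i,h}\cap B_{x_j,h})}^2\Big)\leq \frac{C}{h^{n-2}}\int_{2B_{x_i,h}}|\nabla u|^2.
\end{equation*}
Since $S$ is compact and connected, any two indices can be joined by a chain of overlapping cylinders of length bounded independently of $h$; chaining these estimates shows that all the $a_i$ differ from a common value, say $a:=a_1$, by at most $Ch^{1-n/2}\|\nabla u\|_{L^2(S^h)}$ — equivalently $\sum_i|a_i-a|^2\,h^n\leq Ch^2\|\nabla u\|_{L^2(S^h)}^2$. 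Combining this with the local bounds and summing over the finite, $h$‑independent covering,
\begin{equation*}
\|u-a\|_{L^2(S^h)}^2\leq C\sum_i\Big(\|u-a_i\|_{L^2(B_{x_i,h})}^2+|a_i-a|^2\,|B_{x_i,h}|\Big)\leq Ch^2\|\nabla u\|_{L^2(S^h)}^2\leq C\|\nabla u\|_{L^2(S^h)}^2,
\end{equation*}
which is the claimed inequality with $C$ independent of $h$, $a$ and $u$.

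The main obstacle is bookkeeping rather than a genuine conceptual difficulty: one must verify that the rescaled cylinders $h^{-1}B_{x_i,h}$ really are uniformly Lipschitz and uniformly star‑shaped (this uses $|\nabla g_i^h|\leq C_3h$ from {\bf (H1)}, which makes the top and bottom faces flat at scale $h$ up to slope $O(h)$), and that the chaining length between any two cylinders stays bounded as $h\to 0$ — both of which follow from compactness of $S$ and the bounded covering number, but deserve to be stated carefully. A small additional point is that Theorem~\ref{th_chen_li} is stated for scalar $W^{1,2}$ functions, so for vector‑valued $u$ one simply applies it componentwise, which changes nothing. I would also note in passing that the very same argument, with $u$ replaced by its components and the relevant extension, underlies the uniform trace theorems (Theorems~\ref{th_uniform_trace}) invoked elsewhere in the paper.
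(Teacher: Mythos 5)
Your local step is fine and matches the paper's (cylinders $B_{x_i,h}$ with bounded covering number, Theorem~\ref{th_chen_li} rescaled to get $\|u-a_i\|_{L^2(B_{x_i,h})}\leq Ch\|\nabla u\|_{L^2(B_{x_i,h})}$), but the global patching step has a genuine gap. The claim that any two cylinders can be joined by a chain of overlapping cylinders ``of length bounded independently of $h$'' is false: each cylinder has tangential diameter of order $h$ while $S$ has diameter of order $1$, so connecting chains must have length of order $1/h$, and the number of cylinders is $N\sim h^{1-n}$. With chains of length $\sim 1/h$ the naive chaining bound degrades, and in fact the intermediate estimate you assert, $\sum_i |a_i-a|^2 h^n\leq Ch^2\|\nabla u\|^2_{L^2(S^h)}$ (equivalently $\|u-a\|^2_{L^2(S^h)}\leq Ch^2\|\nabla u\|^2_{L^2(S^h)}$), is simply not true: take $u=\phi\circ\pi$ for a fixed nonconstant smooth $\phi$ on $S$; then the local averages $a_i\approx\phi(x_i)$ spread over an interval of length $O(1)$, so the left-hand side is of order $h$, while $h^2\|\nabla u\|^2_{L^2(S^h)}\sim h^3$. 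Only the $O(1)$ constant asserted in the theorem is attainable, and your argument does not actually establish it, because the chain-length claim on which the bookkeeping rests fails. A correct chaining proof is possible but requires the more delicate John/Boman-type weighting (exploiting how many chains pass through each cylinder), not bounded-length chains.

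The paper avoids chaining altogether: it defines a mollified local average $\tilde a(x)=\int_{S^h}\eta_x(z)u(z)\,\mathrm{d}z$, which is a smooth function on the fixed surface $S$, proves $\int_{S^h}|u-\tilde a\pi|^2\leq Ch^2\int_{S^h}|\nabla u|^2$ and $\int_S|\nabla\tilde a|^2\leq Ch^{-1}\int_{S^h}|\nabla u|^2$ exactly as in Theorem~\ref{approx}, and then applies the ordinary Poincar\'e inequality on the compact surface $S$ to $\tilde a$ with $a=\fint_S\tilde a$; the product $h\cdot h^{-1}$ in the final combination is what produces the uniform $O(1)$ constant. If you want to salvage your route, replace the bounded-chain claim by this transfer to $S$ (or by a genuine Boman-chain argument), since that is where the uniformity in $h$ actually comes from.
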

\begin{proof}
The argument is a combination of the proof of Theorem \ref{approx} 
and the Poincar\'e inequality on fixed surface $S$.
Let $D_{x,h}$, $B_{x,h}$, $\eta_x$ be as in the proof
of Theorem \ref{approx}. Define a smooth function 
$\tilde a:S\longrightarrow \mathbf{R}$:
$$\tilde a(x) =  \int_{S^h} \eta_x (z) u(z)~\mbox{d}z.$$
We will prove the Theorem for $a= \fint_S \tilde{a}(x)~\mbox{d}x.$

First, by Theorem \ref{app_fjm},  the local 
estimate (\ref{1.uno}) can be in our new setting replaced by:
$$\int_{B_{x,h}} |u - a_{x,h}|^2 \leq Ch^2  \int_{B_{x,h}} |\nabla u |^2,$$
with $C$, as usual, a uniform constant.
Repeating the calculations leading to (\ref{1.due}) and (\ref{1.tre}), we thus obtain:
\begin{eqnarray}
&&|\tilde{a}(x) - a_{x,h}|^2 \leq Ch^{2-n} \int_{B_{x,h}} |\nabla u |^2, \nonumber\\
&&|\nabla\tilde{a}(x')|^2 \leq Ch^{-n}  \int_{2B_{x,h}} |\nabla u |^2
\qquad \forall x'\in D_{x,h},\nonumber
\end{eqnarray}
which imply, exactly as in (\ref{1.cinque}):
\begin{equation*}
\int_{S^h} |u-\tilde{a}\pi|^2 \leq Ch^2 \int_{S^h} |\nabla u |^2, 
\qquad
\int_{S} |\nabla\tilde{a}|^2 \leq Ch^{-1} \int_{S^h} |\nabla u |^2. 
\end{equation*}
By the above inequalities and the standard Poincar\'e inequality on surfaces,
it follows:
\begin{equation*}
\begin{split}
\int_{S^h} |u - a|^2 & \leq C \left\{\int_{S^h} |u-\tilde{a}\pi|^2
+ h \int_S |\tilde{a}(x) - a|^2~\mbox{d}x \right\} \\ & \leq 
 C \left\{h^2 \int_{S^h} |\nabla u |^2 + h \int_S|\nabla\tilde{a}|^2 \right\}\leq 
C \int_{S^h}  |\nabla u |^2,
\end{split}
\end{equation*}
proving the result.
\end{proof}

\begin{remark}\label{th_chen_li}
Theorem \ref{th_uniform_poincare} provides a Poincar\'e inequality for sets $\Omega$ 
enjoying properties as in section 10. The following is a more general result.
Assume that $\Omega\subset\mathbf{R}^n$ is open, star-shaped with respect to
the origin and such that:
$$B_r\subset\Omega\subset B_R.$$
Then for every $u\in W^{1,2}(\Omega,\mathbf{R})$ there holds:
$$\left\|u - \fint_\Omega u\right\|_{L^2(\Omega)} \leq C_{n,R/r} R\cdot \|\nabla u\|_{L^2(\Omega)},$$
where the constant $C_{n,R/r}$ depends only on the upper bound of the quantities $n$ and $R/r$.

The proof follows from \cite{CL} where the first nonzero eigenvalue $\alpha_1$
of the Neumann problem for $-\Delta$ on $\Omega$ is estimated from below by 
$C_n\cdot\frac{r^n}{R^{n+2}}$, the constant $C_n$ depending on $n$ only.
Recalling that the best Poincar\'e constant equals to $\alpha_1^{-1/2}$, 
we obtain the result.
\end{remark}

\begin{theorem}\label{th_uniform_trace}
Assume {\bf (H1)}. For every $u\in W^{1,2}(S^h,\mathbf{R})$ there holds:
\begin{equation}\label{trace_uno}
\|u\|_{L^2(S)}\leq Ch^{-1/2} \|u\|_{L^{2}(S^h)} 
+ Ch^{1/2} \|\nabla u\|_{L^{2}(S^h)},
\end{equation}
\begin{equation}\label{trace_due}
\|u\|_{L^2(\partial S^h)}\leq  Ch^{-1/2} \|u\|_{L^{2}(S^h)} 
+ Ch^{1/2} \|\nabla u\|_{L^{2}(S^h)},
\end{equation}
where in the left hand side we have norms of traces of $u$ on $S$ and $\partial S^h$,
respectively. The constant $C$ is independent of $u$ or $h$.
\end{theorem}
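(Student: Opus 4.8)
The plan is to reduce the trace estimates on $S^h$ to a one-dimensional inequality along the normal fibers, in the spirit of the proof of Theorem~\ref{th_uniform_poincare}. For a fixed $x\in S$ and a function $\phi\in W^{1,2}\big((-g_1^h(x),g_2^h(x))\big)$ the elementary identity
$$\phi(s)^2 = \phi(t)^2 + 2\int_t^s \phi(\sigma)\phi'(\sigma)~\mathrm{d}\sigma$$
yields, after averaging in $t$ over the interval of length $\ell(x)=g_1^h(x)+g_2^h(x)$ and using {\bf (H1)} (so that $C_1 h\le \ell(x)\le 2C_2 h$), the bound
$$|\phi(s)|^2 \le \frac{C}{h}\int_{-g_1^h(x)}^{g_2^h(x)}|\phi|^2 + Ch\int_{-g_1^h(x)}^{g_2^h(x)}|\phi'|^2$$
valid for every $s$ in the fiber, in particular at $s=0$ (the point on $S$) and at the two endpoints $s=g_2^h(x)$, $s=-g_1^h(x)$ (the points on $\partial^\pm S^h$).

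First I would apply this with $\phi(t)=u(x+t\vec n(x))$, and then integrate the resulting pointwise estimate over $x\in S$. On the left we get $\|u\|_{L^2(S)}^2$ (for \eqref{trace_uno}) or, using the change of variables $z=x+t\vec n(x)$ whose Jacobian factor $\det(\mathrm{Id}+t\Pi(x))$ is bounded above and below uniformly for small $h$, we get $\|u\|_{L^2(\partial^\pm S^h)}^2$ up to a uniform constant (for \eqref{trace_due}); here one also uses that $|\vec n^h|=1$ and that the surface measure on $\partial^\pm S^h$ is comparable to the measure on $S$ via the maps $x\mapsto x\pm g_i^h(x)\vec n(x)$, whose differentials $\mathrm{Id}\pm g_i^h\Pi\pm \vec n\otimes\nabla g_i^h$ have uniformly bounded Jacobians by {\bf (H1)}. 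On the right, the two fiber integrals integrate (again via the bi-Lipschitz change of variables) to $\frac{C}{h}\|u\|_{L^2(S^h)}^2$ and $Ch\,\|\partial_{\vec n}u\|_{L^2(S^h)}^2\le Ch\,\|\nabla u\|_{L^2(S^h)}^2$ respectively. Taking square roots gives both \eqref{trace_uno} and \eqref{trace_due}.

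The only mild subtlety is bookkeeping the geometric factors: one must check that all the Jacobians of the fibration $S\times(-g_1^h,g_2^h)\to S^h$ and of the boundary parametrizations are bounded above and below by constants independent of $h$, which is exactly what {\bf (H1)} guarantees (the shape operator $\Pi$ is bounded on the compact $S$, $|t|\le C_2 h$ is small, and $|\nabla g_i^h|\le C_3 h$). This is where the hypothesis {\bf (H1)} enters essentially; without the lower bound $g_i^h\ge C_1 h$ the factor $1/\ell(x)$ would not be controlled by $C/h$, and without the upper bounds the fibration could degenerate. I expect this geometric bookkeeping to be the main (though routine) obstacle; the analytic content is just the one-dimensional trace inequality above.
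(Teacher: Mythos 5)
Your argument is correct: the fiberwise one-dimensional trace inequality, combined with the uniform Jacobian bounds for the map $(x,t)\mapsto x+t\vec n(x)$ and for the boundary parametrizations $x\mapsto x\pm g_i^h(x)\vec n(x)$ that {\bf (H1)} provides, does yield both \eqref{trace_uno} and \eqref{trace_due}; the only points to make explicit are that $\phi(t)=u(x+t\vec n(x))$ lies in $W^{1,2}$ of the fiber for a.e.\ $x\in S$ (Fubini, after approximating by smooth $u$) and that $|\phi'|\le|\nabla u|$. The paper proceeds differently: it reduces \eqref{trace_uno} to the case $g_1^h=g_2^h=Ch$, covers $S^h$ by the cylinders $B_{x,h}$ of size $h$ already used in Theorem~\ref{approx}, and applies the rescaled standard trace theorem on each cylinder, summing over a covering with bounded overlap; \eqref{trace_due} is then obtained by the same device on the collar $\{x-t\vec n^h(x);\ x\in\partial S^h,\ t\in(0,Ch)\}$. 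Your route is more self-contained and elementary --- it avoids invoking trace constants on rescaled Lipschitz cylinders and the covering argument, at the cost of the explicit Jacobian bookkeeping you describe (which is routine under {\bf (H1)}) --- and it is closer in spirit to the fiberwise fundamental-theorem-of-calculus arguments the paper uses elsewhere (e.g.\ in Lemma~\ref{lem1}); the paper's covering approach, by contrast, recycles machinery already set up for Theorem~\ref{approx} and localizes uniformly without tracking the fibration explicitly. Both proofs are valid and give the same scaling in $h$.
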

\begin{proof}
Since $|g_i^h(x)|\geq Ch$, (\ref{trace_uno}) will be implied by the same inequality 
for $S^h$ with $g_1^h = g_2^h = Ch$. The latter one can be obtained
covering $S^h$ with the cylinders $B_{x,h}$ of size $h$ and applying the scaled version 
of the usual trace theorem to $B_{x,h}$.

Notice, that the constant $C$ in (\ref{trace_uno}) depends only on $n$ 
and the Lipschitz
constant of $S$. Since $|\nabla g_i^h(x)|\leq Ch$ and $|g_i^h(x)|\geq Ch$
for each $s\in S$, we may use the same argument as before on
$\{x - t\vec n^h(x); ~~ x\in \partial S^h, ~ t\in(0,Ch)\}\subset S^h$ 
to prove (\ref{trace_due}).
\end{proof}

\end{document}